\DeclareSymbolFont{cyrletters}{OT2}{wncyr}{m}{n}
\DeclareMathSymbol{\Sha}{\mathalpha}{cyrletters}{"58}
\newtheorem{thm}{Theorem}[section]
\newtheorem{prop}[thm]{Proposition}
\newtheorem{cor}[thm]{Corollary}
\newtheorem{corollary}[thm]{Corollary}
\newtheorem{lem}[thm]{Lemma}
\newtheorem{remark}[thm]{Remark}
\newtheorem{remarks}[thm]{Remarks}
\theoremstyle{definition}
\newtheorem{defn}[thm]{Definition}
\newcommand{\IZ}{\text{${\mathbb{Z}}$}}
\newcommand{\IQ}{\text{${\mathbb{Q}}$}}
\DeclareMathOperator{\cok}{cok}
\DeclareMathOperator{\Ext}{Ext}
\DeclareMathOperator{\Fr}{Fr}
\DeclareMathOperator{\Gal}{Gal}
\DeclareMathOperator{\ind}{ind}
\DeclareMathOperator{\Hom}{Hom}
\DeclareMathOperator{\rk}{rk}
\DeclareMathOperator{\Sel}{Sel}
\DeclareMathOperator{\Tr}{Tr}
\newcommand{\CC}{\mathbb{C}}
\newcommand{\RR}{\mathbb{R}}
\newcommand{\ZZ}{\mathbb{Z}}
\newcommand{\bc}{\mathbb{C}}
\newcommand{\bq}{\mathbb{Q}}
\newcommand{\br}{\mathbb{R}}
\newcommand{\bz}{\mathbb{Z}}
\newcounter{condone}
\newcommand{\mpar}[1]{}
\newcommand{\lra}{\longrightarrow}
\newcommand{\Zp}{{\mathbb{Z}_p}}
\newcommand{\Qp}{{\mathbb{Q}_p}}
\newcommand{\ZpG}{\mathbb{Z}_p[G]}
\newcommand{\ZG}{\mathbb{Z}[G]}
\newcommand{\CpG}{\mathbb{C}_p[G]}
\newcommand{\QpG}{\mathbb{Q}_p[G]}
\newcommand{\Cp}{{\mathbb{C}_p}}
\newcommand{\Ze}{{\mathbb{Z}}}
\newcommand{\Ce}{{\mathbb{C}}}
\renewcommand{\Re}{{\mathbb{R}}}
\newcommand{\PJj}{P_{(J,j)}}
\newcommand{\PtJj}{P^t_{(J,j)}}
\newcommand{\Ptj}{P_{(t,j)}}
\newcommand{\Pttj}{P^t_{(t,j)}}
\newcommand{\Puk}{P_{(u,k)}}
\newcommand{\Qu}{\mathbb{Q}}
\newcommand{\QG}{\Qu[G]}
\newcommand{\calP}{\mathcal{P}}
\newcommand{\NT}{\mathrm{NT}}
\newcommand{\calL}{\mathcal{L}}
\newcommand{\calM}{\mathcal{M}}
\newcommand{\frp}{\mathfrak{p}}
\newcommand{\etncp}{{eTNC$_p$} }
\newcommand{\tensorZ}{\otimes_\Ze}
\newcommand{\tensorZp}{\otimes_\Zp}
\newcommand{\directlim}{{\lim\limits_{\longrightarrow}}}
\newcommand{\TpFA}{{T_{p, F}(A)}}
\newcommand{\Cf}{{ C_{A,F}^{f, \bullet} }}
\newcommand{\sseq}{\subseteq}
\newcommand{\trivchar}{{{\bf 1}_G}}
\newcommand{\nr}{{\mathrm{nr}}}
\newcommand{\EG}{{\mathbb{E}[G]}}
\newcommand{\RG}{{\mathbb{R}[G]}}
\renewcommand{\mod}{\mathrm{mod}\ }
\title{Congruences for critical values of
higher derivatives of twisted Hasse-Weil $L$-functions}
\author{Werner Bley and Daniel Macias Castillo}
\begin{document}

\maketitle

\begin{abstract}Let $A$ be an abelian variety over a number field $k$ and $F$ a finite cyclic extension of $k$ of $p$-power degree for an odd prime $p$.
Under certain technical hypotheses, we obtain a reinterpretation of the equivariant Tamagawa number conjecture (`eTNC') for $A$, $F/k$ and $p$ as an explicit family of
$p$-adic congruences involving values of derivatives of the Hasse-Weil $L$-functions of twists of $A$, normalised by completely explicit twisted regulators. This
reinterpretation makes the eTNC amenable to numerical verification and furthermore leads to explicit predictions which refine well-known conjectures of Mazur and Tate.
\end{abstract}

\section{Introduction}\label{intro}

Let $A$ be an abelian variety of dimension $d$ defined over a number field $k$. We write $A^t$ for the dual abelian variety.
Let $F/k$ be a finite Galois extension with group $G := \Gal(F/k)$. We let $A_F$ denote the base change of $A$ through $F/k$ and consider the motive
$M_F := h^1(A_F)(1)$ as a motive over $k$ with a natural action of the semi-simple $\Qu$-algebra $\QG$.

We will study the equivariant Tamagawa number conjecture as formulated by Burns and Flach in \cite{bufl01} for the pair
$(M_F, \ZG)$. This conjecture asserts the validity of an equality in the relative algebraic $K$-group $K_0(\ZG, \Re[G])$. If $p$
is a prime, we refer to the image of this equality in $K_0(\ZpG, \CpG)$ as the `eTNC$_p$ for $(M_F, \ZG)$' (here $\Cp$ denotes the completion of an algebraic closure of
$\Qp$). If $p$ does not
divide the order of $G$ then the ring $\ZpG$ is regular and one can use the techniques described in \cite[\S 1.7]{bufl96}
to give an explicit interpretation of this projection. In this manuscript we will focus on primes $p$ dividing the order of $G$, for which such an
interpretation is in general very difficult to obtain.

In \cite{bmw}, a close analysis of the finite
support cohomology of Bloch and Kato for the base change of the $p$-adic Tate module of $A^t$ is carried out
under certain technical hypotheses on $A$ and $F$.
A consequence of this analysis is an explicit reinterpretation of the \etncp in terms of a natural `equivariant regulator'
(see \cite[Th.~5.1]{bmw}). The main results of the present manuscript are based on the computation of this
equivariant regulator in the special case where $F/k$ is cyclic of degree $p^n$ for an odd prime $p$. Under certain
additional hypotheses on the structure of Tate-Shafarevich groups of $A$ over the intermediate fields of $F/k$
we obtain a completely explicit interpretation of the eTNC$_p$ (see Theorem \ref{ZpG theorem}). Whilst this is of independent theoretical
interest, it also makes the \etncp amenable to numerical verifications.

One of the main motivations behind our study of the equivariant Tamagawa number conjecture for the pair
$(M_F, \ZG)$ is the hope that this conjecture may provide a coherent overview of and a systematic approach to the study of properties of leading terms
and values at $s=1$ of Hasse-Weil $L$-functions. In order to describe our current steps in this direction, we first recall the general philosophy of
`refined conjectures of the Birch and Swinnerton-Dyer type' that originates in the work of Mazur and Tate in \cite{mazurtate}. These conjectures concern,
for elliptic curves $A$ defined over $\bq$ and certain abelian groups $G$, the properties of `modular elements' $\theta_{A,G}$ belonging a priori to the
rational group ring $\bq[G]$ and constructed from the modular symbols associated to $A$, therefore interpolating the values at $s=1$ of the twisted
Hasse-Weil $L$-functions associated to $A$ and $G$. More precisely, the aim is to predict the precise
power $r$ (possibly infinite) of the augmentation ideal $I$ of the integral group ring $\bz[G]$ with the property that $\theta_{A,G}$ belongs to $I^r$
but not to $I^{r+1}$, and furthermore to describe the image of $\theta_{A,G}$ in the quotient $I^r/I^{r+1}$ (whenever such an integer $r$ exists).
In the process of studying the modular element $\theta_{A,G}$, Mazur and Tate also predict that it should belong to the Fitting ideal over $\bz[G]$ of
their `integral Selmer group' $S(A/F)$ (and refer to such a statement as a `weak main conjecture') and ask for a `strong main conjecture'
predicting a generator of the Fitting ideal of an explicitly described natural modification of $S(A/F)$
(see \cite[Remark after Conj. 3]{mazurtate}).


However, it is well-known that in many cases of interest the modular element
$\theta_{A,G}$ vanishes, thus rendering any such properties trivial, and it would therefore be desirable to carry out an analogous study for
elements interpolating leading terms rather than values at $s=1$ of the relevant Hasse-Weil $L$-functions, normalised by appropriate regulators.
Although the aim to study such elements already underlies the results of \cite{bmw}, one of the main advantages of confining ourselves to the special case in which
the given extension of number fields $F/k$ is cyclic of prime-power degree is that we are led to defining completely explicit `twisted regulators' from our computation
of the aforementioned equivariant regulator of \cite{bmw}. Furthermore, we arrive at very 
explicit statements without having to restrict ourselves to situations in which
the relevant Mordell-Weil groups are projective when considered as Galois modules. In particular, we derive predictions of the following nature for such an element
$\calL$ that interpolates leading terms at $s=1$ of twisted Hasse-Weil $L$-functions normalised by our twisted regulators from the assumed validity of the 
eTNC$_p$ for $(M_F, \ZG)$:\begin{itemize}\item a formula for the precise power
$h\in\bz_{\geq 0}$ of the augmentation ideal $I_{G,p}$ of the integral group ring $\bz_p[G]$ with the property that $\calL$ belongs to $I_{G,p}^h$ but not
to $I_{G,p}^{h+1}$ (expressed in terms of the ranks of the Mordell-Weil groups of $A$ over the intermediate fields of $F/k$),
and a formula for the image of $\calL$ in the quotient $I_{G,p}^h/I_{G,p}^{h+1}$ (see Corollary \ref{Cor 1});
\item the statement that the element
$\calL$ of $\bz_p[G]$ (resp. a straightforward modification of $\calL$) annihilates the $p$-primary Tate-Shafarevich group of $A^t$ (resp. $A$) over $F$
as a Galois module (see Theorem \ref{Ann theorem} and Corollary \ref{Cor 2}); \item  and the explicit description of a natural quotient of (the Pontryagin dual
of) the $p$-primary Selmer group of $A$ over $F$ whose Fitting ideal is generated by $\calL$ (see Theorem \ref{Ann theorem}).\end{itemize}

The structure of the paper is as follows. In Section \ref{statements} we present our main results and in Section \ref{Proofs} we supply
the proofs.  In order to prepare for the proofs  we recall in Section \ref{bmw reformulation} the relevant material from \cite{bmw}.
In the final Section \ref{examples} we present some numerical computations.

We would like to thank David Burns and Christian Wuthrich for some helpful discussions concerning this project, and the referee for making several useful suggestions.

\subsection{Notations and setting}

We mostly adapt the notations from \cite{bmw}.


For a finite group $\Gamma$ we write $D(\Zp[\Gamma])$ for the derived category of complexes of left $\Zp[\Gamma]$-modules.
We also write $D^p(\Zp[\Gamma])$ for the full triangulated subcategory of $D(\Zp[\Gamma])$ comprising complexes that are
perfect (that is, isomorphic in $D(\Zp[\Gamma])$ to a bounded complex of finitely generated projective $\Zp[\Gamma]$-modules).

We also write $\hat{\Gamma}$ for the set of irreducible $E$-valued characters of $\Gamma$, where
$E$ denotes either $\Ce$ or $\Cp$ (we will throughout our arguments have fixed an isomorphism of fields $j:\Ce\to\Cp$ and use it to implicitly identify both sets,
with the intended meaning of $\hat{\Gamma}$ always clear from the context). We let
${\bf{1}}_\Gamma$ denote the trivial character of $\Gamma$ and write $\check\psi$ for the contragrediant character of each $\psi \in \hat{\Gamma}$.
We write
\[
e_\psi = \frac{\psi(1)}{|\Gamma|} \sum_{\gamma \in \Gamma} \psi(\gamma) \gamma^{-1}
\]
for the idempotent associated with $\psi \in \hat{\Gamma}$ and also set ${\rm Tr}_\Gamma:=\sum_{\gamma \in \Gamma} \gamma$.

For any abelian group $M$ we let $M_{{\rm tor}}$ denote its torsion subgroup and $M_{{\rm tf}}$ the torsion-free quotient  $M / M_{{\rm tor}}$.
We also set $M_p := \Zp \tensorZ M$ and, if $M$ is finitely generated, we
set $\rk(M) := \dim_\Qu(\Qu \tensorZ M)$.

For any $\Zp[\Gamma]$-module $M$ we write $M^\vee$ for the Pontryagin dual $\Hom_\Zp(M, \Qp/\Zp)$ and $M^*$ for the linear dual
$\Hom_\Zp(M, \Zp)$, each endowed with the natural contragredient action of $\Gamma$. Explicitly, for a homomorphism $f$ and elements $m \in M$ and $\gamma \in \Gamma$,
one has $(\gamma f)(m) = f(\gamma^{-1} m)$.

For any Galois extension of fields $L/K$ we abbreviate $\Gal(L/K)$ to $G_{L/K}$. We fix an algebraic closure $K^c$ of $K$ and
abbreviate $G_{K^c/K}$ to $G_K$. For each non-archimedian place $v$ of a number field we write $\kappa_v$ for the residue field.

Throughout
this paper, we will consider the following situation. We have fixed an odd prime $p$ and a Galois extension $F/k$ of number fields with
group $G = G_{F/k}$. Except in Section \ref{bmw reformulation}, the extension $F/k$ will always be cyclic of degree $p^n$. We give ourselves an abelian variety $A$
of dimension $d$ defined over $k$. For each intermediate field $L$ of $F/k$ we write $S_p^L, S_r^L$ and $S_b^L$ for the set of non-archimedean
places of $L$ which are $p$-adic, which ramify in $F/L$ and at which $A/L$ has bad reduction respectively. Similarly, we write
$S_\infty^L,S_\br^L$ and $S_\bc^L$ for the sets of archimedean, real and complex places of $L$ respectively. If $L=k$ we simply write
$S_p, S_r,S_b,S_\infty,S_\br$ and $S_\bc$.

Finally, we write $A(L)$ for the Mordell-Weil group and $\Sha_p(A_L)$ for the $p$-primary Tate-Shafarevich group of $A$ over $L$.

\mpar{statements}
\section{Statement of the main results}\label{statements}

Recall that $A$ is an abelian variety of dimension $d$ defined over the number field $k$. Furthermore, $F/k$ is cyclic of
degree $p^n$ where $p$ is an odd prime.

We assume throughout this section that $A/k$ and $F/k$ are such that

\begin{itemize}
\item [(a)] $p \nmid |A(k)_{{\rm tor}}| \cdot  |A^t(k)_{{\rm tor}}|$,
\item [(b)]  $p\nmid\prod_{v\in S_{{\rm b}}}c_v(A,k)$, where $c_v(A,k)$ denotes the Tamagawa number of $A$ at $v$,
\item [(c)] $A$ has good reduction at all $p$-adic places of $k$,
\item [(d)] $p$ is unramified in $F / \Qu$,
\item [(e)] No place of bad reduction for $A$ is ramified in $F/k$, i.e. $S_b \cap S_r = \emptyset$,
\item [(f)] $p\nmid\prod_{v\in S_{{\rm r}}}|A(\kappa_v)|$,
\item [(g)] The Tate-Shafarevich group $\Sha(A_{F})$ is finite,
\item [(h)] $\Sha_p(A_{F^H}) = 0$ for all non-trivial subgroups $H$ of $G$.
\end{itemize}

\begin{remarks}
Our assumptions (a) - (g) recover the hypotheses (a) - (h) of \cite{bmw}. For a fixed abelian variety $A/k$, the hypotheses (a), (b)
and (c) clearly exclude only finitely many choices of odd prime $p$, while the additional hypotheses (d), (e) and (f) constitute a mild restriction on the choice of cyclic
field extension $F$ of $k$ of odd, prime-power degree. In order to further illustrate this point, we let $S$ denote any finite set of places of $k$ at which $A$ has good
reduction. We then define a set $\Sigma(S)$ of rational primes as the union of the set of all prime divisors $\ell$ of
\[
|A(k)_{{\rm tor}}| \cdot  |A^t(k)_{{\rm tor}}|\cdot \prod_{v\in S_{{\rm b}}}c_v(A,k)\cdot \prod_{v\in S}|A(\kappa_v)|
\] 
and the set of all primes $\ell$ with the property
that $A$ has bad reduction at an $\ell$-adic place of $k$. The set
$\Sigma(S)$ is then clearly finite and, for any odd prime $p \not\in \Sigma(S)$ and any cyclic field extension $F$ of $k$ of $p$-power degree which is unramified
outside $S$ and with the property that $p$ is unramified in $F/\Qu$, all of the hypotheses (a)-(f) are satisfied.

The hypothesis (g) is famously
conjectured to be true in all cases, and it is straightforward to produce specific 
examples for which all of the other hypotheses, including the additional hypothesis (h), are
satisfied (see also Section \ref{examples} and Remark \ref{shaorpermutation} in this regard). We emphasize that in (h) we allow $\Sha_p(A_F)$ to be non-trivial.
\end{remarks}

An understanding of the $G$-module structure of the relevant Mordell-Weil groups is key to our approach. We hence begin by applying a result of Yakovlev \cite{yakovlev}
in order to obtain such explicit descriptions. This approach is inspired by work of Burns, who first obtained a similar result in \cite[Prop. 7.2.6(i)]{leading}.
For a non-negative integer $m$ and a $\ZpG$-module $M$ we write $M^{<m>}$ for the direct sum of $m$
copies of $M$. Furthermore, we set $[m] := \{ 1, \ldots, m \}$.

\begin{prop}\label{MW iso}\mpar{MW iso}
 There exist isomorphisms of $\IZ_p[G]$-modules of the form
\[
A(F)_p \cong\bigoplus_{J\leq G}\IZ_p[G/J]^{<m_J>}\cong A^t(F)_p,
\]
for a set of non-negative integers $\{m_J:J\leq G\}$.
\end{prop}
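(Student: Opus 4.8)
The plan is to deduce both isomorphisms from Yakovlev's classification of $\ZpG$-modules of a certain type, applied to the Galois cohomology of the $p$-adic Tate modules of $A$ and $A^t$. Since $G$ is cyclic of order $p^n$, its subgroups form a chain $1 = G_0 < G_1 < \cdots < G_n = G$, and Yakovlev's theorem (for cyclic $p$-groups this goes back to the structure theory recalled e.g. in Burns' work \cite{leading}) asserts that a finitely generated $\ZpG$-module $M$ which is cohomologically trivial, or more precisely whose Tate cohomology over all subquotients is controlled by a suitable system of maps, is determined up to isomorphism by the invariants $\hat H^i(G_j/G_i, M^{G_i})$; under our running hypotheses these will force $M$ to be a direct sum of modules of the form $\ZpG/J$ with multiplicities $m_J$. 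So the first step is to verify that $A(F)_p$ (equivalently $A^t(F)_p$) satisfies the hypotheses of Yakovlev's theorem.

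The key input making this work is hypothesis (h): $\Sha_p(A_{F^H}) = 0$ for all nontrivial $H \leq G$, together with (a) (so that $A(k)_{\mathrm{tor}}$, and by a standard argument $A(L)_{\mathrm{tor}}$ for each intermediate $L$, has no $p$-part) and (c)--(f) (which guarantee the relevant local and descent cohomology is $p$-torsion-free or vanishes). These hypotheses are precisely the ones under which \cite{bmw} analyses the finite-support cohomology complex $C_{A,F}^{f,\bullet}$; the point is that, under (a)--(h), this complex is acyclic outside one degree and its cohomology there is $A^t(F)_p$ (resp.\ $A(F)_p$), and it is a perfect complex of $\ZpG$-modules. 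From the Mordell-Weil theorem and hypothesis (g), $A(F)_p$ is a finitely generated $\Zp$-module, and (a) forces it to be $\Zp$-free; then the vanishing of the higher Tate cohomology groups $\hat H^i(H, A(F)_p)$ for $H \leq G$ — which one reads off from (h) via the $H$-descent exact sequence relating $A(F^H)_p$, $A(F)_p^H$ and $\Sha_p$ — is exactly what is needed to feed into Yakovlev's machine and conclude that $A(F)_p$ is a direct sum of the permutation modules $\Zp[G/J]$.

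For the second isomorphism $A(F)_p \cong A^t(F)_p$, I would argue that the same list of invariants $\{m_J\}$ is computed for both: the Weil pairing induces a $G$-equivariant perfect duality between the relevant cohomology complexes for $A$ and for $A^t$ (this is the compatibility already exploited in \cite{bmw}), so the two $\ZpG$-modules have identical Tate cohomology over every subquotient of $G$, hence are isomorphic by the uniqueness part of Yakovlev's theorem. Concretely, the ranks satisfy $\rk_{\Zp} A(F^H)_p = \rk_{\Zp} A^t(F^H)_p$ for all $H$ (equality of Mordell-Weil ranks over each intermediate field), and these numbers determine the $m_J$ via Möbius inversion along the subgroup chain.

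The main obstacle is the careful bookkeeping in the second step: showing that the descent hypothesis (h) genuinely translates into the vanishing/freeness statements that Yakovlev's theorem requires as hypotheses, uniformly over all subgroups $H$, and correctly identifying the cohomology of the finite-support complex of \cite{bmw} with $A^t(F)_p$ in the single relevant degree. Once that identification and the cohomological triviality of the appropriate subquotients are in hand, the invocation of Yakovlev \cite{yakovlev} (in the form used by Burns in \cite[Prop.~7.2.6(i)]{leading}) is formal.
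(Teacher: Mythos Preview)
Your overall strategy---apply Yakovlev's theorem once the requisite Tate cohomology vanishing is established---is the same as the paper's, but the key mechanism you invoke is misdescribed in a way that leaves a genuine gap.

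The complex $C_{A,F}^{f,\bullet}$ is \emph{not} acyclic outside one degree: under hypotheses (a)--(g) it is acyclic outside degrees $1$ and $2$, with $H^1\cong A^t(F)_p$ and $H^2\cong \Sel_p(A_F)^\vee$ (so $H^2_{\rm tf}\cong A(F)_p^*$). This two-degree structure is precisely what the paper exploits: perfectness of the complex yields a degree shift $\hat H^i(J,H^1)\cong \hat H^{i-2}(J,H^2)$ for every subgroup $J$. Hypothesis (h), via \cite{bmwselmer}, forces $(H^2)_J$ to be torsion-free and hence $\hat H^{-1}(J,H^2)=0$; the shift then gives $\hat H^{1}(J,H^1)=0$, and periodicity (cyclic $G$) gives $\hat H^{-1}(J,H^1)=0$ as well. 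This is what feeds into Yakovlev. Your proposed replacement---``the $H$-descent exact sequence relating $A(F^H)_p$, $A(F)_p^H$ and $\Sha_p$''---does not by itself produce the vanishing of $\hat H^{\pm 1}(J,A^t(F)_p)$; the descent/Hochschild--Serre sequences relate these groups to kernels of maps between Selmer groups, not directly to $\Sha_p(A_{F^J})$, so hypothesis (h) alone does not close the loop without the perfect-complex dimension shift (or an equivalent argument you have not supplied).

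Two smaller points. First, the paper applies Yakovlev simultaneously to $H^1=A^t(F)_p$ and to $H^2_{\rm tf}=A(F)_p^*$, then dualises the latter at the end to get $A(F)_p$; you should work with $A(F)_p^*$ rather than $A(F)_p$ directly. Second, the matching of the multiplicities $m_J$ is obtained from the N\'eron--Tate height pairing giving a $\Cp[G]$-isomorphism $\Cp\otimes A^t(F)_p\cong \Cp\otimes A(F)_p^*$ (hence equality of $\psi$-ranks for every character $\psi$), not from the Weil pairing on Tate modules.
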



Proposition \ref{MW iso} has the following immediate consequence for the ranks of the relevant Mordell-Weil groups.

\begin{corollary}\label{ranks}
  For any subgroup $H$ of $G$ we have
\begin{eqnarray*}
&& {\rm rk}(A(F^H))={\rm rk}(A^t(F^H))= \nonumber \\
&=& \sum_{J>H}|G/J|m_J+|G/H|\sum_{J\leq H}m_J\leq|G/H|{\rm rk}(A(k)).
\end{eqnarray*}
\end{corollary}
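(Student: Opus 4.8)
\medskip

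\noindent\emph{Proof strategy.}
The plan is to deduce the corollary purely by taking $H$-invariants in the isomorphisms of Proposition \ref{MW iso} and then counting orbits. First I would record that, for any subgroup $H\leq G$, one has $A(F^H)=A(F)^H$ by Galois descent, and hence — since $\Qp\tensorZ-$ is flat and $A(F)$ is finitely generated — that
\[
\Qp\tensorZ A(F^H)=\bigl(\Qp\tensorZ A(F)\bigr)^H=\bigl(\Qp\tensorZp A(F)_p\bigr)^H ,
\]
so that $\rk(A(F^H))=\dim_{\Qp}\bigl(\Qp\tensorZp A(F)_p\bigr)^H$, and likewise with $A^t$ in place of $A$. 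Since Proposition \ref{MW iso} provides an isomorphism $A(F)_p\cong A^t(F)_p$ of $\ZpG$-modules, the first equality $\rk(A(F^H))=\rk(A^t(F^H))$ is then immediate, and it remains to evaluate this common dimension by means of the explicit decomposition $A(F)_p\cong\bigoplus_{J\leq G}\Zp[G/J]^{<m_J>}$.

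Next I would compute $\dim_{\Qp}\bigl(\Qp[G/J]\bigr)^H$ for each subgroup $J\leq G$. As $G$ is abelian, $H$ acts on the finite set $G/J$ by translation and $\bigl(\Qp[G/J]\bigr)^H$ is free over $\Qp$ on the set of $H$-orbits, of which there are $[G:HJ]$. Because $G$ is cyclic of $p$-power order, its subgroups form a chain, so for each $J$ either $H\leq J$, in which case $HJ=J$ and $[G:HJ]=|G/J|$, or else $J<H$, in which case $HJ=H$ and $[G:HJ]=|G/H|$. Summing these contributions with the multiplicities $m_J$ (the term $J=H$ falling into the second case) yields exactly
\[
\rk(A(F^H))=\sum_{J>H}|G/J|\,m_J+|G/H|\sum_{J\leq H}m_J ,
\]
which is the asserted formula; note that the hypothesis that $F/k$ is cyclic is precisely what guarantees that the two index sets $\{J>H\}$ and $\{J\leq H\}$ together exhaust all subgroups of $G$.

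Finally, for the displayed inequality I would specialise this formula to $H=G$: since there is no subgroup $J>G$, it gives $\rk(A(k))=\rk(A(F^G))=\sum_{J\leq G}m_J$. Subtracting, the claim $\rk(A(F^H))\leq|G/H|\,\rk(A(k))$ reduces to $\sum_{J>H}(|G/J|-|G/H|)\,m_J\leq 0$, which holds term by term because $J>H$ forces $|J|>|H|$, hence $|G/J|<|G/H|$, while every $m_J$ is non-negative.

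I do not anticipate a genuine obstacle: granted Proposition \ref{MW iso}, the argument is a short bookkeeping exercise. The only points requiring a little care are the compatibility of the formation of $H$-invariants with extension of scalars to $\Qp$ (handled by flatness together with finite generation of $A(F)$) and the observation that the very shape of the stated formula — a sum over $J>H$ plus a sum over $J\leq H$ — relies on the subgroup lattice of $G$ being totally ordered, which is exactly where the cyclicity of $F/k$ is used.
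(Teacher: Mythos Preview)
Your argument is correct and is exactly the routine computation the paper has in mind when it calls the corollary an ``immediate consequence'' of Proposition~\ref{MW iso}; the paper offers no further details. The only cosmetic wrinkle is your parenthetical about the term $J=H$: under your own dichotomy $H\leq J$ versus $J<H$ it actually lands in the first case, but since $|G/J|=|G/H|$ there the regrouping into the displayed sum over $J\leq H$ is harmless.
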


Proposition \ref{MW iso} combines with Roiter's Lemma (see \cite[(31.6)]{curtisr}) to imply the existence of points $\PJj \in A(F)$ and $\PtJj \in A^t(F)$ for
$J \le G$ and $j \in [m_J]$ with the property that

\mpar{global points}
\begin{equation}\label{global points}
  \begin{array}{c}
 A(F)_p = \bigoplus_{J \le G} \bigoplus_{j \in [m_J]} \Zp[G/J] \PJj,\,\,\,\,\,\,\Zp[G/J] \PJj\cong\Zp[G/J],\\
 A^t(F)_p = \bigoplus_{J \le G} \bigoplus_{j \in [m_J]} \Zp[G/J] \PtJj,\,\,\,\,\,\,\Zp[G/J] \PtJj\cong\Zp[G/J].
  \end{array}
\end{equation}
Furthermore, our choice of points as in (\ref{global points}) guarantees that one also has

\mpar{rat global points}
\begin{equation}\label{rat global points}
  \begin{array}{c}
 \Qu \otimes_{\bz} A(F) = \bigoplus_{J \le G} \bigoplus_{j \in [m_J]} \Qu[G/J] \PJj,\,\,\,\,\,\,\Qu[G/J] \PJj\cong\Qu[G/J],\\
 \Qu \otimes_{\bz} A^t(F) = \bigoplus_{J \le G} \bigoplus_{j \in [m_J]} \Qu[G/J] \PtJj,\,\,\,\,\,\,\Qu[G/J] \PtJj\cong\Qu[G/J].
  \end{array}
\end{equation}

We now fix sets
\[
\calP = \{ \PJj\in A(F) : J \le G, j  \in [m_J] \}, \quad
\calP^t = \{ \PtJj\in A^t(F) : J \le G, j  \in [m_J] \},
\]
such that (\ref{rat global points}) holds. For $0 \le t \le n$ we write $H_t$ for the (unique) subgroup of $G$ of order $p^{n-t}$ and
set $\Ptj := P_{(H_t, j)}, \Pttj := P^t_{(H_t, j)}$. We also put $m_t := m_{H_t}$ and $e_{H_t}:=\frac{1}{|H_t|}{\rm Tr}_{H_t}=\frac{1}{|H_t|}\sum_{g\in H_t}g$.
We write $\langle$ , $\rangle_F$ for the N\'{e}ron-Tate height pairing $A(F)\times A^t(F)\to\br$ defined relative to the field $F$ and define a matrix with entries in $\bc[G]$ by setting
\mpar{reg matrix}
\[
R(\calP, \calP^t) := \left( \frac{1}{|H_u|} \sum_{\tau \in G/H_u} \langle \tau\cdot\Puk, \Pttj \rangle_{F} (\tau\cdot e_{H_u}) \right)_{(u,k), (t,j)},
\]
where $(u,k)$ is the row index with $0 \le u \le n$, $k\in[m_u]$, and
$(t,j)$ is the column index with $0 \le t \le n$, $j\in[m_t]$ (we always order sets of the form $\{ (t,j) \colon 0 \le t \le n, j \in [m_t] \}$ lexicographically).
We note that, since each point $\Puk$ belongs to $A(F^{H_u})$, the action of $G/H_u$ on $\Puk$ is well-defined.

For any matrix $A = \left( a_{(u,k), (t,j)}\right)_{(u,k), (t,j)}$ indexed as above we define
\[
A_{t_0} := \left( a_{(u,k), (t,j)}\right)_{(u,k), (t,j), u,t \ge t_0},
\]
with the convention $A_{t_0}=1$ whenever no entries $a_{(u,k), (t,j)}$ with $u,t \ge t_0$ exist.
If $A$ is a matrix with coefficients $a_{ij}$ in $\bc[G]$ or $\bc_p[G]$, then for any $\psi\in\widehat{G}$ we write $\psi(A)$ for the matrix with coefficients
$\psi(a_{ij})$. We also set $R_{t_0}(\calP, \calP^t) = R(\calP, \calP^t)_{t_0}$.

\mpar{minor def}
\begin{defn}
  \label{minor def}
For each character $\psi \in \hat G$
we define $t_\psi \in \{0, \ldots, n\}$ by the equality $\ker(\psi) = H_{t_\psi}$ and call
\mpar{lower psi minor}
\[
  \lambda_\psi(\calP, \calP^t) := \det\left( \psi \left( R_{t_\psi}(\calP, \calP^t) \right) \right).
\]
the 'lower $\psi$-minor' of $R(\calP, \calP^t)$.
\end{defn}

\begin{remark}\label{changeofptsremark}
  It is easy to see that the element $\sum_{\psi \in \widehat G} \lambda_\psi(\calP, \calP^t) e_\psi \in \CC[G]$ depends upon the choice of points
$\calP$ and $\calP^t$ satisfying (\ref{rat global points}) only modulo $\QG^\times$. Similarly, for any given isomorphism of fields $j:\Ce\to\Cp$, it is clear that the element
 $\sum_{\psi \in \widehat G} j(\lambda_\psi(\calP, \calP^t)) e_\psi \in \bc_p[G]$ depends upon the choice
of points $\calP$ and $\calP^t$ satisfying (\ref{global points}) only modulo $\ZpG^\times$.
\end{remark}

For any order $\Lambda$ in $\IQ[G]$ that contains $\IZ[G]$ we let C$(A,\Lambda)$ denote the integrality part
of the equivariant Tamagawa number conjecture (`eTNC' for brevity) for the pair $(h^1(A_F)(1),\Lambda)$ as formulated by Burns and Flach in  \cite[Conj. 4(iv)]{bufl01}.
Similarly, we let  C$(A,\QG)$ denote the rationality part as formulated in \cite[Conj. 4(iii) or Conj. 5]{bufl01}.
We recall that, under the assumed validity of hypothesis $(g)$, C$(A,\Lambda)$ takes the form of an equality in the relative $K$-group $K_0(\Lambda,\RG)$. For each
embedding $j \colon \Re \lra \Cp$ we denote by C$_{p,j}(A, \Lambda)$ the image of this conjectural equality under the
induced map  $K_0(\Lambda,\RG) \lra K_0(\Lambda_p, \CpG)$. We then say that C$_{p}(A, \Lambda)$ is valid if C$_{p,j}(A, \Lambda)$ is valid for every
isomorphism $j:\bc\to\bc_p$.

The eTNC is an equality between analytic and algebraic invariants associated with $A/k$ and $F/k$. In the following we describe and define
the analytic part. We first recall the definition of periods and Galois Gauss sums of \cite[Sec.~4.4]{bmw}.
We fix N\'eron models $\mathcal{A}^t$ for $A^t$ over $\mathcal{O}_k$ and $\mathcal{A}^t_v$ for $A^t_{k_v}$ over $\mathcal{O}_{k_v}$
for each $v$ in $S_p$ and then fix a $k$-basis $\{\omega_{b}\}_{b \in [d]}$ of the space of invariant differentials $H^0(A^t, \Omega^1_{A^t})$ which
gives $\mathcal{O}_{k_v}$-bases of $H^0\bigl(\mathcal{A}^t_v,\Omega_{\mathcal{A}^t_v}^1\bigr)$ for each such $v$ and is also
such that each $\omega_b$ extends to an element of $H^0\bigl(\mathcal{A}^t, \Omega^1_{\mathcal{A}^t}\bigr)$.

For each $v$ in $S_\CC$ we fix a $\ZZ$-basis $\{\gamma_{v,a}\}_{a\in [2d]}$
of $H_1\bigl(\sigma_v(A^t)(\CC),\ZZ\bigr)$. For each $v$ in $S_\RR$ we let $c$ denote complex conjugation and
fix a $\ZZ$-basis $\{\gamma_{v,a}^+\}_{a\in [d]}$ of $H_1\bigl(\sigma_v(A^t)(\CC),\ZZ\bigr)^{c=1}$. For each $v$ in $S_\RR$, resp. $S_\CC$, we then define periods by setting
\[
\Omega_{v}(A/k) := \Biggl\vert\det \biggl( \int_{\gamma_{v,a}^{+}} \omega_b\biggr)_{\!a,b} \Biggr\vert \hbox{, resp. } 
\Omega_v(A/k) := \Biggl\vert\det \biggl( \int_{\gamma_{v,a}} \omega_b , c\Bigl(\int_{\gamma_{v,a}} \omega_b\Bigr) \biggr)_{\!a,b} \Biggr\vert,
\]
where in the first matrix $(a,b)$ runs over $[d]\times [d]$ and in the second matrix $(a,b)$ runs over $[2d]\times [d]$.

In our special case all characters are one-dimensional and, moreover, $|G|$ is odd. Therefore the definitions of \cite {bmw}
simplify and we set
\begin{eqnarray*}
  \Omega(A/k) &:=& \prod_{v \in S_\infty} \Omega_v(A/k), \\
  w_\infty(k) &:=& i^{|S_\Ce|}.
\end{eqnarray*}
For each place $v$ in $S_r$ we write $\bar{I}_v \sseq G$ for the inertia group of $v$ and $\Fr_v$ for the natural Frobenius in $G/\bar{I}_v$.
We define the `non-ramified characteristic' $u_v$ by
\[
u_v(\psi) := \begin{cases} -\psi(\Fr_v^{-1}), & \psi |_{\bar{I}_v} = 1, \\
                           1           , &  \psi |_{\bar{I}_v} \ne 1.
             \end{cases}
\]
and
\[
u(\psi) := \prod_{v \in S_r} u_v(\psi).
\]
For each character $\psi \in \widehat{G}$ we then define the modified Galois-Gauss sum by setting
\[
\tau^*(\Qu,  \ind_k^\Qu(\psi)) := u(\psi) \tau(\Qu, \ind_k^\Qu(\psi)) \in \left( \Qu^c \right)^\times,
\]
where each individual Galois-Gauss sum $\tau(\Qu, \cdot )$ is as defined by Martinet in \cite{martinet}. For each $\psi \in \widehat{G}$
we set
\[
\calL_\psi^* = \calL_{A, F/k, \psi}^* := \frac{L_{S_r}^*(A, \check{\psi}, 1) \tau^*(\Qu,  \ind_k^\Qu(\psi))^d }{\Omega(A/k) w_\infty(k)^d} \in \Ce^\times,
\]
where here for each finite set $\Sigma$ of places of $k$ we write $L^*_\Sigma(A, \psi, 1)$ for the leading term in the Taylor expansion at
$s=1$ of the $\Sigma$-truncated $\psi$-twisted Hasse-Weil-$L$-function of $A$. Without any further mention we will always
assume that the functions $L_\Sigma(A, \psi, s)$ have analytic continuation to $s=1$ (as conjectured in \cite[Conj.~4 (i)]{bufl01}) and recall that they are then expected
to have a zero of order $r_\psi := \dim_\Ce(e_\psi(\Ce \tensorZ A(F)))$ (this is the rank conjecture \cite[Conj.~4 (ii)]{bufl01}).

We finally define
\[
\calL^* = \calL^*_{A, F/k} := \sum_{\psi \in \widehat{G}} \calL^*_{A, F/k, \psi} e_\psi \in \Ce[G]^\times
\]
and note that the element $\calL^*$ defined in \cite[Th.~5.1]{bmw} specialises precisely to our definition.

\mpar{rat theorem}
\begin{thm}\label{rat theorem}
$C(A, \QG)$ is valid if and only if
\[
\mathcal{L}^*_{\psi}\lambda_{\psi}(\mathcal{P},\mathcal{P}^t)^{-1}\in\IQ(\psi)
\]
for all $\psi \in \widehat G$ and furthermore, for any $\gamma\in$ ${\rm Gal}(\IQ(\psi)/\IQ)$,
\[
\mathcal{L}^*_{\psi^{\gamma}}\lambda_{\psi^{\gamma}}(\mathcal{P},\mathcal{P}^t)^{-1}=
\gamma\left(\mathcal{L}^*_{\psi}\lambda_{\psi}(\mathcal{P},\mathcal{P}^t)^{-1}\right),
\] for any, or equivalently every, choice of points $\calP$ and $\calP^t$ such that (\ref{rat global points}) holds.
\end{thm}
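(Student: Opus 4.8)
\noindent\emph{Proof strategy.} The plan is to deduce this from the reinterpretation of the equivariant Tamagawa number conjecture obtained in \cite{bmw}, combined with an explicit evaluation of the ``equivariant regulator'' of loc.\ cit.\ on the chosen points $\calP$ and $\calP^t$. Since $G$ is abelian one has $K_1(\QG)=\QG^\times$ and $K_1(\RG)=\RG^\times$, and because $K_0(\QG)\hookrightarrow K_0(\RG)$ the relative group $K_0(\QG,\RG)$ is canonically $\RG^\times/\QG^\times$. Under this identification the rationality conjecture C$(A,\QG)$ of \cite[Conj.~4(iii)]{bufl01} becomes, as will be recalled in Section \ref{bmw reformulation} and is implicit in \cite[Th.~5.1]{bmw}, the assertion that
\[
\calL^{*}\cdot\mathcal{R}_{A,F/k}^{-1}\in\QG^{\times}\subseteq\Ce[G]^{\times},
\]
where $\mathcal{R}_{A,F/k}\in\Ce[G]^{\times}$ is the equivariant N\'eron--Tate regulator of \cite{bmw}: the value of the period-regulator trivialisation of the fundamental line of $h^1(A_F)(1)$ on the $\QG$-basis determined by the differentials $\{\omega_b\}$, the Betti cycles $\{\gamma_{v,a}\}$ and any points $\calP,\calP^t$ satisfying \eqref{rat global points}. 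Changing these choices multiplies both $\calL^{*}$ and $\mathcal{R}_{A,F/k}$ by the same element of $\QG^\times$, so the displayed membership does not depend on them.

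The heart of the argument is to show that, for our choice of points, $\mathcal{R}_{A,F/k}$ agrees modulo $\QG^\times$ with $\sum_{\psi\in\widehat G}\lambda_\psi(\calP,\calP^t)\,e_\psi$ (and, with the bases arranged suitably, that its $\psi$-component is exactly $\lambda_\psi(\calP,\calP^t)$). The structural input is Proposition \ref{MW iso} and the decompositions \eqref{global points}, \eqref{rat global points} it produces: these force the fundamental line, and the finite-support cohomology complexes of \cite{bmw} from which $\mathcal{R}_{A,F/k}$ is assembled, to split compatibly with the idempotents $e_\psi$. Concretely, for $\psi\in\widehat G$ with $\ker\psi=H_{t_\psi}$ one has $e_\psi\Qu[G/H_t]\neq 0$ exactly when $H_t\subseteq\ker\psi$, i.e.\ $t\ge t_\psi$, so that
\[
e_\psi\bigl(\Qu\otimes_{\bz}A(F)\bigr)=\bigoplus_{t\ge t_\psi}\bigoplus_{j\in[m_t]}\Qu(\psi)\,e_\psi\Ptj,
\]
and similarly for $A^t(F)$ with the $\Pttj$. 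On this component the period-regulator map is, in these bases, the N\'eron--Tate height pairing taken relative to the fields $F^{H_t}$ rather than $F$; as $\langle\,,\,\rangle_F=|H_u|\,\langle\,,\,\rangle_{F^{H_u}}$ on $A(F^{H_u})\times A^t(F^{H_u})$, this is precisely the normalisation built into $R(\calP,\calP^t)$, and its determinant on the $e_\psi$-part is $\det\bigl(\psi(R_{t_\psi}(\calP,\calP^t))\bigr)=\lambda_\psi(\calP,\calP^t)$ of Definition \ref{minor def}. The remaining archimedean and ramified-place contributions to $\mathcal{R}_{A,F/k}$ --- the periods $\Omega_v(A/k)$, the constant $w_\infty(k)$, and the local epsilon-factor terms, which in \cite{bmw} are packaged through $u_v$ and the modified Galois--Gauss sum $\tau^{*}(\Qu,\ind_k^{\Qu}(\psi))$ --- are exactly those entering the definition of $\calL^{*}_{\psi}$ from $L^{*}_{S_r}(A,\check\psi,1)$; matching them term by term identifies the $\psi$-component of $\calL^{*}\cdot\mathcal{R}_{A,F/k}^{-1}$ with $\calL^{*}_{\psi}\,\lambda_\psi(\calP,\calP^t)^{-1}$.

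Finally, writing $\QG=\prod_\psi\Qu(\psi)$ with the product over $\Gal(\Qu^c/\Qu)$-orbits of characters, a tuple $(x_\psi)_\psi\in\Ce[G]^{\times}=\prod_\psi\Ce^{\times}$ lies in $\QG^{\times}$ if and only if $x_\psi\in\Qu(\psi)$ for all $\psi$ and $x_{\psi^{\gamma}}=\gamma(x_\psi)$ for all $\gamma\in\Gal(\Qu(\psi)/\Qu)$. Applying this with $x_\psi=\calL^{*}_{\psi}\,\lambda_\psi(\calP,\calP^t)^{-1}$, and using Remark \ref{changeofptsremark} to see that these two conditions are independent of the choice of $\calP,\calP^t$ with \eqref{rat global points}, turns the membership $\calL^{*}\cdot\mathcal{R}_{A,F/k}^{-1}\in\QG^{\times}$ into exactly the two displayed conditions of the theorem.

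I expect the main obstacle to be the regulator computation of the second paragraph: unwinding the definition of the equivariant regulator of \cite{bmw} --- which is the determinant of an isomorphism of graded invertible $\QG$-modules built from the finite-support cohomology complex and the height pairing --- and verifying that, once restricted to each $e_\psi$-component, it reduces to the lower $\psi$-minor of the single explicit matrix $R(\calP,\calP^t)$, with every normalising factor (the $1/|H_u|$, the periods, $w_\infty(k)$, the Galois--Gauss sum twist) correctly accounted for. A secondary point requiring care throughout is the consistent bookkeeping of $A$ versus $A^t$ and of $\psi$ versus its contragredient $\check\psi$ in the various height pairings and $L$-functions.
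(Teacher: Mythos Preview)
Your strategy is correct and matches the paper's own proof in structure: reduce C$(A,\QG)$ to the membership of $\sum_\psi \calL^*_\psi\,\lambda_\psi(\calP,\calP^t)^{-1}e_\psi$ in $\QG^\times$, then translate that membership into the character-by-character rationality and Galois-equivariance conditions. The paper organises the computation slightly differently from your outline: rather than packaging everything into a single equivariant regulator $\mathcal{R}_{A,F/k}$, it separates the N\'eron--Tate and period contributions, showing (via Lemma~\ref{eval lambda NT}, which evaluates $\lambda^{\mathrm{NT}}_{A,F}$ on the basis $\{\Pttj\}$ in terms of the dual basis $\{P^*_{(s,i)}\}$) that $\det_{\RG}(\lambda^{\mathrm{NT}}_{A,F}\oplus\mathrm{id})$ differs from $\sum_\psi\lambda_\psi(\calP,\calP^t)e_\psi$ by an element of $\QG^\times$, and separately invoking \cite[Lem.~4.5]{bmw} to identify the period isomorphism determinant with $\sum_\psi e_\psi\,w_\infty(k)^d\Omega(A/k)\tau^*(\Qu,\ind_k^\Qu(\psi))^{-d}$ modulo $\QG^\times$; the final conversion to the displayed conditions is a direct citation of \cite[Lem.~2.9]{bleyone}. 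Your $e_\psi$-componentwise description of the regulator computation and the height normalisation $\langle\,,\,\rangle_F=|H_u|\langle\,,\,\rangle_{F^{H_u}}$ is a perfectly good alternative route to the same identity, but the paper's dual-basis computation is a little cleaner and avoids having to track the normalisations by hand.
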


\begin{remarks}
(i)  From the definitions of $u(\psi)$, $w_\infty(k)$ and the definition of local Euler factors it is immediately clear that in the statement
of Theorem \ref{rat theorem} we can replace $\calL_\psi^*$ by
\[
\tilde\calL_\psi^* := \frac{L^*(A, \check{\psi}, 1) \tau(\Qu,  \ind_k^\Qu(\psi))^d }{\Omega(A/k)}.
\]

(ii) The explicit conditions on elements of the form $\mathcal{L}^*_{\psi}\lambda_{\psi}(\mathcal{P},\mathcal{P}^t)^{-1}$ given in
Theorem \ref{rat theorem} generalise and refine the predictions given by Fearnley and Kisilevsky in \cite{fka, fk}. For details see
\cite[Ex.~5.2]{bleyone}. In particular, we note that the numerical computations performed by Fearnley and Kisilevsky can be interpreted via
Theorem \ref{rat theorem} as supporting evidence for conjecture $C(A, \QG)$.
\end{remarks}

We fix a generator $\sigma$ of $G$ and define $\Sigma$ to be the diagonal matrix indexed by pairs
$(t,j),(s,i)$ with $\sigma^{p^t} - 1$ at the diagonal entry
associated to $(t,j)$ and zeros elsewhere.
For any matrix $A = \left( a_{(u,k), (t,j)}\right)_{(u,k), (t,j)}$ indexed by tuples $(u,k)$ and $(t,j)$ as above we define
\[
A^{t_0} := \left( a_{(u,k), (t,j)}\right)_{(u,k), (t,j), u,t \le t_0},
\]
once again with the convention $A^{t_0}=1$ whenever no entries $a_{(u,k), (t,j)}$ with $u,t \le t_0$ exist. We recall that for each character $\psi \in \widehat G$
we defined $t_\psi$ such that $\ker(\psi) = H_{t_\psi}$. We define the the 'upper $\psi$-minor' of $\Sigma$ by
\begin{eqnarray*}
  \delta_\psi            &:=& \det\left( \psi \left( \Sigma^{t_\psi-1} \right) \right).
\end{eqnarray*}
It is easy to see that for another choice of generator of $G$, say $\tau$, one has
\[
\sum_{\psi \in \widehat{G}} \frac{\delta_\psi(\sigma)}{\delta_\psi(\tau)} e_\psi \in \ZpG^\times.
\]

Under our current hypotheses on the data $(A,F/k,p)$ and the additional hypothesis that $\Sha_p(A_F) = 0$, and for any intermediate field $L$ of $F/k$, we shall say that
BSD$_p(L)$ holds if, for any choice of $\bz$-bases $\{Q_i\}$ and $\{R_j\}$ of $A(L)$ and $A^t(L)$ respectively and of isomorphism $j:\Ce\to\Cp$, one has that 
\[j\left(\frac{L^*(A/L,1)\cdot(\sqrt{|d_L|})^d}{{\rm det}(\langle Q_i,R_j\rangle_L)\cdot\prod_{v\in S^L_\infty}\Omega_v(A/L)}\right)\in\bz_p^{\times}.
\] Here $d_L$ denotes the discriminant of the field $L$ and each period $\Omega_v(A/L)$
is as defined above but relative to the field $L$ rather than $k$. It will become apparent in the proof of Theorem \ref{max theorem} below that the validity of BSD$_p(L)$ is
equivalent to the validity of the $p$-part of the eTNC for the pair $(h^1(A_L)(1),\bz)$. We recall that hypotheses (a), (b) and (h) justify the fact that no orders of
torsion subgroups of Mordell-Weil groups, Tamagawa numbers or orders of Tate-Shafarevich groups occur in this formulation, and furthermore note that,
by explicitly computing integrals, the periods $\Omega_v(A/L)$
can be related to those obtained by integrating measures as occurring in the classical formulation of the
Birch and Swinnerton-Dyer conjecture -- see, for example, Gross~\cite[p. 224]{G-BSD}.

For the remainder of this section, we assume that $C(A, \QG)$ is valid. It is then easy to see that, for any order $\Lambda$ in $\IQ[G]$ that contains $\IZ[G]$, the
validity of C$_{p,j}(A,\Lambda)$ is independent of the choice of isomorphism $j:\bc\to\bc_p$, and so we fix such a $j$ for the remainder of this section.
In fact, all relevant elements of $\bc[G]$ appearing in the statements of our results will actually belong to $\bq[G]$
(as a consequence of an easy application of Theorem \ref{rat theorem}) and so we will consider them simultaneously as elements of $\bq_p[G]\subset\bc_p[G]$
in the natural way without any explicit mention of $j$.

Let $\mathcal{M}$ denote the maximal $\ZZ$-order in
$\IQ[G]$. For any $\psi\in\widehat{G}$, let $\mathcal{O}_{\psi}$
be the valuation ring of $\IQ_p(\psi)$. Let $\frp_\psi$
be the (unique) prime ideal of $\mathcal{O}_{\psi}$ above $p$.
We write $v_{\frp_\psi}$ for the normalised valuation defined by $\frp_\psi$.

\mpar{max theorem}
\begin{thm}\label{max theorem}
Let $\calP$ and $\calP^t$ be any choice of points such that (\ref{global points}) holds. We assume that $\Sha_p(A_F) = 0$. Then the following are equivalent.

\begin{itemize}
\item[(i)] $C_p(A, \calM)$ is valid.
\item[(ii)] BSD$_p(L)$ is valid for all intermediate fields $L$ of $F/k$.
\item[(iii)] For each $\psi \in \widehat{G}$ one has
\[
v_{\frp_\psi} \left( \frac{\mathcal{L}^*_{\psi}}{\lambda_{\psi}(\mathcal{P},\mathcal{P}^t)} \right) = b_\psi \text{ where }
b_\psi :=\sum_{s=0}^{t_\psi-1} p^s m_s.
\]
\item[(iv)] $$\sum_{\psi \in \widehat{G}} \frac{\calL_\psi^*}{\lambda_\psi(\calP, \calP^t) \delta_\psi} e_\psi\in \calM_p^\times.$$
\end{itemize}
\end{thm}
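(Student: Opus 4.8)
The plan is to derive the four equivalences from the reinterpretation of $C_p(A,\ZG)$ furnished by \cite[Th.~5.1]{bmw}, which — since $C_p(A,\calM)$ is the image of $C_p(A,\ZG)$ under the natural map $K_0(\ZpG,\CpG)\to K_0(\calM_p,\CpG)$ — I would push forward to the maximal order, together with two more elementary ingredients: a computation of cyclotomic valuations, giving (iii)$\Leftrightarrow$(iv), and the functorial behaviour of the eTNC under restriction of scalars, giving (i)$\Leftrightarrow$(ii). First I would record what the standing hypothesis $C(A,\QG)$ provides. By Theorem~\ref{rat theorem}, for each $\psi\in\widehat G$ the quotient $\calL^*_\psi\lambda_\psi(\calP,\calP^t)^{-1}$ lies in $\IQ(\psi)$ and varies Galois-equivariantly with $\psi$; since $\delta_\psi$ is a nonzero monomial in $\psi(\sigma)$ that is itself Galois-equivariant and $\lambda_\psi(\calP,\calP^t)\ne 0$ by non-degeneracy of the N\'eron--Tate pairing (the matrix $\psi(R_{t_\psi}(\calP,\calP^t))$ being non-singular), the element $\xi:=\sum_{\psi\in\widehat G}\calL^*_\psi(\lambda_\psi(\calP,\calP^t)\delta_\psi)^{-1}e_\psi$ lies in $\QG^\times\subseteq\QpG^\times$. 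As a class represented by an element of $\QpG^\times$ vanishes in $K_0(\calM_p,\CpG)$ precisely when that element lies in $\calM_p^\times=\prod_{t=0}^{n}\calO_{\psi_t}^\times$ (one factor per Galois orbit, with $\psi_t$ any character satisfying $t_{\psi_t}=t$), statement (iv) is equivalent to the equalities $v_{\frp_\psi}\bigl(\calL^*_\psi/\lambda_\psi(\calP,\calP^t)\bigr)=v_{\frp_\psi}(\delta_\psi)$ for all $\psi\in\widehat G$.

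Granting this, (iii)$\Leftrightarrow$(iv) reduces to proving $v_{\frp_\psi}(\delta_\psi)=b_\psi$. Here $\Sigma^{t_\psi-1}$ is the diagonal matrix with entry $\sigma^{p^t}-1$ at each place indexed by $(t,j)$ with $0\le t\le t_\psi-1$, $j\in[m_t]$, so $\delta_\psi=\prod_{t=0}^{t_\psi-1}(\psi(\sigma)^{p^t}-1)^{m_t}$. Since $\ker\psi=H_{t_\psi}$, the element $\psi(\sigma)$ is a primitive $p^{t_\psi}$-th root of unity, whence $\psi(\sigma)^{p^t}$ is a primitive $p^{t_\psi-t}$-th root of unity and the ramification theory of $\IQ_p(\zeta_{p^{t_\psi}})$ gives $v_{\frp_\psi}(\psi(\sigma)^{p^t}-1)=p^t$; summing over $t$ yields $v_{\frp_\psi}(\delta_\psi)=\sum_{t=0}^{t_\psi-1}p^tm_t=b_\psi$, as required.

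The substantive step is (i)$\Leftrightarrow$(iv). By \cite[Th.~5.1]{bmw} the validity of $C_p(A,\ZG)$ is equivalent to an identity in $K_0(\ZpG,\CpG)$ whose analytic side is the image of $\calL^*=\sum_\psi\calL^*_\psi e_\psi$ (this is exactly why $\calL^*$ was normalised as it was) and whose algebraic side is assembled from the Bloch--Kato finite-support cohomology of the $p$-adic Tate module of $A^t$ over $F$ and the equivariant N\'eron--Tate regulator. I would push this identity forward to $K_0(\calM_p,\CpG)$ and evaluate the class of the algebraic side relative to the chosen points. Hypotheses (a)--(f) and (h), together with $\Sha_p(A_F)=0$ — which by (h) forces $\Sha_p(A_L)=0$ for \emph{every} intermediate field $L$ of $F/k$ — guarantee that the contributions of torsion in the Mordell--Weil groups, of the Tamagawa numbers, of the local cohomology at $S_p\cup S_r$ and of Tate--Shafarevich groups all represent units of $\calM_p$ and so may be discarded, leaving the class of the regulator alone. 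The decompositions (\ref{global points}) provide the explicit presentations of $A(F)_p$ and $A^t(F)_p$ needed to represent the underlying perfect complex; the boundary maps forced by the non-projective summands $\Zp[G/H_t]$ assemble into the diagonal matrix $\Sigma$, the height pairing evaluated on the $G$-orbits of the chosen points assembles into $R(\calP,\calP^t)$, and passing to $\psi$-components and taking determinants produces precisely $\lambda_\psi(\calP,\calP^t)\,\delta_\psi$. Thus the algebraic side equals the class of $\sum_\psi\lambda_\psi(\calP,\calP^t)\delta_\psi e_\psi$ modulo $\calM_p^\times$, so the pushed-forward identity becomes exactly $\xi\in\calM_p^\times$, i.e. (iv). I expect this evaluation to be the main obstacle: one must fix a free presentation of the non-projective modules $A(F)_p,A^t(F)_p$ compatible with (\ref{global points}), keep track of how the height pairing interacts with the $G$-action through the blocks indexed by the subgroups $H_t$, verify using hypotheses (a)--(f) and (h) that no residual local or torsion term survives in $K_0(\calM_p,\CpG)$, and — most delicately — produce the matrix $\Sigma$ from the boundary maps while reducing the regulator matrix $R(\calP,\calP^t)$, which is \emph{not} invertible over $\QG$, to its lower minors $\lambda_\psi$, so that the two combine into the single determinant $\lambda_\psi(\calP,\calP^t)\delta_\psi$.

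Finally, (i)$\Leftrightarrow$(ii) follows from the functorial behaviour of the eTNC together with the Artin formalism. The Wedderburn decomposition $\QG=\prod_{t=0}^{n}\IQ(\zeta_{p^t})$ splits $C_p(A,\calM)$ into components indexed by $0\le t\le n$, and compatibility with restriction of scalars identifies, for each $t$, the conjunction of the first $t+1$ of these components with the $p$-part of the eTNC for the pair $(h^1(A_{F^{H_t}})(1),\IZ)$; under hypotheses (a), (b), (h) and $\Sha_p(A_F)=0$ the torsion, Tamagawa and Tate--Shafarevich factors entering the latter are trivial and its period is $\prod_{v\in S_\infty^{F^{H_t}}}\Omega_v(A/F^{H_t})$, so that this $p$-part is precisely BSD$_p(F^{H_t})$ (this also justifies the assertion made just before the theorem). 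Since $F/k$ is cyclic of $p$-power degree, the fields $F^{H_0}\subset F^{H_1}\subset\cdots\subset F^{H_n}$ are exactly the intermediate fields of $F/k$, so letting $t$ run from $0$ to $n$ yields the equivalence of (i) with (ii).
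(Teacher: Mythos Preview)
Your overall plan matches the paper's: push the reformulation (\ref{reform eq}) of \cite[Th.~5.1]{bmw} forward to $\calM_p$, compute $v_{\frp_\psi}(\delta_\psi)=b_\psi$ for (iii)$\Leftrightarrow$(iv), and invoke functoriality for (i)$\Leftrightarrow$(ii). Your valuation computation for $\delta_\psi$ is exactly the one in the paper.

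There is, however, a genuine omission in your derivation of (i)$\Leftrightarrow$(iv). The Euler characteristic $\chi_{G,p}(\Cf,(\lambda^{{\rm NT},j}_{A,F})^{-1})$ is determined not only by the cohomology groups $A^t(F)_p$ and $A(F)_p^*$ together with the height pairing, but also by the Yoneda $2$-extension class $\delta_{A,F,p}\in\Ext^2_{\ZpG}(A(F)_p^*,A^t(F)_p)$ that $\Cf$ represents. When the paper carries out the computation you sketch (this is Proposition~\ref{thedeterminant}, proved in the course of Theorem~\ref{ZpG theorem}), the determinant comes out as $\lambda_\psi(\calP,\calP^t)\cdot\varepsilon_\psi(\Phi)\cdot\delta_\psi$, where $\Phi$ is an isomorphism $M\to A^t(F)_p$ representing $\delta_{A,F,p}$; your claim that one obtains ``precisely $\lambda_\psi(\calP,\calP^t)\,\delta_\psi$'' suppresses this extra factor without justification. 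The paper disposes of it via a separate lemma (Lemma~\ref{unit lemma}): since $\Phi\otimes\calM_p$ restricts to an isomorphism of free $\calO_\psi$-modules on each Wedderburn component, one has $\varepsilon_\psi(\Phi)\in\calO_\psi^\times$. Equivalently, $\Ext^2$ over the DVR $\calO_\psi$ between free modules vanishes, so the extension class becomes trivial after base-change to $\calM_p$. This is precisely the point that must be made explicit, and it is not covered by anything on your list of ``local or torsion terms'' coming from hypotheses (a)--(f), (h).

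For (i)$\Leftrightarrow$(ii) the paper takes a slightly different route from your Wedderburn-plus-induction sketch: it first shows (Lemma~\ref{maxtorsion}) that $C_p(A,\calM)$ is equivalent to $j_{G,*}(T\Omega)$ lying in $K_0(\ZpG,\QpG)_{\rm tor}$, then identifies this torsion subgroup with $\bigcap_{H\le G}\ker(q_0^H\circ\rho_H^G)$ via \cite[Th.~4.1]{whitehead}, and finally uses the functoriality of $T\Omega$ under $\rho_H^G$ and $q_0^H$ from \cite[Prop.~4.1]{bufl01} to identify each image directly with $j_{0,*}(T\Omega(h^1(A_{F^H})(1),\Ze))$, i.e.\ with ${\rm BSD}_p(F^H)$. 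This yields the equivalence in one stroke, avoiding the implicit induction you need to pass from ``the conjunction of the first $t+1$ components'' to statements with $\Ze$-coefficients.
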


To describe the full range of implications of the validity of $C_p(A, \bz[G])$ requires yet more work and some further notations.

For each finite extension $L/k$ and natural number $n$ we write $\Sel^{(p^n)}(A_L)$ for the Selmer group associated
to the isogeny $[p^n]$. We define the $p$-primary Selmer group by
\[
 \Sel_p(A_L) := \directlim  \Sel^{(p^n)}(A_L).
\]
We recall that one then obtains a canonical short exact sequence
\[
0 \lra \Qp/\Zp \tensorZ A(F) \lra  \Sel_p(A_F) \lra \Sha_p(A_F) \lra 0
\]
of $\ZpG$-modules, from which upon taking Pontryagin duals one derives a canonical short exact sequence
\mpar{Sel ses}
\begin{equation}\label{Sel ses}
0 \lra \Sha_p(A_F)^\vee  \lra  \Sel_p(A_F)^\vee \lra A(F)_p^* \lra 0.
\end{equation}
We will throughout use this canonical short exact sequence to fix identifications of $\left( \Sel_p(A_F)^\vee \right)_{{\rm tor}}$ with $ \Sha_p(A_F)^\vee $ and
of $\left( \Sel_p(A_F)^\vee \right)_{{\rm tf}}$ with $A(F)_p^*$.

In \cite{bmw} a suitable integral model $R\Gamma_f(k, T_{p, F}(A))$ of the finite support cohomology of Bloch and
Kato for the base change through $F/k$ of the $p$-adic Tate module of $A^t$ is defined and then used in order to define
an `equivariant regulator' which is essential to the explicit reformulation of C$_p(A,\bz[G])$
(see \cite[Th.~5.1]{bmw}). We will recall this reformulation in Section \ref{bmw reformulation}.

By \cite[Lem. 4.1]{bmw}, $R\Gamma_f(k,T_{p,F}(A))$ is under our current hypotheses a perfect complex of
$\bz_p[G]$-modules which is acyclic outside degrees 1 and 2 and whose cohomology groups in degrees 1 and 2 canonically
identify with $A^t(F)_p$ and ${\rm Sel}_p(A_F)^\vee$ respectively. 
We recall that given any complex $E$ with just two nonzero cohomology modules $H^m(E)$ and $H^n(E)$, $n > m$, the complex
$\tau^{\le n}\tau^{\ge m} E$ represents an element in the Yoneda ext-group $\Ext_{\ZpG}^{n-m+1}(H^n(E), H^m(E))$. Here $\tau$ is the truncation of 
complexes preserving cohomology in the indicated degrees. In this way, $R\Gamma_f(k,T_{p,F}(A))$ uniquely determines a class $\delta_{A,K,p}$ in 
${\rm Ext}^2_{\bz_p[G]}({\rm Sel}_p(A_F)^\vee,A^t(F)_p)$. The element $\delta_{A,K,p}$ is furthermore
perfect, meaning that it can be represented as a Yoneda 2-extension by a four term exact sequence in which each of the two middle modules is perfect when considered
as an object of $D(\Zp[G])$.
We will use Proposition \ref{MW iso}
to fix an explicit $2$-syzygy of the form
\mpar{inexplicitsyzygy}
\begin{equation}\label{inexplicitsyzygy}
0\to M\stackrel{\iota}{\to}F^0\to F^1\to A(F)_p^*\to 0,
\end{equation}
in which we set \[M:=\bigoplus_{(t,j)} \Zp[G/H_t]\] and both $F^0$ and $F^1$ are finitely generated free $\bz_p[G]$-modules
and then use the exact sequence (\ref{inexplicitsyzygy}) to compute $\Ext^2_{\ZpG}(A(F)_p^*, A^t(F)_p)$ via the 
canonical isomorphism
\[
\Ext^2_{\ZpG}(A(F)_p^*, A^t(F)_p) \simeq \Hom_{\ZpG}(M, A^t(F)_p) / \iota_* \left(  \Hom_{\ZpG}(F^0, A^t(F)_p) \right).
\]

If we now assume that $\Sha_p(A_F)$ vanishes, we may identify $\Sel_p(A_F)^\vee$ and $A(F)^*_p$, so that $\delta_{A,F,p}$
uniquely determines an element of the above quotient.  We will prove (see Lemmas \ref{extvanishes} and \ref{ext iso} below)
that we may choose a representative $\Phi\in\Hom_{\ZpG} (M, A^t(F)_p)$ of $\delta_{A,F,p}$ with the following properties:
\begin{itemize}
\item [(P1)] $\Phi$ is bijective,
\item [(P2)]  for every $j\in[m_n]$, $\Phi$ restricts to send an element  $x_{(n,j)}$  of the $(n,j)$-th direct summand $\bz_p[G]$ to $x_{(n,j)}P^t_{(n,j)}$.
\end{itemize}

For a fixed choice of points $\calP$ and $\calP^t$ such that (\ref{global points}) holds and of $\Phi\in\Hom_{\ZpG} (M, A^t(F)_p)$ as above,
we fix a canonical $\bz_p[G/H_t]$-basis element $e_{(t,j)}$
of each direct summand $\bz_p[G/H_t]$ of $M$ and fix any elements $\Phi_{(t,j), (s,i)}$ of $\Zp[G]$ with the property that
\mpar{definingphi}
\begin{equation}
\label{definingphi}
\Phi(e_{(s,i)}) = \sum_{(t,j)} \Phi_{(t,j), (s,i)} P^t_{(t,j)}.
\end{equation}
We thus obtain an invertible  matrix $\left( \Phi_{(t,j), (s,i)} \right)_{(t,j), (s,i)}$ with entries in $\bz_p[G]$, which by abuse of notation
we shall also denote by $\Phi$. The matrix $\Phi$ is of the form
\mpar{IMPROVE}
\mpar{Phi mat}
\begin{equation}\label{Phi mat}
\left(
    \begin{array}{ccc}
      \left(\Phi_{(t,j),(s,i)} \right)_{t,s < n} & \vline &
      \begin{array}{c}
        0\\\vdots\\0
      \end{array}
\\
      \hline \\
      \begin{array}{ccc} 0 & \hdots & 0 \end{array} &\vline & I_{m_n}
\end{array}
\right)
\end{equation} with $I_{m_n}$ denoting the identity $m_n\times m_n$ matrix.

Recall the definition of $t_\psi$ in Definition \ref{minor def}.
We define the 'lower $\psi$-minor' of $\Phi$ by setting
\[
  \varepsilon_\psi(\Phi) := \det\left( \psi \left( \Phi_{t_\psi} \right) \right).
\]
We note firstly that, since the chosen points $P^t_{(t,j)}$ satisfy (\ref{global points}), each element $\varepsilon_\psi(\Phi)$
(and, indeed, even the matrix $\psi \left( \Phi_{t_\psi} \right))$ is independent of our particular
choice of elements $\Phi_{(t,j), (s,i)} \in \Zp[G]$ with the property that (\ref{definingphi}) holds.

\mpar{ZpG theorem}
\begin{thm}\label{ZpG theorem}
Let $\calP$ and $\calP^t$ be any choice of points such that (\ref{global points}) holds. Assume that $\Sha_p(A_F) = 0$.
Let $\Phi \in \Hom_{\ZpG}(M, A^t(F)_p)$ be any representative of $\delta_{A,F,p}$ such that (P1) and (P2) hold.
Then $C_p(A, \ZG)$ is valid if and only if
\mpar{ZpG criterion}
\begin{equation}\label{ZpG criterion}
\sum_{\psi \in \widehat G} \frac{ \calL_\psi^*  }{ \lambda_\psi(\calP, \calP^t) \cdot \varepsilon_{\psi}(\Phi) \cdot \delta_\psi  }e_\psi \in \ZpG^\times.
\end{equation}
\end{thm}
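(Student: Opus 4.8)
The plan is to deduce Theorem \ref{ZpG theorem} from the reformulation of $C_p(A,\ZG)$ in terms of the equivariant regulator of \cite[Th.~5.1]{bmw}, which will be recalled in Section \ref{bmw reformulation}. That reformulation expresses validity of $C_p(A,\ZG)$ as the assertion that a certain element of $\CpG^\times$ — built from $\calL^\ast$ together with the image under a suitable trivialisation of the determinant of $R\Gamma_f(k,T_{p,F}(A))$ — lies in $\ZpG^\times$. My first step is therefore to rewrite that determinant. Using the canonical identifications of the cohomology of $R\Gamma_f(k,T_{p,F}(A))$ in degrees $1$ and $2$ with $A^t(F)_p$ and $\Sel_p(A_F)^\vee$ (and the hypothesis $\Sha_p(A_F)=0$, which identifies the latter with $A(F)_p^\ast$), and the fact that $\delta_{A,F,p}$ is the class of this two-term complex in $\Ext^2_{\ZpG}(A(F)_p^\ast,A^t(F)_p)$, I would replace $R\Gamma_f(k,T_{p,F}(A))$ by the four-term exact sequence obtained by splicing the chosen $2$-syzygy (\ref{inexplicitsyzygy}) with the representative $\Phi$ from (P1)–(P2): this gives a complex $[F^0\to F^1]$ (in degrees $1,2$, say, after shifting) whose two nonzero modules are finitely generated free $\ZpG$-modules and which represents the same object of $D^p(\ZpG)$ as $R\Gamma_f(k,T_{p,F}(A))$ up to the bijection $\Phi$ on $H^1$ and the identity on $H^2$. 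Concretely, the $2$-extension class of $[0\to M\to F^0\to F^1\to A(F)_p^\ast\to 0]$ pushed out along $\Phi\colon M\to A^t(F)_p$ is $\delta_{A,F,p}$, so the mapping cone computes the refined Euler characteristic up to the contribution of $\Phi$.

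The second step is to compute the real/$p$-adic trivialisation against this explicit model. The archimedean trivialisation of \cite{bmw} is given, on each $\psi$-component, by the Néron–Tate height pairing; with respect to the bases of $\Qu\otimes A(F)$ and $\Qu\otimes A^t(F)$ furnished by $\calP$, $\calP^t$ as in (\ref{rat global points}), the determinant of this pairing over $\CpG$ decomposes, $\psi$ by $\psi$, as the product of the lower $\psi$-minors $\lambda_\psi(\calP,\calP^t)$ (this is exactly the point of Definition \ref{minor def}: only the "lower" block, indexed by $u,t\ge t_\psi$, survives on the $e_\psi$-component because $\psi$ kills the Mordell–Weil contributions coming from $G/H_t$ with $H_t\not\supseteq\ker\psi$). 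The transition from the free modules $M$, $F^0$, $F^1$ to these rational bases contributes the upper $\psi$-minor $\varepsilon_\psi(\Phi)$ of the matrix $\Phi$ — with the block-triangular shape (\ref{Phi mat}) guaranteeing, via (P2), that the $(n,j)$-rows and columns contribute trivially, so that $\varepsilon_\psi(\Phi)$ really does only see the $t_\psi$-truncation — together with the factor $\delta_\psi$ coming from the differentials $\sigma^{p^t}-1$ in the standard free resolution of the $\ZpG$-modules $\Zp[G/H_t]$ used to build (\ref{inexplicitsyzygy}). Assembling these, the element whose integrality encodes $C_p(A,\ZG)$ is exactly $\sum_\psi \calL^\ast_\psi\bigl(\lambda_\psi(\calP,\calP^t)\,\varepsilon_\psi(\Phi)\,\delta_\psi\bigr)^{-1}e_\psi$, which is (\ref{ZpG criterion}).

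Two further points need to be checked to make this rigorous. First, independence of all choices: by Remark \ref{changeofptsremark} the element $\sum_\psi \lambda_\psi(\calP,\calP^t)e_\psi$ is well-defined modulo $\ZpG^\times$, the remark after the definition of $\delta_\psi$ handles the choice of generator $\sigma$, and the normalisation (P2) together with the isomorphism $\Ext^2_{\ZpG}(A(F)_p^\ast,A^t(F)_p)\simeq \Hom_{\ZpG}(M,A^t(F)_p)/\iota_\ast\Hom_{\ZpG}(F^0,A^t(F)_p)$ shows that changing $\Phi$ within its class or changing the syzygy (\ref{inexplicitsyzygy}) alters $\varepsilon_\psi(\Phi)$ only by a factor whose $\psi$-assembly lies in $\ZpG^\times$; so the criterion (\ref{ZpG criterion}) is intrinsic. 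Second, the existence of a $\Phi$ with properties (P1) and (P2) must be established — this is precisely the content of the forthcoming Lemmas \ref{extvanishes} and \ref{ext iso}, which I would invoke. The main obstacle, and the step requiring genuine care, is the bookkeeping of step two: correctly matching the trivialisation morphism of \cite[Th.~5.1]{bmw} to the explicit free model, keeping track of which $\psi$-components of which blocks survive after applying $e_\psi$, and verifying that the three "minor" factors $\lambda_\psi$, $\varepsilon_\psi(\Phi)$, $\delta_\psi$ multiply together to exactly the determinant of the comparison isomorphism with no spurious extra factors (in particular that the period and Galois–Gauss sum normalisations already built into $\calL^\ast_\psi$ match those in \cite{bmw} — which is asserted in the paragraph preceding Theorem \ref{rat theorem}). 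Once the block structure (\ref{Phi mat}) and the truncation conventions $A_{t_0}$, $A^{t_0}$ are used systematically, this becomes a finite linear-algebra verification over $\QpG=\prod_\psi \Qp(\psi)$.
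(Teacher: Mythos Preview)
Your approach is essentially the same as the paper's: both reduce to the reformulation (\ref{reform eq}) from \cite{bmw}, replace $\Cf$ by the explicit free model built from the $2$-syzygy (\ref{inexplicitsyzygy}) pushed out along $\Phi$, and then compute the refined Euler characteristic to obtain the three factors $\lambda_\psi$, $\varepsilon_\psi(\Phi)$, $\delta_\psi$ on each $\psi$-component (the paper packages this computation as Proposition \ref{thedeterminant}, carried out via explicit splittings $s_1,s_2$ and Lemma \ref{eval lambda NT}). One small slip to fix: $\varepsilon_\psi(\Phi)$ is by definition the \emph{lower} $\psi$-minor $\det(\psi(\Phi_{t_\psi}))$, not the upper one, and the reason it sees only indices $\geq t_\psi$ is that $e_\psi\cdot\Zp[G/H_s]=0$ for $s<t_\psi$, not the block shape (\ref{Phi mat}) --- property (P2) is used later, in Corollary \ref{Cor 1}, to force $\varepsilon_\psi(\Phi)=1$ for $\psi|_H\neq 1$.
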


\begin{remark} Theorem \ref{ZpG theorem} can be reformulated in terms of explicit congruences.
\end{remark}



Via Theorem \ref{ZpG theorem}, we now obtain completely explicit predictions concerning congruences in the augmentation filtration
of the integral group ring $\bz_p[G]$ for leading terms at $s=1$ of the relevant Hasse-Weil-$L$-functions of $A$ normalised by
our twisted regulators. We recall that such predictions constitute a refinement and generalisation of the congruences for modular
symbols that are conjectured by Mazur and Tate in \cite{mazurtate}.

In order to state such conjectural congruences, we require the following notation: if the inequality ${\rm rk}(A(F^J))\leq
|G/J|{\rm rk}(A(k))$ of Corollary \ref{ranks} is strict for some subgroup $J$ of $G$, we may and will denote by $H=H_{t_0}$
the smallest non-trivial subgroup of $G$ with the property that $m_H \neq 0$. Hence $t_0$ is the maximal index with the
properties $m_{t_0} \ne 0$ and $t_0 < n$. We then define
\[
\mathcal{L}:=
\begin{cases}
\sum\limits_{\psi\in\widehat{G}} \frac{ \calL_\psi^*  }{ {\rm det}(\psi(R(\calP, \calP^t))) }e_\psi, & \text{ if } {\rm rk}(A(F^J))=
|G/J|{\rm rk}(A(k)) \text{ for every } J, \\
\sum\limits_{\psi|_H \neq 1} \frac{ \calL_\psi^*  }{ \lambda_\psi(\calP, \calP^t) }e_\psi, & \text{ otherwise}.
\end{cases}
\]

We also let $I_{G,p}$ denote the kernel of the augmentation map $\ZpG \lra \Zp$.

\mpar{Cor 1}
\begin{cor}\label{Cor 1}
  Let $\calP$ and $\calP^t$ be any choice of points such that (\ref{global points}) holds. Assume that $\Sha_p(A_F) = 0$.
Let $\Phi \in \Hom_{\ZpG}(M, A^t(F)_p)$ be any representative of $\delta_{A,F,p}$ such that (P1) and (P2) hold.
If $C_p(A, \ZG)$ is valid, then
\begin{itemize}
\item [(i)] $\calL$ belongs to the ideal $I_{G,p}^h$ of $\bz_p[G]$, where $h := \sum_{t < n} m_t$.
\item [(ii)] $\epsilon := \det \left( {\trivchar}(\Phi) \right) \in \Ze_p^\times$.
\item [(iii)] $v := (-1)^{d\cdot |S_r|}\frac{L^*_{S_r}(A/k,1)\cdot(\sqrt{|d_k|})^d}{\Omega(A)\cdot\det \left( {\trivchar}(R(\calP, \calP^t)) \right) } \in \Ze_p^\times$.
\item [(iv)] $\calL \equiv \frac v \epsilon \cdot \prod_{t < n} \left( \sigma^{p^t} - 1 \right)^{m_t} (\mod I_{G, p}^{h+1}).$
\end{itemize}
\end{cor}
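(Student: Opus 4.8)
The plan is to deduce Corollary \ref{Cor 1} directly from Theorem \ref{ZpG theorem}, which already identifies $C_p(A,\ZG)$ with the unit condition (\ref{ZpG criterion}). First I would establish the key structural observation about the matrix $\Phi$ in (\ref{Phi mat}): because of the block-triangular shape with an $I_{m_n}$ block, the lower $\psi$-minor $\varepsilon_\psi(\Phi)$ for $\psi$ with $t_\psi = t_0 + 1, \ldots, n$ only involves entries $\Phi_{(t,j),(s,i)}$ with $t,s \ge t_\psi > t_0$, and since $m_t = 0$ for $t_0 < t < n$ these minors are actually determinants over the $I_{m_n}$ block; in particular one can arrange that $\varepsilon_\psi(\Phi)$ and $\lambda_\psi(\calP,\calP^t)$ for these "large" $\psi$ carry no augmentation-ideal contribution, and similarly $\delta_\psi = \det(\psi(\Sigma^{t_\psi - 1}))$ accounts for the $\prod_{t<n}(\sigma^{p^t}-1)^{m_t}$ factor exactly. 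This is where the combinatorics of the indexing $b_\psi = \sum_{s=0}^{t_\psi-1} p^s m_s$ and $h = \sum_{t<n} m_t$ must be reconciled — that reconciliation is the main obstacle.

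Next I would treat parts (ii) and (iii). For (ii), applying the trivial character $\trivchar$ to (\ref{ZpG criterion}) and using that $\trivchar(\delta_\psi)$ for $\psi = \trivchar$ (i.e. $t_\psi = 0$) is an empty determinant equal to $1$, together with the fact that $\trivchar(\lambda_{\trivchar}(\calP,\calP^t))$ is a nonzero rational (the classical regulator), one sees that $\trivchar$ applied to the unit in $\ZpG$ forces $\det(\trivchar(\Phi)) \in \Zp^\times$. For (iii), I would unwind the definition of $\calL^*_{\trivchar}$ in terms of $L^*_{S_r}(A/k,1)$, $\Omega(A/k)$, $w_\infty(k)$, and the modified Gauss sum $\tau^*(\QQ, \ind_k^\QQ(\trivchar))$; since $\ind_k^\QQ(\trivchar)$ is just the character of $k/\QQ$, this Gauss sum is (up to sign and the discriminant $\sqrt{|d_k|}$, via the conductor–discriminant formula) explicitly computable, and the sign bookkeeping $(-1)^{d|S_r|}$ comes from the $u_v(\trivchar) = -\trivchar(\Fr_v^{-1}) = -1$ factors raised to the $d$-th power. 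Combining the unit condition at $\trivchar$ with (ii) then yields (iii). The equality in (i) and (iv) that $v/\epsilon$ really equals the leading coefficient modulo $I_{G,p}^{h+1}$ then follows by tracking, in the image of the unit (\ref{ZpG criterion}) under $\ZpG \to \ZpG/I_{G,p}^{h+1}$, exactly which factors of $(\sigma^{p^t}-1)$ survive.

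For parts (i) and (iv) proper, I would argue as follows. From Theorem \ref{ZpG theorem}, $\sum_\psi \calL^*_\psi (\lambda_\psi \varepsilon_\psi(\Phi) \delta_\psi)^{-1} e_\psi$ is a unit $u \in \ZpG^\times$. Multiplying through, $\calL$ — which by its case-definition either equals $\sum_\psi \calL^*_\psi / \det(\psi(R(\calP,\calP^t))) \, e_\psi$ (when all rank inequalities are equalities, so the only contributions to the augmentation filtration come from $\delta_\psi$ and the block structure collapses) or the partial sum over $\psi|_H \ne 1$ — can be written as $u$ times an explicit product of the matrices $\psi(\Sigma^{\bullet})$, $\psi(\Phi_{t_\psi})$, and the "upper" parts of $\psi(R)$. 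Using Remark \ref{changeofptsremark} to reduce to a convenient choice of points, and the fact that each $\sigma^{p^t} - 1$ lies in $I_{G,p}$ but not $I_{G,p}^2$ while their products behave multiplicatively on the associated graded $\bigoplus_h I_{G,p}^h/I_{G,p}^{h+1} \cong \Zp[x]/(x \cdot \text{stuff})$-style ring, I would show the total augmentation-valuation of $\calL$ is exactly $h = \sum_{t<n} m_t$, with leading term the product over $t < n$ of $(\sigma^{p^t}-1)^{m_t}$ scaled by the $\trivchar$-component of $u$, which is $v/\epsilon$ by (ii) and (iii). The hard part throughout is the careful bookkeeping between the three filtration-contributing determinants ($\delta_\psi$, $\varepsilon_\psi(\Phi)$, and the minor of $R$) and checking that only the $\delta_\psi$-factors actually contribute to the leading term while the other two contribute units modulo $I_{G,p}$; I would isolate this in a preliminary lemma computing each of $v_{I_{G,p}}(\varepsilon_\psi(\Phi))$-type quantities before assembling the final congruence.
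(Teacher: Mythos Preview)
Your approach is the same as the paper's --- deduce everything from Theorem \ref{ZpG theorem} via the unit $u := \sum_\psi \calL^*_\psi (\lambda_\psi \varepsilon_\psi(\Phi) \delta_\psi)^{-1} e_\psi \in \ZpG^\times$ --- but you are making (i) and (iv) much harder than necessary. The paper's argument rests on a single \emph{exact identity} $\calL = \delta \cdot u$ in $\ZpG$, where $\delta := \prod_{t < n}(\sigma^{p^t}-1)^{m_t}$. This holds because, for $\psi$ with $\psi|_H \ne 1$ (equivalently $t_\psi > t_0$), the block shape (\ref{Phi mat}) gives $\varepsilon_\psi(\Phi) = 1$ \emph{on the nose} (since $m_t = 0$ for $t_0 < t < n$, the lower minor $\Phi_{t_\psi}$ is just the $I_{m_n}$ block) and $\delta e_\psi = \delta_\psi e_\psi$; while for $\psi$ with $\psi|_H = 1$ one has $\delta e_\psi = 0$. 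Hence $\delta u = \sum_{\psi|_H \ne 1}(\calL^*_\psi/\lambda_\psi) e_\psi = \calL$. Claim (i) is then immediate from $\delta \in I_{G,p}^h$, and (iv) follows because $u \equiv \varepsilon(u) = v/\epsilon \pmod{I_{G,p}}$ gives $\calL = \delta u \equiv (v/\epsilon)\delta \pmod{I_{G,p}^{h+1}}$. There is no ``careful bookkeeping'' among three filtration-contributing determinants, no ``upper parts of $\psi(R)$'', no change of points via Remark \ref{changeofptsremark}, and no preliminary lemma on $I_{G,p}$-valuations required. (In the degenerate case where all rank inequalities are equalities one simply has $\calL = u$, $h = 0$, and both claims are trivial.)

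For (ii) your argument has a small circularity: applying $\trivchar$ to $u$ only tells you the \emph{ratio} $\calL^*_\trivchar/(\lambda_\trivchar \epsilon)$ lies in $\Zp^\times$, not that $\epsilon$ itself does. The paper instead invokes Lemma \ref{unit lemma}, which shows $\varepsilon_\psi(\Phi) \in \Zp[\psi]^\times$ for every $\psi$ directly from the bijectivity of $\Phi$, independently of $C_p(A,\ZG)$. With (ii) in hand, (iii) follows from $\varepsilon(u) \in \Zp^\times$ together with the Gauss-sum computation $\tau^*(\Qu,\ind_k^\Qu(\trivchar))/w_\infty(k) = (-1)^{|S_r|}\sqrt{|d_k|}$ that you correctly outline.
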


The theory of organising matrices developed by Burns and the second named author in \cite{omac} allows one to derive the
containment $\calL\in I_{G,p}^h$ of Corollary \ref{Cor 1}(i) from the assumed validity of conjecture $C_p(A, \ZG)$ in
situations in which $\Sha_p(A_F)$ is non-trivial. In this greater level of generality, it furthermore leads to explicit statements
concerning annihilation of Tate-Shafarevich groups and (generalised) `strong main conjectures' of the kind that Mazur and Tate
ask for in \cite[Remark after Conj. 3]{mazurtate}. Namely, we obtain the following result:

\mpar{Ann theorem}
\begin{thm}\label{Ann theorem}
  Let $\calP$ and $\calP^t$ be any choice of points such that (\ref{global points}) holds. If $C_p(A, \ZG)$ is valid, then
\begin{itemize}
\item [(i)] $\calL$ belongs to the ideal $I_{G,p}^h$ of $\bz_p[G]$, where $h := \sum_{t < n} m_t$.
\item [(ii)] $\calL$ annihilates the $\bz_p[G]$-module $\Sha_p(A_F^t)$.
\item [(iii)] There exists a (finitely generated) free $\bz_p[G]$-submodule $\Pi$ of $\Sel_p(A_F)^\vee$ of (maximal) rank $m_n$
with the property that $\calL$ generates the Fitting ideal of the quotient $\Sel_p(A_F)^\vee/\Pi$.
\end{itemize}
\end{thm}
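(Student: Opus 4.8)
The proof of Theorem \ref{Ann theorem} is essentially a descent from the maximal-order computation to the integral group ring $\bz_p[G]$, combined with the machinery of organising matrices of \cite{omac}, and the key is to pass from the specific situation $\Sha_p(A_F)=0$ treated in Theorem \ref{ZpG theorem} to the general case. First I would recall that by the analysis of \cite[\S 5]{bmw}, $C_p(A,\ZG)$ is equivalent to an equality in $K_0(\ZpG,\CpG)$ relating $\calL^*$ to the equivariant regulator attached to $R\Gamma_f(k,T_{p,F}(A))$; the point of the theory in \cite{omac} is that this complex, together with a choice of $2$-syzygy as in (\ref{inexplicitsyzygy}), yields an ``organising matrix'' whose behaviour along the augmentation filtration $I_{G,p}^\bullet$ encodes exactly the invariants appearing in the statement. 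Concretely, one reads off from the shape (\ref{Phi mat}) of the matrix $\Phi$ — or rather its analogue without the vanishing hypothesis on $\Sha$ — that the relevant normalised leading-term element $\calL$ has ``lower minors'' governed by the $m_t$ with $t<n$, which gives the containment $\calL\in I_{G,p}^h$ with $h=\sum_{t<n}m_t$ of part (i). This part does not actually need $\Sha_p(A_F)=0$, and that is precisely the gain over Corollary \ref{Cor 1}(i): the organising-matrix formalism accommodates a non-perfect torsion contribution from $\Sha_p(A_F)^\vee$ in the exact sequence (\ref{Sel ses}).

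For part (ii), the strategy is to exploit the Yoneda $2$-extension class $\delta_{A,F,p}\in\Ext^2_{\ZpG}(\Sel_p(A_F)^\vee,A^t(F)_p)$ directly, without splitting off $\Sha$. Writing $\delta_{A,F,p}$ as a four-term exact sequence $0\to A^t(F)_p\to F^0\to F^1\to \Sel_p(A_F)^\vee\to 0$ with $F^0,F^1$ free (using perfectness of the class, as noted in the excerpt), one obtains an associated matrix whose determinant-type invariants over the maximal order $\calM_p$ are controlled by Theorem \ref{max theorem}, and whose failure to be a unit over $\ZpG$ measures exactly the obstruction from $\Sha_p(A^t_F)$. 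The key input is the standard fact (Fitting-ideal formalism: $\Fit_{\ZpG}(N)$ annihilates $N$) applied to the cokernel/torsion of the map induced by this four-term sequence, together with the duality between $\Sha_p(A_F)$ and $\Sha_p(A^t_F)$ implicit in the construction of $R\Gamma_f$. Thus $\calL$, being (up to a unit) the appropriate ``lower minor'' of the organising matrix, lands in $\Fit_{\ZpG}(\Sha_p(A^t_F))$ and hence annihilates it. For part (iii), $\Pi$ is the free rank-$m_n$ direct summand of $\Sel_p(A_F)^\vee$ corresponding — via the block structure (\ref{Phi mat}), in which the bottom-right block is $I_{m_n}$ and property (P2) pins down the points $P^t_{(n,j)}$ — to the summands $\bz_p[G/H_n]=\bz_p[G]$; one then checks that the induced presentation of $\Sel_p(A_F)^\vee/\Pi$ has $\calL$ as its $0$-th Fitting ideal, using that the remaining block $(\Phi_{(t,j),(s,i)})_{t,s<n}$ composed with the regulator matrix $R(\calP,\calP^t)$ is, up to $\ZpG^\times$, the organising matrix for the quotient.

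I would assemble these in the order: (i) first, as it is the formal consequence of $\calL\in I_{G,p}^h$ purely from the filtration behaviour of the organising matrix and the rank formula of Corollary \ref{ranks}; then (iii), since identifying the quotient $\Sel_p(A_F)^\vee/\Pi$ and its Fitting ideal is what actually produces the element $\calL$ as a Fitting generator; then (ii), which follows from (iii) together with the general principle that a Fitting ideal annihilates the module, applied after transporting along the $\Sha_p(A_F)\leftrightarrow\Sha_p(A^t_F)$ duality and the exact sequence (\ref{Sel ses}) (so that the torsion $\Sha_p(A_F)^\vee$ of $\Sel_p(A_F)^\vee$ is handled correctly). The main obstacle, I expect, is precisely the bookkeeping in passing from the perfect-complex/organising-matrix description over $\ZpG$ to an honest finite presentation of $\Sel_p(A_F)^\vee$ with $\Sha$ present: one must verify that the four-term sequence representing $\delta_{A,F,p}$ can be chosen with middle terms free (not merely perfect) after a suitable stabilisation, that the resulting matrix's ``lower $\psi$-minors'' still specialise correctly under every $\psi\in\widehat G$ to the quantities $\calL^*_\psi/\lambda_\psi(\calP,\calP^t)$ up to the units $\varepsilon_\psi(\Phi)\delta_\psi$ of Theorem \ref{ZpG theorem}, and that the definition of $\calL$ by cases (the ``otherwise'' branch with the partial sum over $\psi|_H\neq 1$) matches the Fitting generator on the nose rather than merely up to a unit. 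This compatibility of the two definitions of $\calL$ across the case distinction, and its interaction with the non-perfect part of the cohomology, is where the real work lies; everything else is an application of the already-established Theorems \ref{max theorem} and \ref{ZpG theorem} and the formalism of \cite{omac}.
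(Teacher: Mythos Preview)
Your proposal correctly identifies the key external input (\cite{omac}), the logical order (first (i) and (iii), then (ii) from (iii) via $\Fit\subseteq\Ann$ together with Cassels--Tate duality), and even correctly flags where the difficulty lies. However, the mechanism you describe---extract an ``analogue of $\Phi$'' from a four-term sequence with free middle terms and read off its lower $\psi$-minors---is not what the paper does, and your sketch does not explain how that gap would actually be closed.

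The paper never constructs a matrix $\Phi$ when $\Sha_p(A_F)\neq 0$. Instead it works at the level of complexes. One first lifts the free part $P=\bigoplus_{j\in[m_n]}\ZpG P^*_{(n,j)}$ of $A(F)_p^*$ through the surjection in (\ref{Sel ses}) to a free $\ZpG$-submodule $\Pi=\im(\kappa)$ of $\Sel_p(A_F)^\vee$ (a submodule, \emph{not} a direct summand; your description is wrong on this point). One then fits this into a $3\times 4$ commutative diagram of exact rows, producing an exact triangle $B^\bullet\to\Cf\to D^\bullet$ in $D^p(\ZpG)$ with $H^1(D^\bullet)=N:=\bigoplus_{t<n,j}\Zp[G/H_t]$ and $H^2(D^\bullet)=\cok(\kappa)$. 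The crucial step is to apply the idempotent $e_N:=\sum_{\psi\in\Upsilon_N}e_\psi$, where $\Upsilon_N=\{\psi:e_\psi(\Cp\otimes N)=0\}$; after base change to $e_N\ZpG$ the complex $e_ND^\bullet$ becomes rationally acyclic, and additivity of Euler characteristics along the triangle (together with the regulator computation of Lemma~\ref{eval lambda NT}) shows directly that $j_*(\calL)$ is a \emph{characteristic element} for $e_ND^\bullet$. The case distinction in the definition of $\calL$ is exactly the description of $\Upsilon_N$: it is $\{\psi:\psi|_H\neq 1\}$ in the second case and all of $\widehat G$ in the first. One then lifts via \cite[Lem.~2.6]{omac} to a characteristic element for $D^\bullet$ itself and applies \cite[Cor.~3.3]{omac} to obtain (i) and (iii) simultaneously; (ii) then follows since $\Sha_p(A_F)^\vee\hookrightarrow\cok(\kappa)$.

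So the real work is not matching minors of an explicit matrix against $\calL^*_\psi/\lambda_\psi$, as you suggest, but rather the triangle-plus-idempotent argument that produces $\calL$ as a characteristic element without ever writing down an organising matrix. Your route might be made to work, but it would require exactly the computation you declare to be the obstacle and do not carry out.
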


\begin{remark}\label{shaorpermutation} It will become clear in the course of the proof that, provided that there exist sets of points $\calP$ and $\calP^t$
such that (\ref{global points}) holds from which one may construct the element $\calL$, Theorem \ref{Ann theorem} remains valid even if hypothesis (h) fails to hold. This
fact is relevant because, as we will see in Section \ref{examples}, it allows us to obtain numerical supporting evidence for $C_p(A, \ZG)$ (via verifying the
explicit assertions of Theorem \ref{Ann theorem}) in a wider range of situations.
\end{remark}

Let $\# \colon \ZpG \lra \ZpG$ denote the involution induced by $g \mapsto g^{-1}$. Recalling that the Cassels-Tate pairing
induces a canonical isomorphism between $\Sha_p(A_F)^\vee$ and  $\Sha_p(A^t_F)$, we immediately obtain the following corollary:

\mpar{Cor 2}
\begin{cor}\label{Cor 2}
Under the assumptions of Theorem \ref{Ann theorem} one has that the element $\calL^\#$ of $I_{G,p}^h$ annihilates
the $\bz_p[G]$-module $\Sha_p(A_F)$.
\end{cor}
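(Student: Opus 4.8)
This is an immediate formal consequence of Theorem \ref{Ann theorem}(ii) together with the $G$-equivariance of the Cassels--Tate pairing, and the plan is as follows. By Theorem \ref{Ann theorem}(ii) the element $\calL$ lies in $I_{G,p}^h$ and annihilates the $\ZpG$-module $\Sha_p(A^t_F)$. I would then record the elementary fact that if an element $x = \sum_{g \in G} a_g g$ of $\ZpG$ annihilates a $\ZpG$-module $N$, then $x^\# = \sum_{g \in G} a_g g^{-1}$ annihilates the Pontryagin dual $N^\vee$: for $f \in N^\vee$ and $n \in N$ one has $(x^\# f)(n) = \sum_g a_g (g^{-1} f)(n) = \sum_g a_g f(g n) = f(x n) = 0$, where the second equality is the definition of the contragredient action of $G$ on $N^\vee$ recalled in Section \ref{intro}. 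Taking $N = \Sha_p(A^t_F)$, this shows that $\calL^\#$ annihilates $\Sha_p(A^t_F)^\vee$.

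It therefore remains to identify $\Sha_p(A^t_F)^\vee$ with $\Sha_p(A_F)$ as $\ZpG$-modules. Since hypothesis (g) guarantees that $\Sha(A_F)$, and hence $\Sha_p(A_F)$, is finite, the Cassels--Tate pairing $\Sha_p(A_F) \times \Sha_p(A^t_F) \lra \Qp/\Zp$ is a perfect pairing which is moreover $G$-equivariant; its adjoint thus furnishes a canonical isomorphism of $\ZpG$-modules $\Sha_p(A^t_F) \cong \Sha_p(A_F)^\vee$, the contragredient action on the right corresponding to the natural Galois action on the left. Dualising and using Pontryagin biduality for the finite abelian group $\Sha_p(A_F)$ gives $\Sha_p(A^t_F)^\vee \cong \Sha_p(A_F)$ as $\ZpG$-modules, and combining this with the previous paragraph shows that $\calL^\#$ annihilates $\Sha_p(A_F)$. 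Finally, $\calL^\# \in I_{G,p}^h$ because the involution $\#$ commutes with the augmentation map $\ZpG \lra \Zp$ and hence preserves the ideal $I_{G,p}$ together with each of its powers.

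There is essentially no obstacle in this argument: the only input beyond Theorem \ref{Ann theorem}(ii) is the classical fact that the Cassels--Tate pairing is perfect and $G$-equivariant on the finite groups in question, which is precisely what the remark preceding the corollary invokes.
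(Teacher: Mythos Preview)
Your proof is correct and follows essentially the same route as the paper: the paper derives the corollary in one line from Theorem \ref{Ann theorem}(ii) together with the Cassels--Tate isomorphism $\Sha_p(A_F)^\vee\cong\Sha_p(A^t_F)$, and you simply unwind this by making explicit the elementary duality step (that $x\in\mathrm{Ann}(N)$ implies $x^\#\in\mathrm{Ann}(N^\vee)$) and the fact that $\#$ preserves powers of $I_{G,p}$. One small slip: the containment $\calL\in I_{G,p}^h$ is Theorem \ref{Ann theorem}(i), not (ii).
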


\mpar{bmw reformulation}
\section{An explicit reformulation of conjecture C$_p(A,\bz[G])$}\label{bmw reformulation}

\mpar{preliminaries}
\subsection{K-theory and refined Euler characteristics}\label{preliminaries}

Let $R$ be either $\Ze$ or $\Zp$ and, for the moment, let $G$ be any finite group. We write $K$ for the quotient field
of $R$ and let $\mathbb{E}$ be a field extension of $K$. Let $\Lambda$ be an $R$-order in $K[G]$. We recall that there is a
canonical exact sequence of algebraic $K$-groups
\mpar{K seq}
\begin{equation}\label{K seq}
K_1(\Lambda) \lra K_1(\mathbb{E}[G]) \stackrel{\partial^1_{\Lambda, \mathbb{E}}}\lra K_0(\Lambda, \EG) \lra K_0(\Lambda) \lra
 K_0(\mathbb{E}[G])
\end{equation}
where $K_0(\Lambda, \EG)$ is the relative algebraic $K$-group, as defined by Swan in \cite[p.~215]{swan}, associated to the ring inclusion $\Lambda\subseteq\EG$.

For any ring $\Sigma$ we write $\zeta(\Sigma)$ for its center.
We let $\nr_{\EG} \colon K_1(\EG) \lra \zeta(\EG)^\times$ denote the (injective) homomorphism induced by the reduced norm map.
If $\Lambda$ is a $\Ze$-order in $\QG$ we write
\begin{eqnarray*}
  \delta_G &\colon& \zeta(\RG)^\times \lra K_0(\Lambda, \RG), \\
  \delta_{G,p} &\colon& \zeta(\CpG)^\times \lra K_0(\Lambda_p, \CpG)
\end{eqnarray*}
for the extended boundary homomorphisms as defined in \cite[Sec.~4.2]{bufl01}.
Recall that
\[
\delta_G \circ \nr_\RG = \partial^1_{\Lambda, \Re}, \quad \delta_{G,p} \circ \nr_{\CpG} = \partial^1_{\Lambda_p, \Cp}.
\]
By the general construction described in \cite[Prop.~2.5]{bufl01} (and \cite[Lem.~5.1]{additivity}) each pair
$(C^\bullet, \lambda)$ consisting of a complex $C^\bullet \in D^p(\Lambda_p)$ and an isomorphism of $\CpG$-modules
$$
\lambda \colon  \Cp \otimes_{\Zp}  \left( \bigoplus_{i\in\Ze}H^{2i}(C^\bullet)\right) \lra \Cp \otimes_{\Zp} \left( \bigoplus_{i\in\Ze}H^{2i+1}(C^\bullet) \right)
$$ 
gives rise to a refined Euler characteristic $\chi_{G,p}(C^\bullet, \lambda)
\in K_0(\Lambda_p, \CpG)$. For an explicit example of the computation of  $\chi_{G,p}(C^\bullet, \lambda)$ in a special case,
which is also relevant for the computations in this paper, we refer the reader to \cite[Sec.~3]{bleytwo}.

It is well known that $\partial^1_{\Lambda_p, \Cp}$ is onto and that $\nr_{\CpG}$ is an isomorphism.
We therefore deduce from (\ref{K seq}) that
\mpar{K iso}
\begin{equation}\label{K iso}
K_0(\Lambda_p, \CpG) \simeq \zeta(\CpG)^\times / \nr_{\CpG} \left( K_1(\Lambda_p) \right).
\end{equation}
Since $\Lambda_p$ is semilocal, we can replace $K_1(\Lambda_p)$ by $\Lambda_p^\times$ in (\ref{K iso}). Moreover, it follows from (\ref{K iso}) that for
an element $\xi \in \zeta(\CpG)^\times$ one has that
$\delta_{G, p}(\xi) = 0$ if and only if $\xi \in \nr_{\CpG} \left( \Lambda_p^\times \right)$.
Finally, if $G$ is abelian, we have that
\[
K_0(\Lambda_p, \CpG) \simeq \CpG^\times / \Lambda_p^\times,
\]
and hence $\delta_{G, p}(\xi) = 0$ if and only if $\xi \in \Lambda_p^\times$.

In this context we also recall \cite[Lem.~2.5]{bleyone}. We naturally interpret $K_0(\Lambda, \bq[G])$ and $K_0(\Lambda_p, \QpG)$ as subgroups of $K_0(\Lambda, \br[G])$ and $K_0(\Lambda_p, \CpG)$
respectively, and recall that if $\xi \in \zeta(\br[G])^\times$, then
\[
\delta_{G}(\xi) \in K_0(\Lambda, \bq[G]) \iff \xi \in \zeta(\bq[G])^\times
\]
while if $\xi \in \zeta(\CpG)^\times$, then
\[
\delta_{G,p}(\xi) \in K_0(\Lambda_p, \QpG) \iff \xi \in \zeta(\QpG)^\times.
\]

We finally recall that, for any isomorphism $j: \CC\cong \CC_p$ , there is an induced composite homomorphism of abelian groups
\[ j_{G,*}: K_0\bigl(\Lambda,\RR[G]\bigr) \to K_0\bigl(\Lambda,\CC[G]\bigr)\cong K_0\bigl(\Lambda,\CC_p[G]\bigr) \to K_0\bigl(\Lambda_p,\CC_p[G]\bigr)\]
(where the first and third arrows are induced by the inclusions $\RR[G]\subset \CC[G]$ and $\Lambda\subset \Lambda_p$ respectively). We also write
$j_*:\zeta(\bc[G])^\times\to \zeta(\bc_p[G])^\times$ for the obvious map induced by $j$, and note that it is straightforward to check that one has
\[j_{G,*}\circ\delta_G=\delta_{G,p}\circ j_*.\]

\mpar{relevant bmw results}
\subsection{Relevant results from \cite{bmw}}\label{relevant bmw results}



Conjecture C$(A,\bz[G])$ is formulated as the vanishing of the `equivariant Tamagawa number' $T\Omega\bigl(h^1(A_{F})(1),\ZZ[G]\bigr)$
of $K_0(\bz[G],\br[G])$ that is defined in \cite[Conj. 4]{bufl01} and constructed (unconditionally under the assumed validity of hypothesis (g)) via the formalism
of virtual objects from the various canonical comparison
morphisms between the relevant realisations and cohomology spaces associated to the motive $h^1(A_{F})(1)$ (for more details see \cite{bufl01}). 

Motivated by work of Bloch and Kato, and in order to isolate the main arithmetic difficulties involved in making $T\Omega\bigl(h^1(A_{F})(1),\ZZ[G]\bigr)$
explicit, the approach of \cite{bmw} relies upon the definition of a suitable (global) finite support cohomology complex $\Cf:=R\Gamma_f(k, \TpFA)$
(see \cite[Sec.~4.2]{bmw}).
Under the hypotheses of \cite{bmw} the complex $\Cf$ is perfect and acyclic
outside degrees one and two. Moreover, there are canonical identifications of $H^1(\Cf)$ and $H^2(\Cf)$
with $A^t(F)_p$ and $\Sel_p(A_F)^\vee$ respectively (see \cite[Lemma 4.1]{bmw}). Hence, for a given isomorphism $j:\bc\to\bc_p$, the $\Ce$-linear extension
of the N\'eron-Tate height pairing of $A$ defined relative to the field $F$ induces a canonical trivialisation
\begin{multline*}
\lambda^{{\rm NT},j}_{A,F}\colon \CC_p\otimes_{\ZZ_p} H^1\bigl(C^{f,\bullet}_{A,F}\bigr) \cong \CC_p\otimes_{\ZZ_p}A^t(F)_p \\\cong \CC_p\otimes_{\CC,j}(\CC\otimes_\ZZ A^t(F))
 \cong \CC_p\otimes_{\CC,j}\Hom_{\CC}\bigl(\CC\otimes_{\ZZ}A(F),\CC\bigr)\\ \cong \CC_p\otimes_{\ZZ_p}\Hom_{\ZZ_p}\bigl(A(F)_p,\ZZ_p\bigr) \cong \CC_p\otimes_{\ZZ_p}H^2\bigl(C^{f,\bullet}_{A,F}\bigr).\end{multline*}
It is finally proved in \cite[Th. 5.1]{bmw} that 
\begin{equation}\label{tomega}j_{G,*}\bigl(T\Omega\bigl(h^1(A_{/F})(1),\ZZ[G]\bigr)\bigr)=\delta_{G,p}\left( j_*(\calL^*_{A, F/k}) \right)+\chi_{G,p}\left(\Cf,(\lambda^{{\rm NT},j}_{A,F})^{-1} \right),\end{equation}
or equivalently that conjecture C$_{p,j}(A,\bz[G])$ is valid if and only if

\mpar{reform eq}
\begin{equation}\label{reform eq}
\delta_{G,p}\left( j_*(\calL^*_{A, F/k}) \right) = -\chi_{G,p}\left(\Cf,(\lambda^{{\rm NT},j}_{A,F})^{-1} \right).
\end{equation}

In order to prove our results stated in Section \ref{statements} we must therefore compute the refined Euler characteristic
$\chi_{G,p}\left(\Cf,(\lambda^{{\rm NT},j}_{A,F})^{-1} \right)$ in terms of the heights of the chosen sets of points $\calP$ and $\calP^t$.

\mpar{Proofs}
\section{The proofs}\label{Proofs}

\mpar{Choice of points}
\subsection{The proof of Proposition \ref{MW iso}}\label{Choice of points}

In this subsection we will prove Proposition \ref{MW iso}. The existence of global points $\Ptj$ and $\Pttj$
such that (\ref{global points}) holds is then an immediate consequence of Roiter's lemma (see \cite[(31.6)]{curtisr}).


To ease notation we set $H^1 := H^1(\Cf)=A^t(F)_p$ and $H^2 := H^2(\Cf)=\Sel_p(A_F)^\vee$. We recall that, for any intermediate field $L$ of $F/k$, we may and will use the
relevant canonical short exact sequence of the form (\ref{Sel ses}) to identify $(\Sel_p(A_L)^\vee)_{{\rm tor}}$ with $\Sha_p(A_L)^\vee$ and
$(\Sel_p(A_L)^\vee)_{{\rm tf}}$ with $A(L)_p^*$. 

Under the assumed validity of hypotheses (a)-(e), the result of \cite[Prop.~3.1]{bmwselmer} directly combines with hypothesis (h) to imply that,
for every non-trivial subgroup $J$ of $G$, the Tate cohomology group
$\widehat{H}^{-1}( J, H^2_{{\rm tf}})$ vanishes and the module $(H^2)_J$ is torsionfree.
By the definition of Tate cohomology,
we have that the finite group $\widehat{H}^{-1}( J, H^2)$ identifies with a submodule of $(H^2)_J$ and therefore vanishes too.



Furthermore, since the complex $\Cf$ is perfect and acyclic outside degrees 1 and 2, for each subgroup $J$ of $G$ the group $\widehat{H}^{1}( J, H^1)$ is
isomorphic to $\widehat{H}^{-1}( J, H^2)$. In addition, since $G$ is cyclic, the Tate cohomology of each $J$ is periodic of order 2 and so
$\widehat{H}^{-1}( J, H^1)$ also vanishes.

We next note that, since $G$ is a $p$-group, hypothesis (a) implies that $A^t(F)_p=H^1$ is torsion-free.

We now apply the main result \cite[Th. 2.4]{yakovlev} of Yakovlev to see that both $A^t(F)_p=H^1$ and $A(F)_p^*=H^2_{{\rm tf}}$ are $\ZpG$-permutation
modules, that is, that there exist isomorphisms of the form
\[
A^t(F)_p \simeq \bigoplus_{J\leq G}\IZ_p[G/J]^{<r_J>}, \quad A(F)_p^* \simeq \bigoplus_{J\leq G}\IZ_p[G/J]^{<s_J>}
\]
for some sets of non-negative integers $\{r_J\}$ and $\{s_J\}$.
But the N\'eron-Tate height pairing induces an isomorphism of $\CpG$-modules between $\Cp \tensorZp A^t(F)_p$ and
$\Cp \tensorZp A(F)_p^*$ and so by rank considerations we find that $r_J=s_J=:m_J$ for every $J$.
Finally, it is easy to see that the $\Zp$-linear dual of a permutation module is again a permutation module of the same
form. Therefore the canonical isomorphism $A(F)_p^{**} \simeq A(F)_p$ shows that one also has that
\[
A(F)_p \simeq \bigoplus_{J\leq G}\IZ_p[G/J]^{<m_J>}.
\]
\qed




\mpar{Proof of ZpG theorem}
\subsection{The proof of Theorem \ref{ZpG theorem}}\label{Proof of ZpG theorem}

Recall that in addition to our running hypothesis (a) - (h) we also assume that $\Sha_p(A_F) = 0$. In particular,
we identify $\Sel_p(A_F)^\vee$ with $A(F)_p^*$ via the canonical map in (\ref{Sel ses}).

We fix an isomorphism of fields $j:\bc\to\bc_p$. From (\ref{reform eq}) and the discussion in \S \ref{preliminaries} it is clear that it will be enough to show that
\mpar{chiGp eq}
\begin{equation}
  \label{chiGp eq}
-\chi_{G,p}\left(\Cf,(\lambda^{{\rm NT},j}_{A,F})^{-1} \right) =
\delta_{G,p} \left( \sum_{\psi \in \widehat{G}} j(\lambda_\psi(\calP, \calP^t)) \epsilon_\psi(\Phi)  \delta_\psi  e_\psi \right)
\end{equation} (we recall that, since we have assumed the validity of C$(A,\bq[G])$, one actually has that the validity of C$_{p,j}(A,\bz[G])$ is equivalent to the
validity of C$_{p}(A,\bz[G])$).

We begin by defining, for every pair $(s,i)$, an element $P^*_{(s,i)} \in A(F)_p^*$ by setting, for every pair $(t,j)$ and element $\tau$ of $G$,
\begin{equation}\label{dualpoints}
P^*_{(s,i)} (\tau P_{(t,j)}) =
\begin{cases}
  1, & \text{ if } s=t, i=j \text{ and } \tau \in H_s \\
  0, & \text{ otherwise.}
\end{cases}
\end{equation}

\mpar{dual basis}
\begin{lem}\label{dual basis}
$A(F)_p^* = \bigoplus_{(s,i)} \Zp[G/H_s] P_{(s,i)}^*$ with each summand $\Zp[G/H_s] P_{(s,i)}^*$ isomorphic to $\Zp[G/H_s]$.
\end{lem}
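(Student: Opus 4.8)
The claim is that the dual points $P^*_{(s,i)}$ defined by the matching condition (\ref{dualpoints}) form a $\Zp[G]$-basis of $A(F)_p^*$ in the permutation shape $\bigoplus_{(s,i)}\Zp[G/H_s]P^*_{(s,i)}$. The plan is to build the proof directly from the decomposition (\ref{global points}) of $A(F)_p$ together with the standard behaviour of $\Zp$-linear duals under direct sums and base change along $\Zp \to \Zp[G/H]$.

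**Main steps.** First I would record that $\Zp$-linear duality commutes with finite direct sums, so that $A(F)_p^* = \bigoplus_{(t,j)} \bigl(\Zp[G/H_t]P_{(t,j)}\bigr)^*$, and that under the identification $\Zp[G/H_t]P_{(t,j)} \cong \Zp[G/H_t]$ of (\ref{global points}) the dual of the summand indexed by $(t,j)$ is $\Hom_{\Zp}(\Zp[G/H_t],\Zp)$ with its contragredient $G$-action. Second, the key local computation: for a finite group quotient $\Gamma = G/H$, the $\Zp$-module $\Hom_{\Zp}(\Zp[\Gamma],\Zp)$ with contragredient action is a free rank-one $\Zp[\Gamma]$-module, a canonical generator being the functional $\varphi$ dual to the identity coset (i.e. $\varphi(\gamma)=\delta_{\gamma,1}$ for $\gamma\in\Gamma$); indeed $(g\varphi)(\gamma) = \varphi(g^{-1}\gamma) = \delta_{\gamma,g}$, so $\{g\varphi : g\in\Gamma\}$ is precisely the dual basis of $\{g : g\in\Gamma\}$, whence $\Zp[\Gamma]\varphi = \Hom_{\Zp}(\Zp[\Gamma],\Zp)$ and this module is free of rank one. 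Third, I would check that, under the identifications above, the element $P^*_{(s,i)}$ defined in (\ref{dualpoints}) is carried exactly to the functional $\varphi$ of the second step inside the summand dual to $\Zp[G/H_s]P_{(s,i)}$ (and to $0$ on all other summands): this is a direct unwinding of (\ref{dualpoints}), since the condition ``$s=t$, $i=j$, $\tau\in H_s$'' says precisely that $P^*_{(s,i)}$ is supported on the $(s,i)$-summand and there takes the value $1$ on the trivial coset $H_s$ and $0$ on the other cosets. Combining the three steps gives $A(F)_p^* = \bigoplus_{(s,i)}\Zp[G/H_s]P^*_{(s,i)}$ with each summand free of rank one over $\Zp[G/H_s]$, as asserted.

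**Expected obstacle.** None of the individual ingredients is deep; the only place requiring genuine care is bookkeeping of the contragredient $G$-action — in particular making sure that the action on $P^*_{(s,i)}$ induced by $(\gamma f)(m)=f(\gamma^{-1}m)$ matches the left-regular action on $\Zp[G/H_s]$ under the chosen identifications, and that the map $\Zp[G/H_s]\to \Zp[G/H_s]P^*_{(s,i)}$, $x\mapsto x P^*_{(s,i)}$, is well-defined (i.e. $H_s$ really does act trivially on $P^*_{(s,i)}$, which follows because $P^*_{(s,i)}$ only sees $\tau$ modulo $H_s$). So the main, and essentially only, task is to verify this $G$-equivariance carefully; once that is in place the result is immediate from the direct-sum decomposition (\ref{global points}) of $A(F)_p$.
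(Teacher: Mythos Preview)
Your proposal is correct and is essentially the paper's own argument, just phrased a bit more structurally: the paper directly computes $(\gamma P^*_{(s,i)})(\tau P_{(t,j)}) = 1 \iff s=t,\ i=j,\ \gamma\equiv\tau\ (\mathrm{mod}\ H_s)$, deduces that $H_s$ fixes $P^*_{(s,i)}$, and observes that the $\gamma P^*_{(s,i)}$ (with $\gamma$ running over $G/H_s$) form the $\Zp$-dual basis to the $\tau P_{(t,j)}$. Your ``key local computation'' $(g\varphi)(\gamma)=\delta_{\gamma,g}$ is exactly this same identity, so the two proofs coincide in substance.
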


\begin{proof}
If $\gamma \in G$, then
\mpar{dual basis eq}
\begin{eqnarray}\label{dual basis eq}
&&\left( \gamma P_{(s,i)}^* \right) \left( \tau P_{(t,j)} \right) =  P_{(s,i)}^*(\gamma^{-1}\tau  P_{(t,j)} ) = 1 \nonumber \\
&\iff& s=t, i=j \text{ and } \gamma \equiv \tau (\mod H_s). 
\end{eqnarray}
Hence we have $ \gamma P_{(s,i)}^* =  P_{(s,i)}^* $ for $\gamma \in H_s$. Moreover, it easily follows that the
maps $ \gamma P_{(s,i)}^*$ with $\gamma \in G/H_s$ form a $\Zp$-basis of $A(F)_p^*$ (actually the $\Zp$-dual basis of
$\tau P_{(t,j)}$ with $\tau \in G/H_t$).
\end{proof}

We now proceed to fix an explicit $2$-syzygy of the form (\ref{inexplicitsyzygy}).  For this purpose, we first recall
that $H_t = \langle \sigma^{p^t} \rangle$. For each pair $(t,j)$ corresponding to the subgroup $H_t$ of $G$
and $j\in [m_t]$ we hence have a $2$-extension
\[
0 \lra \Zp[G/H_t] \stackrel{\iota_{t}}\lra \ZpG \stackrel{\sigma^{p^t} - 1}\lra \ZpG \stackrel{\pi_{t,j}}\lra \Zp[G/H_t] P^*_{(t,j)} \lra 0.
\]
In this sequence we let $\iota_t$ denote the (well-defined) map which sends the image of
an element $x\in\bz_p[G]$ under the natural surjection $\bz_p[G]\to\bz_p[G/H_t]$ to the element ${\rm Tr}_{H_t}x$ of $\bz_p[G]$,
while $\pi_{t,j}$ sends the element 1 of $\ZpG$ to the element $P^*_{(t,j)}\in A(F)^*_p$ defined in (\ref{dualpoints}).
Lemma \ref{dual basis} then implies that, summing over all pairs $(t,j)$ we obtain a $2$-extension
\mpar{syzygy}
\[
  0 \lra M \stackrel{\iota}\lra F^0 \stackrel{\Theta}\lra F^1 \stackrel{\pi}\lra A(F)^*_p \lra 0.
\]
with
\begin{eqnarray*}
  F^0 = F^1 = X &:=& \bigoplus_{(t,j)} \ZpG, \\
             M &:=&  \bigoplus_{(t,j)} \Zp[G/H_t].
\end{eqnarray*}

We now recall that we have a canonical isomorphism
\[
\Ext_{\ZpG}^2(A(F)_p^*, A^t(F)_p) \simeq \Hom_{\ZpG} (M, A^t(F)_p) / \iota_*( \Hom_{\ZpG} (F^0, A^t(F)_p) )
\]
under which an element $\phi$ of $\Hom_{\ZpG} (M, A^t(F)_p)$ corresponds to the element $\epsilon(\phi)$ of
$\Ext_{\ZpG}^2(A(F)_p^*, A^t(F)_p)$ which has the bottom row of the commutative diagram with exact rows

\begin{equation}\label{pushout}
\begin{CD} 0 @>  >> M @> \iota >> X @> \Theta >> X @> \pi >> A(F)^*_p @>  >> 0\\
@. @V \phi VV  @V VV  @\vert @\vert @. \\
0 @>  >> A^t(F)_p @>  >> X(\phi) @>  >> F^1 @> \pi >> A(F)^*_p @>  >> 0,
\end{CD}
\end{equation}
as a representative. In this diagram $X(\phi)$ is defined as the push-out of $\iota$ and $\phi$.
We now proceed to prove that, when considering perfect elements $\epsilon(\phi)$ of $\Ext_{\ZpG}^2(A(F)_p^*, A^t(F)_p)$,
one may without loss of generality restrict attention to a special class of elements $\phi$ of $\Hom_{\ZpG} (M, A^t(F)_p)$.

\mpar{extvanishes}
\begin{lem}\label{extvanishes}
For all subgroups $J$ of $G$ one has
\begin{itemize}
\item [(i)] $\Ext_{\ZpG}^2(\ZpG, \Zp[G/J]) = 0$,
\item [(ii)] $\Ext_{\ZpG}^2(\Zp[G/J], \Zp[G]) = 0$
\end{itemize}
\end{lem}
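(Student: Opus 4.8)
\textbf{Proof plan for Lemma \ref{extvanishes}.}

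The plan is to exploit the periodicity of cohomology over the cyclic $p$-group $G$, together with the structure of the modules in question. First I would recall that for the cyclic group $G$ (and likewise any subgroup $J$, which is again cyclic) one has a standard $2$-periodic free resolution of the trivial module, so that for any $\Zp[G]$-modules $M, N$ the group $\Ext^2_{\ZpG}(M,N)$ is, up to the usual dimension-shifting identifications, controlled by Tate cohomology. Concretely, $\Ext^2_{\ZpG}(\Zp, N) \cong \widehat{H}^0(G,N) = N^G/\mathrm{Tr}_G(N)$, and more generally there are analogous expressions after inducing/coinducing along $J \le G$. The point of (i) and (ii) is that in each case the relevant module is cohomologically trivial or the relevant Tate cohomology group vanishes.

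For part (i), I would use the fact that $\Zp[G]$ is $\Zp[G]$-free, hence $\Ext^i_{\ZpG}(\Zp[G], -) = 0$ for all $i>0$; this is immediate from projectivity of $\Zp[G]$ and does not even need the cyclic hypothesis. For part (ii), the plan is to apply the change-of-rings/Shapiro-type isomorphism
\[
\Ext^2_{\ZpG}(\Zp[G/J], \Zp[G]) \cong \Ext^2_{\Zp[J]}(\Zp, \Zp[G]) \cong \widehat{H}^2(J, \Zp[G]),
\]
the first isomorphism because $\Zp[G/J] = \Zp[G] \otimes_{\Zp[J]} \Zp$ is induced from the trivial $\Zp[J]$-module and $\Ext$ over $\ZpG$ of an induced module agrees with $\Ext$ over $\Zp[J]$ of the original, and the second by periodicity of the cohomology of the cyclic group $J$ (so $\widehat{H}^2(J,-) \cong \widehat{H}^0(J,-)$). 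Now $\Zp[G]$, restricted to $J$, is a free $\Zp[J]$-module (of rank $[G:J]$), hence cohomologically trivial, so $\widehat{H}^2(J, \Zp[G]) = 0$. This gives the claim.

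I do not expect any genuine obstacle here: both statements are essentially formal consequences of (a) projectivity of $\Zp[G]$ as a module over itself and (b) the $2$-periodicity of Tate cohomology of cyclic groups, which applies since $G$, and hence every subgroup $J$, is cyclic. The only mild care needed is to get the Shapiro isomorphism in the right variance — induction in the first argument of $\Ext$ corresponds to restriction of the coefficient module in the second — and to note that both of these $\Ext$-groups can be computed via a bounded projective resolution since $\Zp[G/J]$ admits one (the $2$-periodic complex $\cdots \to \ZpG \xrightarrow{\sigma^{p^t}-1} \ZpG \xrightarrow{\mathrm{Tr}_{H_t}} \ZpG \to \Zp[G/J]$ already appearing in the construction of the syzygy), so there is no subtlety about which derived-functor formalism is being used.
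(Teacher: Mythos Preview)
Your argument is correct. For (i) you and the paper agree verbatim: $\ZpG$ is projective over itself, so higher $\Ext$ vanishes.

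For (ii) you take a genuinely different route from the paper. The paper uses the identification (valid since $\Zp[G/J]$ is $\Zp$-free)
\[
\Ext^{2}_{\ZpG}(\Zp[G/J],\Zp[G]) \cong H^{2}\bigl(G,\Hom_{\Zp}(\Zp[G/J],\Zp[G])\bigr)
\]
and then argues that the $G$-module $\Hom_{\Zp}(\Zp[G/J],\Zp[G])$ is cohomologically trivial. You instead invoke Eckmann--Shapiro in the first variable, identifying $\Zp[G/J]=\Ind_J^G\Zp$, to obtain $\Ext^{2}_{\ZpG}(\Zp[G/J],\Zp[G])\cong\Ext^{2}_{\Zp[J]}(\Zp,\Zp[G])=H^{2}(J,\Zp[G])$, and conclude by noting that $\Zp[G]$ is free over $\Zp[J]$. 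Your route is slightly more direct: the cohomological triviality you need is immediate, whereas the paper's assertion that $\Hom_{\Zp}(\Zp[G/J],\Zp[G])$ is $G$-cohomologically trivial, while true, requires a small extra step (e.g.\ the ``untwisting'' isomorphism $M\otimes_{\Zp}\Zp[G]\cong M_0\otimes_{\Zp}\Zp[G]$). One minor remark: your appeal to $2$-periodicity of $\widehat{H}^*(J,-)$ is harmless but unnecessary---the vanishing of $H^2(J,\Zp[G])$ follows straight from freeness, without passing through $\widehat{H}^0$.
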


\begin{proof} Claim (i) is clear. Concerning claim (ii), we first note that since $\Zp[G/J]$ is
$\bz_p$-torsion-free, there is an isomorphism of the form
\[
{\rm Ext}^{2}_{\bz_p[G]}(\bz_p[G/J],\bz_p[G]) \cong H^{2}(G,{\rm Hom}_{\bz_p}(\bz_p[G/J],\bz_p[G])).
\]
But the $G$-module ${\rm Hom}_{\bz_p}(\bz_p[G/J],\bz_p[G])$ is cohomologically-trivial and therefore ${\rm Ext}^{2}_{\bz_p[G]}(\bz_p[G/J],\bz_p[G])$ vanishes, as required.
%
\end{proof}

Lemma \ref{extvanishes} now implies that we can without loss of generality restrict attention to those
elements $\phi$ of $\Hom_{\ZpG} (M, A^t(F)_p)$ which satisfy (P2) and, in addition,
by the argument of \cite[Lemma~4.3]{bleytwo}, which are furthermore injective.

\mpar{ext iso}
\begin{lem}\label{ext iso}
Suppose that $\phi\in\Hom_{\ZpG} (M, A^t(F)_p)$ has all of the properties described in the previous paragraph. Then the
element $\epsilon(\phi)$ of ${ \rm Ext}^2_{\bz_p[G]}(A(F)_p^*,A^t(F)_p)$ is perfect if
and only if $\phi$ is an isomorphism.
\end{lem}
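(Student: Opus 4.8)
The plan is to analyse the four-term exact sequence in the bottom row of the pushout diagram (\ref{pushout}), namely
\[
0 \lra A^t(F)_p \lra X(\phi) \lra F^1 \lra A(F)^*_p \lra 0,
\]
and to recall that, by definition, the class $\epsilon(\phi)$ is \emph{perfect} precisely when it admits a representative Yoneda $2$-extension whose two middle terms are perfect (equivalently, cohomologically trivial) $\Zp[G]$-modules. Since $F^1$ is free, the only issue is whether $X(\phi)$ is perfect. So the lemma reduces to showing: with $\phi$ injective and satisfying (P2), the module $X(\phi)$ is cohomologically trivial if and only if $\phi$ is an isomorphism. First I would split off the free part corresponding to the index set $\{(n,j)\}$: property (P2) says $\phi$ is the identity on the $\Zp[G]$-summands indexed by $(n,j)$, and the corresponding rows of (\ref{pushout}) split off a trivial contribution, so one may assume $m_n = 0$ and work only with the summands $\Zp[G/H_t]$, $t<n$, and the matrix $\left(\Phi_{(t,j),(s,i)}\right)_{t,s<n}$.

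Next I would make $X(\phi)$ concrete. From the pushout description, $X(\phi) = \bigl(A^t(F)_p \oplus F^0\bigr)/\{(\phi(m), -\iota(m)) : m \in M\}$, and since $\iota$ is injective with image contained in $F^0$, there is a short exact sequence $0 \to \cok(\iota) \to X(\phi) \to A^t(F)_p \to 0$ — no, more usefully: pushing out along an injection, $X(\phi)$ sits in $0 \to F^0 \to X(\phi)\oplus(\text{something}) \to \cdots$; the cleanest route is the exact sequence
\[
0 \lra M \xrightarrow{(\phi, -\iota)} A^t(F)_p \oplus F^0 \lra X(\phi) \lra 0.
\]
Since $F^0$ is free, $X(\phi)$ is cohomologically trivial iff the mapping cone of $(\phi,-\iota)\colon M \to A^t(F)_p\oplus F^0$ is, and since $M = \bigoplus \Zp[G/H_t]$ is not free but \emph{is} a permutation module, I would instead argue directly: $X(\phi)$ is cohomologically trivial iff, for every subgroup $J\le G$, the Tate cohomology $\widehat H^\ast(J, X(\phi))$ vanishes. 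Using the long exact Tate sequence of the displayed short exact sequence together with $\widehat H^\ast(J,F^0)=0$, this holds iff the connecting maps $\widehat H^i(J, A^t(F)_p) \to \widehat H^{i+1}(J,M)$ induced by $\phi$ are isomorphisms for all $i$ and all $J$. Because $G$ is cyclic, $A^t(F)_p$ and $M$ are both permutation modules (Proposition \ref{MW iso} and its proof, together with Lemma \ref{dual basis}), of the \emph{same} isomorphism type $\bigoplus_J \Zp[G/J]^{<m_J>}$ by construction of $M$; hence their Tate cohomologies over each $J$ are abstractly isomorphic finite groups. A map of permutation modules of equal type inducing isomorphisms on all Tate cohomology groups over all subgroups must itself be an isomorphism — this is where I would invoke Nakayama-type or Yakovlev-style reasoning (the same circle of ideas used in the proof of Proposition \ref{MW iso}). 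Conversely, if $\phi$ is an isomorphism then $X(\phi)\cong F^0$ via the pushout, so it is free, hence perfect; this direction is immediate.

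The one subtlety — and the step I expect to be the main obstacle — is the direction ``perfect $\Rightarrow$ $\phi$ an isomorphism''. One knows from injectivity of $\phi$ and $\iota$ that $\cok(\phi)$ and $\cok(\iota)$ are finite, and a diagram chase in (\ref{pushout}) identifies $\cok(\phi)$ with $\cok(\iota \text{-part of } X(\phi))$; perfection forces $X(\phi)$ to be cohomologically trivial, and I must convert this into the vanishing of the finite module $\cok(\phi)$. The clean way is: a perfect complex concentrated in one degree is a projective module, so cohomological triviality of $X(\phi)$ plus the short exact sequence $0\to A^t(F)_p \to X(\phi) \to F^1/(\text{image}) \to \cdots$ — better, I would compare $X(\phi)$ with the genuine extension one gets from $\iota$ itself (the case $\phi = \iota^{-1}$ is not available since $M\not\cong F^0$, so instead) — use that $\cok(\iota) = F^0/\iota(M)$ injects into $F^1$ with finite cokernel $A(F)^*_p$ finite?—no, $A(F)^*_p$ is free of positive rank, so $\cok(\iota)$ has the same rank as $F^1$. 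Granting the homological bookkeeping, the essential finite-group statement is: if $\phi\colon M \hookrightarrow A^t(F)_p$ is an injection of permutation $\Zp[G]$-modules of the same type with $X(\phi)$ cohomologically trivial, then the finite quotient $\cok(\phi)$ has trivial $J$-cohomology for all $J$ \emph{and} trivial $J$-coinvariants-torsion, which for a finite module over the cyclic group $G$ forces $\cok(\phi)=0$. I would cite \cite[Lemma~4.3]{bleytwo} and the cohomological-triviality criteria already used above to close this, and then reassemble the split-off $(n,j)$-part to conclude that the full $\phi$ is bijective, i.e. property (P1) holds, exactly when $\epsilon(\phi)$ is perfect. \qed
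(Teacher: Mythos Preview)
Your overall strategy is sound---reduce perfection of $\epsilon(\phi)$ to cohomological triviality of $X(\phi)$, split off the free summands via (P2), and then analyse what remains---but the decisive step is left as a black box, and the route you take to get there is more circuitous than necessary.

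First, a simplification. Rather than working with the sequence $0 \to M \xrightarrow{(\phi,-\iota)} A^t(F)_p \oplus F^0 \to X(\phi) \to 0$ and its long exact Tate sequence, observe that since $\phi$ is injective the pushout square yields directly a short exact sequence
\[
0 \lra F^0 \lra X(\phi) \lra \cok(\phi) \lra 0.
\]
As $F^0$ is free, $X(\phi)$ is cohomologically trivial if and only if $\cok(\phi)$ is. This is how the paper proceeds, and it avoids your detour through ``connecting maps $\widehat H^i(J, A^t(F)_p) \to \widehat H^{i+1}(J,M)$'' (which, incidentally, are not connecting maps: from your short exact sequence the relevant map is $\phi_*\colon \widehat H^i(J,M)\to\widehat H^i(J,A^t(F)_p)$, in the same degree).

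Second, and this is the genuine gap: your assertion that ``a map of permutation modules of equal type inducing isomorphisms on all Tate cohomology groups over all subgroups must itself be an isomorphism'' is \emph{false} without further input. Multiplication by $p$ on $\Zp[G]$ induces the identity on the (zero) Tate cohomology over every subgroup but is not surjective. You have already excluded free summands via (P2), and in that restricted setting the claim is in fact true, but an appeal to ``Nakayama-type or Yakovlev-style reasoning'' does not constitute a proof. The missing observation---and the heart of the paper's argument---is this: once $m_n=0$, every remaining summand $\Zp[G/H_t]$ has $t<n$, hence $H_t\supseteq H_{n-1}$, so the unique subgroup $H_{n-1}$ of order $p$ acts \emph{trivially} on both the source and target of $\phi'$, and therefore on $\cok(\phi')$. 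Cohomological triviality over $H_{n-1}$ then gives
\[
\cok(\phi')/p\,\cok(\phi') \;=\; \widehat H^0\bigl(H_{n-1},\cok(\phi')\bigr) \;=\; 0,
\]
whence $\cok(\phi')=0$ by Nakayama (the cokernel is finite since $\phi$ is an injection between $\Zp$-lattices of equal rank). This single observation replaces your entire final paragraph.
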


\begin{proof}
The fact that $\phi$ restricts to send an element $x_{(n,j)}$ of the $(n,j)$-th direct summand $\bz_p[G]$ to $x_{(n,j)}P^t_{(n,j)}$ immediately implies that $\cok(\phi) = \cok(\phi')$ where
\[
\phi': \bigoplus_{(t,j), t < n} \Zp[G/H_t]\to\bigoplus_{(t,j), t < n} \Zp[G/H_t]P^t_{(t,j)}
\]
is the map obtained by restriction of $\phi$. Since $\phi$ is injective, the commutative diagram (\ref{pushout}) implies that the $2$-extension $\epsilon(\phi)$
is perfect if and only $\cok(\phi) = \cok(\phi')$ is cohomologically trivial. Note that  $H_{n-1}$ clearly acts trivially on $\cok(\phi')$.
So, if $\cok(\phi')$ is cohomologically trivial, then
\[
\cok(\phi') / p\cok(\phi') =  \widehat H^0 (H_{n-1}, \cok(\phi')) =  0.
\]
It then follows that $\cok(\phi')$ must itself vanish, as required.
\end{proof}

We henceforth fix $\Phi \in \Hom_{\ZpG}(M, A^t(F)_p)$ representing the element $\delta_{A,F,p} \in \Ext_{\ZpG}^2( \Sel_p(A_F)^\vee, A^t(F)_p)$
which is specified by $\Cf$. Recall that by our current assumption $\Sha_p(A_F) = 0$ we identify
$\Sel_p(A_F)^\vee$ and $A(F)_p^*$. By Lemmas \ref{extvanishes} and \ref{ext iso} we may and will assume that $\Phi$ is an
isomorphism and furthermore that the matrix defined  in (\ref{definingphi}) is of the form (\ref{Phi mat}).

Having justified our choice of homomorphism $\Phi$, we now proceed to compute the term $-\chi_{G,p}\left(\Cf,(\lambda^{{\rm NT},j}_{A,F})^{-1} \right)$ that occurs
in (\ref{chiGp eq}) via a generalisation of the computations done in \cite[Sec.~4]{bleytwo}. For brevity, given any $\bz_p[G]$-module $N$, resp.
$\bz_p[G]$-homomorphism $h$, we set $N_{\bc_p}:=\bc_p\otimes_{\bz_p}N$, resp. $h_{\bc_p}:=\bc_p\otimes_{\bz_p}h$.

For any choice of respective splittings \[s_1:X_{\bc_p}\to M_{\bc_p}\oplus{\rm im}(\Theta)_{\bc_p}\] and
\[s_2:X_{\bc_p}\to{\rm ker}(\pi)_{\bc_p}\oplus A(F)^*_{\bc_p}\] of the short exact sequences induced by scalar extension of
\[ 0\to M\stackrel{\iota}\lra X\stackrel{\Theta}\lra{\rm im}(\Theta)\to 0\] and \[ 0\to{\rm ker}(\pi)\to X\stackrel{\pi}\lra A(F)_p^*\to 0\] respectively,
we write $\langle \lambda^{{\rm NT},j}_{A,F}\circ\Phi_{\bc_p},\Theta,s_1,s_2\rangle$ for the composite $\bc_p[G]$-automorphism of $X_{\bc_p}$ given by

\begin{eqnarray*}
X_\Cp &\stackrel{s_1}\lra& M_{\bc_p}\oplus{\rm im}(\Theta)_{\bc_p} \\
&\stackrel{(\Phi_{\bc_p}, id)}\lra& A^t(F)_{\bc_p}\oplus{\rm im}(\Theta)_{\bc_p} \\
&\stackrel{(\lambda^{{\rm NT},j}_{A,F}, id)}\lra& A(F)^*_{\bc_p}\oplus{\rm im}(\Theta)_{\bc_p} \\
&=& A(F)^*_{\bc_p}\oplus{\rm ker}(\pi)_{\bc_p} \\
&\stackrel{s_2^{-1}}\lra& X_\Cp.
\end{eqnarray*}

We also write $X^\bullet$ for the perfect complex of $\bz_p[G]$ modules $X\stackrel{\Theta}{\lra}X$ with the first term placed in degree 1 and the modules
$H^1(X^\bullet)$ and $H^2(X^\bullet)$ identified with $M$ and $A(F)_p^*$ respectively via the top row of diagram (\ref{pushout}).
By unwinding the definition of refined Euler characteristics and using their basic functoriality properties one then finds that,
independently of the choice of splittings $s_1$ and $s_2$, one has \begin{align*}-\chi_{G,p}\left(\Cf,(\lambda^{{\rm NT},j}_{A,F})^{-1} \right)=&
-\chi_{G,p}\left(X^\bullet,\Phi^{-1}_{\bc_p}\circ(\lambda^{{\rm NT},j}_{A,F})^{-1} \right)\\ =&\delta_{G,p}\left({\rm det}_{\bc_p[G]}(\langle \lambda^{{\rm NT},j}_{A,F}\circ\Phi_{\bc_p},\Theta,s_1,s_2\rangle)\right).\end{align*}

The proof of equality (\ref{chiGp eq}), and hence of Theorem \ref{ZpG theorem}, will thus be achieved by the following explicit computation.

\begin{prop}\label{thedeterminant} There exist splittings $s_1$ and $s_2$ as above with the property that
${\rm det}_{\bc_p[G]}(\langle \lambda^{{\rm NT},j}_{A,F}\circ\Phi_{\bc_p},\Theta,s_1,s_2\rangle)=\sum_{\psi \in \widehat{G}} j(\lambda_\psi(\calP, \calP^t)) \epsilon_\psi(\Phi)  \delta_\psi  e_\psi $.
\end{prop}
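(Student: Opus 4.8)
The plan is to compute the determinant of the block automorphism $\langle \lambda^{{\rm NT},j}_{A,F}\circ\Phi_{\bc_p},\Theta,s_1,s_2\rangle$ by exploiting the block-triangular structure of $\Theta$ and of $\Phi$ with respect to the decomposition of $X$, $M$ indexed by the pairs $(t,j)$, and then evaluating the resulting central element character by character. First I would make the splittings $s_1$ and $s_2$ completely explicit. The map $\Theta = \Sigma$ acts on the $(t,j)$-summand $\ZpG$ of $X$ by multiplication by $\sigma^{p^t}-1$, and after scalar extension to $\bc_p$ the summand decomposes along idempotents $e_\psi$: on $e_\psi\cdot(\text{the }(t,j)\text{-summand})$ the map is multiplication by $\psi(\sigma^{p^t}-1)$, which is invertible precisely when $t < t_\psi$ (i.e. when $\sigma^{p^t}\notin\ker\psi = H_{t_\psi}$) and zero precisely when $t\geq t_\psi$. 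This means that, $\psi$-component by $\psi$-component, $\mathrm{im}(\Theta)_{\bc_p}$ is the span of the summands with $t < t_\psi$, while $\ker(\Theta)_{\bc_p} = M_{\bc_p}$ sits in the summands with $t\geq t_\psi$; similarly $\ker(\pi)_{\bc_p}$ is spanned by the $t<t_\psi$ summands and $A(F)^*_{p,\bc_p}$ identifies with the $t\geq t_\psi$ part via $\pi$. The natural splittings $s_1$ and $s_2$ are then the identifications that send the $(t,j)$-summand with $t < t_\psi$ into $\mathrm{im}(\Theta)_{\bc_p}$ (resp. $\ker(\pi)_{\bc_p}$) and the $(t,j)$-summand with $t\geq t_\psi$ into $M_{\bc_p}$ (resp. $A(F)^*_{p,\bc_p}$), using $\iota$ (resp. $\pi$) to match bases; with these choices the composite automorphism becomes block diagonal on each $\psi$-component, with one block coming from the ``$t < t_\psi$'' part and one from the ``$t\geq t_\psi$'' part.

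On the ``$t < t_\psi$'' block the automorphism is: apply $s_1$ to land in $\mathrm{im}(\Theta)_{\bc_p}$, do nothing ($(\Phi_{\bc_p},\mathrm{id})$ and $(\lambda^{{\rm NT},j},\mathrm{id})$ are the identity there), then apply $s_2^{-1}$ to come back. The composite $s_2^{-1}\circ s_1$ on this block is, in terms of the chosen bases, exactly ``apply $\Theta$ then undo $\pi$'s splitting'' — unwinding the diagram (\ref{pushout}) this contributes precisely the determinant of the upper $\psi$-minor of $\Sigma$, namely $\det(\psi(\Sigma^{t_\psi - 1})) = \delta_\psi$. (One should double-check the indexing convention: $\Sigma^{t_0}$ keeps entries with $u,t\leq t_0$, so the $t\leq t_\psi - 1$ block is $\Sigma^{t_\psi-1}$, matching $\delta_\psi$ exactly.) On the ``$t\geq t_\psi$'' block the composite is $\lambda^{{\rm NT},j}_{A,F}\circ\Phi_{\bc_p}$ restricted to the corresponding summands of $M_{\bc_p}$, read through the identification of $M$ with $\bigoplus\Zp[G/H_t]$ on the source and of $A(F)^*_p$ with $\bigoplus\Zp[G/H_s]P^*_{(s,i)}$ (Lemma \ref{dual basis}) on the target. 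By the definition (\ref{definingphi}) of the matrix $\Phi$, the map $\Phi_{\bc_p}$ restricted to the $t\geq t_\psi$ block has matrix $\psi(\Phi_{t_\psi})$ after applying $e_\psi$, whose determinant is $\varepsilon_\psi(\Phi)$ by definition. It then remains to identify the extra factor contributed by $\lambda^{{\rm NT},j}_{A,F}$: writing the Néron–Tate pairing in the basis $\{P^t_{(t,j)}\}$ of $A^t(F)$ and the dual-type basis $\{P^*_{(t,j)}\}$ of $A(F)^*_p$, the matrix of $\lambda^{{\rm NT},j}_{A,F}$ is, up to the averaging over cosets built into $R(\calP,\calP^t)$, exactly the regulator matrix $R(\calP,\calP^t)$; restricting to the $t\geq t_\psi$ block and applying $\psi$ gives $\psi(R_{t_\psi}(\calP,\calP^t))$, whose determinant is $j(\lambda_\psi(\calP,\calP^t))$ once one tracks the field isomorphism $j$ through the comparison isomorphisms defining $\lambda^{{\rm NT},j}_{A,F}$.

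Multiplying the two blocks on each $\psi$-component and summing over $\psi$ with the idempotents $e_\psi$ then yields $\sum_{\psi}j(\lambda_\psi(\calP,\calP^t))\,\varepsilon_\psi(\Phi)\,\delta_\psi\,e_\psi$, as claimed; the independence of the choice of $s_1,s_2$ is already guaranteed by the functoriality of refined Euler characteristics noted before the statement, so it suffices to exhibit one good choice. The main obstacle I anticipate is the careful bookkeeping in the ``$t\geq t_\psi$'' block: one must simultaneously reconcile (a) the averaging/idempotent factors $\frac{1}{|H_u|}\sum_{\tau\in G/H_u}(\tau\cdot e_{H_u})$ appearing in the definition of $R(\calP,\calP^t)$ with the way $M = \bigoplus\Zp[G/H_t]$ embeds into $X$ via $\iota_t(x) = {\rm Tr}_{H_t}x$, (b) the passage between $A^t(F)_p$ and $A(F)_p^*$ effected by $\lambda^{{\rm NT},j}$ versus the identification used to define $\Phi$, and (c) the twist by $j$ relating the complex-analytic height pairing to its $\bc_p$-avatar. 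Getting all normalisation constants to cancel exactly — rather than up to a unit, which would not suffice here — is the delicate point, and is presumably where the bulk of \cite[Sec.~4]{bleytwo}-style computation is needed; I would handle it by first treating the case $n=1$ (or $m_t=0$ for $t<n$) as a sanity check, where $R_{t_\psi}$ and $\Sigma^{t_\psi-1}$ degenerate, and then proceeding by the evident induction on the filtration index.
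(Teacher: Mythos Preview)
Your approach is essentially the same as the paper's: both choose explicit splittings compatible with the direct-sum decomposition of $X$ and $M$ indexed by the pairs $(t,j)$, compute the resulting automorphism, and read off the determinant character by character via the block structure determined by whether $t<t_\psi$ or $t\ge t_\psi$. The paper writes down global formulas for $s_1$ and $s_2$ (namely $1\mapsto(\tfrac{1}{|H_s|},\sigma^{p^s}-1)$ and $1\mapsto(P^*_{(s,i)},1-e_{H_s})$) and then evaluates at each $\psi$, whereas you organise the same computation $\psi$-component first; since $\bc_p[G]=\prod_\psi \bc_p$, these are equivalent.

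Two small remarks. First, your ``block diagonal'' claim is in fact correct, but the reason is slightly hidden: for columns $s\ge t_\psi$ and rows $u<t_\psi$ one uses that $e_\psi P^*_{(u,k)}=0$, and for columns $s<t_\psi$ and rows $u\ge t_\psi$ one uses that $\Phi(e_{(s,i)})$ is $H_s$-fixed so that $e_\psi\Phi(e_{(s,i)})=0$; the paper does not need block-diagonality and simply uses the triangular form. Second, the ``main obstacle'' you flag (the $\tfrac{1}{|H_u|}$ in the definition of $R(\calP,\calP^t)$ versus the $\tfrac{1}{|H_s|}$ coming from $\iota_s^{-1}$) resolves cleanly: on the $t\ge t_\psi$ block one finds $B=\mathrm{diag}(|H_u|)\cdot j(\psi(R_{t_\psi}))\cdot\psi(\Phi_{t_\psi})\cdot\mathrm{diag}(|H_s|)^{-1}$, and the two diagonal matrices have equal determinant since they are indexed by the same set of pairs. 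The paper encapsulates the height-pairing computation underlying this in a separate lemma expressing $\lambda^{\NT,j}_{A,F}(P^t_{(t,j)})$ in the dual basis $\{P^*_{(u,k)}\}$; your assertion that ``the matrix of $\lambda^{\NT,j}_{A,F}$ is exactly $R(\calP,\calP^t)$'' is that lemma.
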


\begin{proof}
Let $\{ w_{(s,i)} : s=0, \ldots,n, i \in [m_s] \}$ be the standard basis of $X$. For each pair $(s,i)$ we write
$W_s = W_{(s,i)}$ for the kernel of the canonical map
\[
\Cp[G] \stackrel{}\lra \Cp[G/H_s],
\]
so that $W_s = ( \sigma^{p^s} - 1)\CpG = (1 - e_{H_s})\CpG$. We then have a commutative diagram
\mpar{split 2 extension}
\begin{equation}
  \label{split 2 extension}
\xymatrix{
0 \ar[r] & \Cp[G/H_s] \ar[r]^{\iota_s} & \CpG \ar[rr]^{\sigma^{p^s}-1} \ar@{>>}[rd] & & \CpG \ar[r]^{\pi_{s,i}\phantom{xxxxx}} & \Cp[G/H_s]P^*_{(s,i)} \ar[r] &  0 \\
&  &   & W_s \ar@{^{(}->}[ru] & &  &
}
\end{equation} with furthermore $\bigoplus_{(s,i)}W_{(s,i)}$ equal to ${\rm im}(\Theta)_{\bc_p}={\rm ker}(\pi)_{\bc_p}$.
We now fix the required splittings $s_1$ and $s_2$ by summing over all pairs $(s,i)$ the splittings of the short exact sequences in
(\ref{split 2 extension}) given by
\mpar{s1}
\begin{equation}
  \label{s1}
  \CpG \lra \Cp[G/H_s] \oplus W_s, \quad 1 \mapsto \left( \frac{1}{|H_s|}, \sigma^{p^s} - 1 \right)
\end{equation}
and 
\mpar{s2}
\begin{equation}
  \label{s2}
  \CpG \lra \Cp[G/H_s]P^*_{(s,i)} \oplus W_s, \quad 1 \mapsto \left(P^*_{(s,i)} , 1 - e_{H_s} \right)
\end{equation}
respectively. Note that for the inverse map in (\ref{s2}) we have $(P^*_{(s,i)}, 0) \mapsto e_{H_s}$ and
$(0, \sigma^{p^s} - 1) \mapsto  \sigma^{p^s} - 1$. 

After these preparations we proceed to compute the matrix $\Lambda^{\NT}(\Phi)$ which represents $\langle \lambda^{{\rm NT},j}_{A,F}\circ\Phi_{\bc_p},\Theta,s_1,s_2\rangle$
with respect to the fixed $\bc_p[G]$-basis $\{ w_{(s,i)} \}$ of $X_{\bc_p}$. From (\ref{s1}) and (\ref{definingphi}) it follows easily that the composite
of $s_1$ and $(\Phi_{\bc_p}, {\rm id})$ maps $w_{(s,i)}$ to
\[
\left( \frac{1}{|H_s|} \sum_{(t,j)} \Phi_{(t,j), (s,i)} P^t_{(t,j)}, \left( \ldots, \sigma^{p^s} - 1, \ldots \right) \right)
\]
in $A^t(F)_{\bc_p}\oplus{\rm im}(\Theta)_{\bc_p}=\left( \bigoplus_{(t,j)} \Cp[G/H_t] P^t_{(t,j)} \right) \oplus \left( \bigoplus_{(t,j)} W_t \right)$
with the only non-zero component in  $\oplus_{(t,j)} W_t$ at the
$(s,i)$-spot. By Lemma \ref{eval lambda NT} below this is further mapped by $(\lambda^{\NT,j}_{A,F}, {\rm id})$ to
\[
\left(  \frac{1}{|H_s|}  \sum_{(t,j)} \Phi_{(t,j), (s,i)} \sum_{(u,k)}
\left( \sum_{\tau \in G/H_u} j(\langle \tau P_{(u,k)}, P^t_{(t,j)} \rangle_F) \tau e_{H_u} \right) P^*_{(u,k)} ,
\left( \ldots, \sigma^{p^s} - 1, \ldots \right) \right).
\]
Rearranging the summation and applying the map $s_2^{-1}$ as described in (\ref{s2}) we obtain
\[
\sum_{(u,k)} \left( \frac{1}{|H_s|}  \sum_{(t,j)} \Phi_{(t,j), (s,i)}
\sum_{\tau \in G/H_u} j(\langle \tau P_{(u,k)},  P^t_{(t,j)} \rangle_F) \tau e_{H_u} \right) w_{(u,k)} + (\sigma^{p^s}-1) w_{(s,i)}.
\]
We now fix a character $\psi \in \widehat G$. We have that
\begin{eqnarray*}
  && \psi\left( \frac{1}{|H_s|}  \sum_{(t,j)} \Phi_{(t,j), (s,i)}
\sum\limits_{\tau \in G/H_u} j(\langle \tau P_{(u,k)}, P^t_{(t,j)} \rangle_F) \tau e_{H_u} \right) \\
&=&
\begin{cases}
  \frac{1}{|H_s|}  \sum\limits_{(t,j)} \Phi_{(t,j), (s,i)}
\sum\limits_{\tau \in G/H_u} j(\langle \tau P_{(u,k)}, P^t_{(t,j)}) \rangle_F \psi(\tau), & u \ge t_\psi, \\
0, & u < t_\psi,
\end{cases}
\end{eqnarray*}
while $\psi(\sigma^{p^s}-1)$ is equal to 0 if and only if $s \ge t_\psi$.

We immediately obtain that
\[
\det\left(\psi\left( \Lambda^\NT(\Phi) \right) \right) = j(\lambda_\psi(\calP, \calP^t)) \cdot \varepsilon_\psi(\Phi) \cdot \delta_\psi,
\]
as required.
\end{proof}

We finally provide the relevant Lemma used in the course of the above proof.

\mpar{eval lambda NT}
\begin{lem}\label{eval lambda NT}
$$\lambda^{\NT,j}_{A,F} \left(P^t_{(t,j)}\right) = \sum_{(u,k)} \left( \sum_{\tau \in G/H_u} j(\langle \tau P_{(u,k)}, P^t_{(t,j)} \rangle_F) \tau e_{H_u} \right) P^*_{(u,k)}.$$
\end{lem}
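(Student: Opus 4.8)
The plan is to unwind the definition of the trivialisation $\lambda^{\NT,j}_{A,F}$ recalled in Section \ref{relevant bmw results}, evaluate it on the single point $P^t_{(t,j)}$, and then expand the resulting $\CC_p$-linear functional in the dual basis provided by Lemma \ref{dual basis}.

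First I would trace $P^t_{(t,j)}\in A^t(F)_p$ through the chain of isomorphisms $\CC_p\otimes_{\ZZ_p}A^t(F)_p \cong \CC_p\otimes_{\CC,j}(\CC\otimes_\ZZ A^t(F)) \cong \CC_p\otimes_{\CC,j}\Hom_{\CC}(\CC\otimes_{\ZZ}A(F),\CC) \cong \CC_p\otimes_{\ZZ_p}\Hom_{\ZZ_p}(A(F)_p,\ZZ_p)$ that defines $\lambda^{\NT,j}_{A,F}$, the middle arrow being induced by the N\'eron--Tate height pairing relative to $F$. Since hypothesis (a) forces $A(F)_p$ to be torsion-free, the target is genuinely the $\CC_p$-linear dual of $\CC_p\otimes_{\ZZ_p}A(F)_p$, and the upshot of this tracing is that $\lambda^{\NT,j}_{A,F}(P^t_{(t,j)})$ is the functional $x\mapsto j\bigl(\langle x,P^t_{(t,j)}\rangle_F\bigr)$, extended $\CC_p$-linearly. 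This is the only place where the $j$-semilinear identifications have to be handled with care, and it is the step I expect to be the most delicate, although it amounts to bookkeeping rather than to any substantial difficulty.

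Next I would invoke Lemma \ref{dual basis}: more precisely the parenthetical observation in its proof, which is immediate from (\ref{dual basis eq}), namely that the family $\{\tau P^*_{(u,k)}\}$ with $0\le u\le n$, $k\in[m_u]$ and $\tau$ running over $G/H_u$ is exactly the $\ZZ_p$-dual basis of the $\ZZ_p$-basis $\{\tau P_{(u,k)}\}$ of $A(F)_p$. Expanding the functional above in this basis then gives
\[
\lambda^{\NT,j}_{A,F}\bigl(P^t_{(t,j)}\bigr)=\sum_{(u,k)}\ \sum_{\tau\in G/H_u}j\bigl(\langle \tau P_{(u,k)},P^t_{(t,j)}\rangle_F\bigr)\,\tau P^*_{(u,k)},
\]
where each coefficient depends only on the class of $\tau$ in $G/H_u$ because $G$ is abelian and $P_{(u,k)}\in A(F^{H_u})$. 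Finally, since $gP^*_{(u,k)}=P^*_{(u,k)}$ for all $g\in H_u$ by (\ref{dual basis eq}) one has $e_{H_u}P^*_{(u,k)}=P^*_{(u,k)}$, hence $\tau P^*_{(u,k)}=(\tau e_{H_u})P^*_{(u,k)}$ for each $\tau$; substituting this and factoring $P^*_{(u,k)}$ out of the inner sum converts the displayed identity into the asserted formula, which yields the claim of the lemma.
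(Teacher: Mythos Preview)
Your proposal is correct and follows essentially the same approach as the paper: both identify $\lambda^{\NT,j}_{A,F}(P^t_{(t,j)})$ with the functional $x\mapsto j(\langle x,P^t_{(t,j)}\rangle_F)$, use the dual basis $\{\tau P^*_{(u,k)}\}$ from Lemma \ref{dual basis}, and invoke the identity $e_{H_u}P^*_{(u,k)}=P^*_{(u,k)}$ coming from (\ref{dual basis eq}). The only cosmetic difference is that the paper evaluates the right-hand side on the basis elements $\gamma P_{(v,l)}$ to check it agrees with the functional, whereas you expand the functional directly in the dual basis; these are the same computation read in opposite directions.
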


\begin{proof}
We recall that $\lambda^{\NT,j}_{A,F}$ is induced by $\langle \,,\, \rangle_F \colon A(F) \times A^t(F) \lra \CC$. For $P^t \in A^t(F)$
 we explicitly have $\lambda^{\NT,j}_{A,F}(P^t) = j(\langle\,\,\, , P^t \rangle_F)$. Let $f \in A(F)_p^*$ denote the map defined by the right hand side
of the equation in Lemma (\ref{eval lambda NT}). From (\ref{dual basis eq}) we immediately see that
$e_{H_u} P^*_{(u,k)} = P^*_{(u,k)}$. For each pair $(v,l)$ and $\gamma \in G/H_v$ we hence have that
\begin{eqnarray*}
  f(\gamma P_{(v,l)}) &=& \sum_{(u,k)} \sum_{\tau \in G/H_u}
                        j(\langle \tau P_{(u,k)}, P^t_{(t,j)} \rangle_F) \left( \tau P^*_{(u,k)} \right) \left( \gamma P_{(v,l)} \right) \\
                    &=&   j(\langle \gamma P_{(v,l)}, P^t_{(t,j)} \rangle_F) \\
                   &=&   \left( \lambda^{\NT,j}_{A,F}(P^t_{(t,j)} ) \right) ( \gamma P_{(v,l)}).
\end{eqnarray*}
\end{proof}

\mpar{Proof of rat theorem}
\subsection{The proof of Theorem \ref{rat theorem}}\label{Proof of rat theorem}


We set $M_{A,F}=h^1(A_F)(1)$ and recall that the approach used in \cite{bufl01} in order to formulate the conjecture $C(A, \QG)$ relies upon the theory of
categories of virtual objects. Although focused on the study of $p$-parts of the relevant equivariant Tamagawa numbers for prime numbers $p$, the exact same techniques
involved in the proof of \cite[Prop. 4.2]{bmw} allow one to translate this more technical language into the one of refined Euler characteristics employed throughout
this article. For this reason, we will avoid any explicit mention of categories of virtual objects throughout this proof. Furthermore, since we only need to consider
the relevant realisations and cohomology spaces associated to $M_{A,F}$ as modules over the semisimple algebras $\bq[G]$ or $\br[G]$, we may directly reformulate
$C(A, \QG)$ in terms of the determinants of certain endomorphisms of free $\br[G]$-modules, which is what we do in the sequel.

The leading term $L^*(M_{A,F},0)$ at $s=0$ of the $\bq[G]$-equivariant motivic $L$-function of $M_{A,F}$ is given by
$\sum_{\chi\in{\rm Ir}(G)}e_\chi L^*(A,\check{\chi},1)$.
We let once again
$\lambda^{NT}_{A,F}:\br\otimes_\bz A^t(F)\to \br\otimes_\bz A(F)^*$ denote the canonical isomorphism induced by the N\'{e}ron-Tate height pairing and 
\[\alpha_{A,F}:\br\otimes_\bq\bigoplus_{v\in S_\infty^F}H^0(F_v,H_v(M_{A,F}))\to
\br\otimes_\bq H_{{\rm dR}}(M_{A,F})/F^0\] denote the canonical period isomorphism described by Deligne in \cite{deligne} (see also \cite[Sec. 3]{bufl01}
for a general description of the modules involved and \cite[Sec. 4.3]{bmw} for more details in the relevant special case).

We now note that the $\bq[G]$-modules $X:=\bq\otimes_\bz A^t(F)$ and $Y:=\bq\otimes_\bz A(F)^*$, resp.
$Z:=\bigoplus_{v\in S_\infty^F}H^0(F_v,H_v(M_{A,F}))$ and $W:=H_{{\rm dR}}(M_{A,F})/F^0$,
are isomorphic (as a consequence, for instance, of \cite[p. 110]{cf}) and hence, since $\bq[G]$ is semisimple, there exist $\bq[G]$-modules $M$ and $N$
with the property that both $X\oplus M\cong Y\oplus M$ and $Z\oplus N\cong W\oplus N$ are free $\bq[G]$-modules. In the sequel we (choose bases and so) fix identifications of
$X\oplus M$, $Y\oplus M$, $Z\oplus N$ and $W\oplus N$ with direct sums of copies of $\bq[G]$ and hence regard $\lambda^{NT}_{A,F}\oplus{\rm id}_{\br\otimes_\bq M}$ and
$\alpha_{A,F}\oplus{\rm id}_{\br\otimes_\bq N}$ as elements of $K_1(\br[G])$.

The validity of conjecture $C(A,\QG)$ is then equivalent
to the containment
\[L^*(M_{A,F},0)/({\rm det}_{\br[G]}(\alpha_{A,F}\oplus{\rm id}_{\br\otimes_\bq N}){\rm det}_{\br[G]}(\lambda^{NT}_{A,F}\oplus{\rm id}_{\br\otimes_\bq M}))\in\bq[G]^\times.\]
But it is clear from the proof of Lemma \ref{eval lambda NT} that, independently of our choice of fixed identifications,
\begin{equation}\label{regulatorforrat} {\rm det}_{\br[G]}(\lambda^{NT}_{A,F}\oplus{\rm id}_{\br\otimes_\bq M})/\sum_{\chi\in{\rm Ir}(G)}e_\chi\lambda_\chi(\mathcal{P},\mathcal{P}^t)\in\bq[G]^\times,\end{equation}
and it is straightforward to deduce from the proof of \cite[Lemma 4.5]{bmw} that, independently of our choice of fixed identifications,
\begin{equation}\label{periodforrat} {\rm det}_{\br[G]}(\alpha_{A,F}\oplus{\rm id}_{\br\otimes_\bq N})/\sum_{\chi\in{\rm Ir}(G)}e_\chi\frac{w_\infty(k)^d\cdot \Omega(A/k)}{\tau^*(\Qu,  \ind_k^\Qu(\chi))^d}\in\bq[G]^\times.\end{equation}
The equalities (\ref{regulatorforrat}) and (\ref{periodforrat}), combined with the fact that the Euler factors involved in the truncation of each of the leading terms
$L_{S_r}^*(A, \check{\psi}, 1)$ live by definition in $\bq[G]^\times$, therefore imply
that the validity of $C(A, \QG)$ is equivalent to the containment
\[
\sum_{\psi \in \widehat{G}} \frac{\calL^*_\psi}{\lambda_\psi(\calP, \calP^t)} e_\psi \in \QG^\times.
\]
By \cite[Lem.~2.9]{bleyone} this containment is equivalent to the explicit condition described in Theorem \ref{rat theorem}, as required.

\mpar{Proof of max theorem}
\subsection{The proof of Theorem \ref{max theorem}}\label{Proof of max theorem}

We assume now that C$(A,\bq[G])$ is valid and proceed to prove the explicit interpretation of $C_p(A,\mathcal{M})$ claimed in Theorem \ref{max theorem}.
We begin by noting that, for any fixed isomorphism of fields $j:\bc\to\bc_p$, the respective maps $j_{G,*}$ restrict to give the vertical arrows in a natural
commutative diagram with exact rows of the form
\begin{equation}\label{maxdiagram}
\begin{CD} K_0(\bz[G],\bq[G])_{{\rm tor}} @> >> K_0(\bz[G],\bq[G]) @> \mu >> K_0(\calM,\bq[G])\\
@V j_{G,*} VV  @V j_{G,*} VV @V j_{G,*} VV \\
K_0(\bz_p[G],\bq_p[G])_{{\rm tor}} @>  >> K_0(\bz_p[G],\bq_p[G]) @> \mu_p >> K_0(\calM_p,\bq_p[G]).\\
\end{CD}\end{equation}
We note that the exactness of the rows follows from \cite[Lemma 11]{bufl01}. We now proceed to prove several useful results.

\begin{lem}\label{maxtorsion} C$_p(A,\calM)$ holds if and only if $j_{G,*}\bigl(T\Omega\bigl(h^1(A_{F})(1),\ZZ[G]\bigr)\bigr)$ belongs to $K_0(\bz_p[G],\bq_p[G])_{{\rm tor}}$.
\end{lem}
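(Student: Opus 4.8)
The proof will be a short diagram chase in (\ref{maxdiagram}). Recall that, under the standing assumption (in force throughout this subsection) that $C(A,\QG)$ is valid, the rationality part of the conjecture guarantees that the equivariant Tamagawa number $T\Omega := T\Omega\bigl(h^1(A_F)(1),\ZZ[G]\bigr)$, which a priori lies only in $K_0(\ZZ[G],\RR[G])$, in fact lies in the subgroup $K_0(\ZZ[G],\QQ[G])$. Hence its image $j_{G,*}(T\Omega)$ under the relevant vertical arrow of (\ref{maxdiagram}) lies in $K_0(\ZpG,\QpG)$, i.e. in the domain of the map $\mu_p$ of that diagram.

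Next I would invoke the functoriality of equivariant Tamagawa numbers under the change of order $\ZZ[G]\subseteq\calM$ (part of the formalism underlying their definition in \cite{bufl01}) to identify $T\Omega\bigl(h^1(A_F)(1),\calM\bigr)$ with $\mu(T\Omega)$ in $K_0(\calM,\QQ[G])$. Combining this with the commutativity of the right-hand square of (\ref{maxdiagram}) gives
\[
j_{G,*}\bigl(T\Omega\bigl(h^1(A_F)(1),\calM\bigr)\bigr) = \mu_p\bigl(j_{G,*}(T\Omega)\bigr).
\]
By the very definition of $C_p(A,\calM)$ — and since, under $C(A,\QG)$, the validity of $C_{p,j}(A,\calM)$ is independent of the chosen isomorphism $j$ — the conjecture $C_p(A,\calM)$ holds if and only if the left-hand side vanishes, hence if and only if $\mu_p\bigl(j_{G,*}(T\Omega)\bigr)=0$.

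Finally, the exactness of the bottom row of (\ref{maxdiagram}), which the paper attributes to \cite[Lemma 11]{bufl01}, identifies $\ker(\mu_p)$ with $K_0(\ZpG,\QpG)_{{\rm tor}}$. Therefore $\mu_p\bigl(j_{G,*}(T\Omega)\bigr)=0$ precisely when $j_{G,*}(T\Omega)\in K_0(\ZpG,\QpG)_{{\rm tor}}$, which is the assertion of the lemma. Of the ingredients used, the exactness of the rows of (\ref{maxdiagram}) and the fact that $j_{G,*}$ restricts to its vertical arrows are imported from \cite{bufl01} and \S\ref{preliminaries}; the one point requiring a little care — though it is not hard — is pinning down the correct functoriality statement in \cite{bufl01} and checking that the scalar-extension map it produces agrees with the map $\mu$ appearing in (\ref{maxdiagram}). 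Everything else is formal.
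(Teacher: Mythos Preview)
Your proposal is correct and follows essentially the same approach as the paper: invoke the functoriality $T\Omega(h^1(A_F)(1),\calM)=\mu(T\Omega)$ (which the paper attributes specifically to \cite[Th.~4.1]{bufl01}), use commutativity of the right-hand square of (\ref{maxdiagram}) to rewrite this after applying $j_{G,*}$, and conclude via exactness of the bottom row. Your additional remarks about $T\Omega$ lying in $K_0(\ZZ[G],\QQ[G])$ under $C(A,\QG)$ and about independence of $j$ are accurate and make explicit points that the paper leaves implicit in its setup.
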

\begin{proof} The equality $T\Omega\bigl(h^1(A_{F})(1),\calM\bigr)=\mu\bigl(T\Omega\bigl(h^1(A_{F})(1),\ZZ[G]\bigr)\bigr)$ proved in \cite[Th. 4.1]{bufl01}
combines with the commutativity of the right-hand square of diagram (\ref{maxdiagram}) to imply that 
\[j_{G,*}\bigl(T\Omega\bigl(h^1(A_{F})(1),\calM\bigr)\bigr)=\mu_p\bigl(j_{G,*}\bigl(T\Omega\bigl(h^1(A_{F})(1),\ZZ[G]\bigr)\bigr)\bigr).\]
The exactness of the bottom row of diagram (\ref{maxdiagram}) thus completes the proof.
\end{proof}

\begin{lem}\label{maxunits}C$_p(A,\calM)$ holds if and only if
$\sum_{\psi \in \widehat{G}} \frac{\calL_\psi^*}{j(\lambda_\psi(\calP, \calP^t) )\epsilon_\psi(\Phi) \delta_\psi}
e_\psi\in\calM_p^\times$. 
\end{lem}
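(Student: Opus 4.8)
The plan is to deduce the claim from the $K$-theoretic identity already established in the proof of Theorem \ref{ZpG theorem}, by pushing it forward along the change-of-order map to the maximal order $\calM$. Write $\xi:=\sum_{\psi\in\widehat{G}}\frac{\calL_\psi^*}{j(\lambda_\psi(\calP,\calP^t))\,\epsilon_\psi(\Phi)\,\delta_\psi}\,e_\psi$ for the element appearing in the statement; it lies in $\zeta(\bc_p[G])^\times=\bc_p[G]^\times$, the inversions being legitimate because $\Phi$ is an isomorphism and $\psi(\sigma^{p^t}-1)\neq 0$ whenever $t<t_\psi$. Combining equality (\ref{tomega}) with equality (\ref{chiGp eq}) — the latter being exactly what the proof of Theorem \ref{ZpG theorem} establishes via Proposition \ref{thedeterminant} — and using that $\delta_{G,p}$ is a group homomorphism on $\zeta(\bc_p[G])^\times$, one obtains the identity
\[
j_{G,*}\bigl(T\Omega(h^1(A_F)(1),\bz[G])\bigr)=\delta_{G,p}(\xi)
\]
in $K_0(\bz_p[G],\bc_p[G])$, where here $\delta_{G,p}$ denotes the extended boundary homomorphism attached to the order $\bz_p[G]$.

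Next I would push this identity forward along the projection $\mu_p$ of diagram (\ref{maxdiagram}). By \cite[Th.~4.1]{bufl01} one has $T\Omega(h^1(A_F)(1),\calM)=\mu\bigl(T\Omega(h^1(A_F)(1),\bz[G])\bigr)$, so commutativity of the right-hand square of (\ref{maxdiagram}) gives $j_{G,*}\bigl(T\Omega(h^1(A_F)(1),\calM)\bigr)=\mu_p\bigl(\delta_{G,p}(\xi)\bigr)$. Since the extended boundary maps are compatible with the inclusion of orders $\bz_p[G]\subseteq\calM_p$ — which is immediate from their construction in \cite[Sec.~4.2]{bufl01} and the naturality of the sequence (\ref{K seq}), the reduced norm on $\zeta(\bc_p[G])^\times$ being the same for both orders — the right-hand side equals $\delta_{G,p}(\xi)$ computed relative to the order $\calM_p$. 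Now $C_p(A,\calM)$ is valid exactly when $j_{G,*}\bigl(T\Omega(h^1(A_F)(1),\calM)\bigr)$ vanishes (equivalently $C_{p,j}(A,\calM)$ is valid, by the assumed validity of $C(A,\QG)$ and the consequent independence of $j$); and since $G$ is abelian and $\calM_p$ is the maximal $\bz_p$-order in $\bq_p[G]$, the description $K_0(\calM_p,\bc_p[G])\simeq\bc_p[G]^\times/\calM_p^\times$ recalled in \S\ref{preliminaries} shows that this holds if and only if $\xi\in\calM_p^\times$. This is the asserted equivalence; note that it is the exact analogue of Theorem \ref{ZpG theorem} with $\calM_p$ in place of $\bz_p[G]$.

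I do not expect a genuine obstacle here: the arithmetic content has been entirely absorbed into Proposition \ref{thedeterminant}, and the only step deserving a word of care is the compatibility of the extended boundary homomorphisms with change of order, which is formal. If one prefers to avoid invoking even that, one can instead argue from Lemma \ref{maxtorsion}: the class $\delta_{G,p}(\xi)=j_{G,*}\bigl(T\Omega(h^1(A_F)(1),\bz[G])\bigr)$ lies in $K_0(\bz_p[G],\bq_p[G])$ because $T\Omega(h^1(A_F)(1),\bz[G])\in K_0(\bz[G],\bq[G])$ under the validity of $C(A,\QG)$, so by Lemma \ref{maxtorsion} one has that $C_p(A,\calM)$ holds if and only if $\delta_{G,p}(\xi)$ is torsion; and under the isomorphism $K_0(\bz_p[G],\bq_p[G])\simeq\bq_p[G]^\times/\bz_p[G]^\times$ the torsion subgroup is precisely the image of $\calM_p^\times$ — since $[\calM_p^\times:\bz_p[G]^\times]$ is finite while the further quotient $\bq_p[G]^\times/\calM_p^\times\simeq\bigoplus_{\psi}\bq_p(\psi)^\times/\calO_{\psi}^\times$ is torsion-free — so that again $\delta_{G,p}(\xi)$ is torsion if and only if $\xi\in\calM_p^\times$.
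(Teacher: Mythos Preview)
Your proposal is correct and follows essentially the same route as the paper: combine (\ref{tomega}) with (\ref{chiGp eq}) to obtain $j_{G,*}\bigl(T\Omega(h^1(A_F)(1),\bz[G])\bigr)=\delta_{G,p}(\xi)$, push forward along $\mu_p$ using the functoriality $T\Omega(\calM)=\mu(T\Omega(\bz[G]))$, and conclude via the identification $K_0(\calM_p,\bc_p[G])\simeq\bc_p[G]^\times/\calM_p^\times$; the compatibility of the two extended boundary maps that you invoke is precisely the commutative square the paper writes down. Your alternative argument via Lemma~\ref{maxtorsion} and the identification of the torsion subgroup of $\bq_p[G]^\times/\bz_p[G]^\times$ with $\calM_p^\times/\bz_p[G]^\times$ is also valid and amounts to the same thing, since the exactness of the bottom row of diagram~(\ref{maxdiagram}) encodes exactly that identification.
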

\begin{proof} Lemma \ref{maxtorsion} combines with equalities (\ref{tomega}) and (\ref{chiGp eq}) to imply that C$_p(A,\calM)$ holds if and only if  
\[\mu_p\left(\delta_{G,p}\left(\sum_{\psi \in \widehat{G}} \frac{\calL_\psi^*}{j(\lambda_\psi(\calP, \calP^t)) \epsilon_\psi(\Phi) \delta_\psi}
e_\psi\right)\right)=0. 
\]

We next note that the respective maps $\delta_{G,p}$ induce vertical (bijective) arrows in a commutative diagram of the form
\[\begin{CD} \QpG^\times / \ZpG^\times @> >> \QpG^\times / \calM_p^\times\\
@V  VV  @V  VV \\
K_0(\bz_p[G],\bq_p[G]) @> \mu_p >> K_0(\calM_p,\bq_p[G])\\.
\end{CD}\] This completes the proof of the Lemma.
\end{proof}

\mpar{unit lemma}
\begin{lem}\label{unit lemma}
  $\varepsilon_\psi(\Phi) \in \Zp[\psi]^\times$.
\end{lem}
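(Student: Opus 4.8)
The plan is to show that the lower $\psi$-minor $\varepsilon_\psi(\Phi) = \det(\psi(\Phi_{t_\psi}))$ is a $\frp_\psi$-adic unit by interpreting it as (a twist of) the determinant of an isomorphism of free $\Zp[\psi]$-modules. Recall that $\Phi \in \Hom_{\ZpG}(M, A^t(F)_p)$ is a $\ZpG$-module \emph{isomorphism} (property (P1)), with matrix of the block form (\ref{Phi mat}) relative to the chosen bases. The key observation is that applying $\psi$ and passing to the relevant submatrix amounts to evaluating $\Phi$ on the appropriate $\psi$-isotypic piece, where both source and target become free modules over $\Zp[\psi]$ of the same rank. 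Concretely, since $H_{t_\psi} = \ker(\psi)$, the character $\psi$ factors through $G/H_{t_\psi}$ and induces a ring surjection $\ZpG \to \Zp[\psi]$, and hence also $\Zp[G/H_t] \to \Zp[\psi]$ for every $t \le t_\psi$ (and the zero map for $t > t_\psi$, since then $\psi$ is non-trivial on $H_t$).

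First I would identify the effect of $\psi$ on the data. The source module $M = \bigoplus_{(t,j)} \Zp[G/H_t]$ has $\psi$-component $\bigoplus_{(t,j),\, t \le t_\psi} \Zp[\psi]\, \bar e_{(t,j)}$, a free $\Zp[\psi]$-module on the images of the chosen basis elements $e_{(t,j)}$ with $t \le t_\psi$. Likewise $A^t(F)_p = \bigoplus_{(t,j)} \Zp[G/H_t] P^t_{(t,j)}$ by (\ref{global points}), whose $\psi$-component is free on the images of the $P^t_{(t,j)}$ with $t \le t_\psi$; here one uses that $\Zp[G/H_t] P^t_{(t,j)} \cong \Zp[G/H_t]$ so that $\psi$ acts on it exactly as on $\Zp[G/H_t]$. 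By (\ref{definingphi}) the matrix of the induced map $\psi(M) \to \psi(A^t(F)_p)$ in these bases is precisely $\psi(\Phi^{t_\psi})$ — the upper-left block indexed by $t, s \le t_\psi$ — and its determinant is $\varepsilon_\psi(\Phi)$ up to the observation that the block structure (\ref{Phi mat}) makes the notation $\Phi_{t_\psi}$ unambiguous. Since $\Phi$ is a $\ZpG$-isomorphism, applying the exact functor $\Zp[\psi] \otimes_{\ZpG} (-)$ (equivalently, tensoring up along $\ZpG \to \Zp[\psi]$) yields an isomorphism of free $\Zp[\psi]$-modules of equal finite rank, whose determinant therefore lies in $\Zp[\psi]^\times$. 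That determinant is $\varepsilon_\psi(\Phi)$, which gives the claim.

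The only genuine subtlety — and the step I would be most careful about — is checking that the relevant functor is \emph{exact} on the modules in question, i.e. that no spurious Tor-terms appear when base-changing $\Phi$ along $\ZpG \to \Zp[\psi]$, and that the resulting bases really are $\Zp[\psi]$-bases (not merely $\Qp(\psi)$-bases). For the source $M$ this is automatic since each summand $\Zp[G/H_t]$ is a permutation module and base-changes to the free module $\Zp[\psi]$ (when $t \le t_\psi$) or to $0$ (when $t > t_\psi$). For the target one invokes the isomorphisms $\Zp[G/H_t] P^t_{(t,j)} \cong \Zp[G/H_t]$ from (\ref{global points}) to reduce to the same computation. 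Since $\Phi$ is bijective, its base change is surjective with a cokernel that vanishes after base change; as both modules are $\Zp[\psi]$-free of the same rank, the base-changed map is an isomorphism over $\Zp[\psi]$, and an endomorphism-type determinant argument (choosing compatible bases on source and target, the map is given by a square matrix over $\Zp[\psi]$ whose reduction mod $\frp_\psi$ is still invertible) shows $\varepsilon_\psi(\Phi) = \det(\psi(\Phi_{t_\psi})) \in \Zp[\psi]^\times$, as required.
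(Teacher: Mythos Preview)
Your overall strategy is the same as the paper's: base-change the isomorphism $\Phi$ along $\ZpG \to \Zp[\psi]$, identify the resulting map with the matrix $\psi(\Phi_{t_\psi})$, and conclude that its determinant is a unit. However, your execution contains a genuine index error that needs fixing.

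You write that $\psi$ induces a surjection $\Zp[G/H_t] \to \Zp[\psi]$ for $t \le t_\psi$ and the zero map for $t > t_\psi$. This is backwards. Recall that $|H_t| = p^{n-t}$, so $H_0 = G$ and $H_n = \{1\}$; thus $H_t \subseteq H_{t_\psi} = \ker(\psi)$ if and only if $t \ge t_\psi$. Hence $\psi$ factors through $G/H_t$ (giving $\Zp[G/H_t] \otimes_{\ZpG} \Zp[\psi] \cong \Zp[\psi]$) precisely when $t \ge t_\psi$, not $t \le t_\psi$. This is why the relevant block is the \emph{lower-right} one $\Phi_{t_\psi}$ (indices $\ge t_\psi$), not the upper-left block $\Phi^{t_\psi}$ you describe; your appeal to the block form (\ref{Phi mat}) to reconcile the two does not work for general $t_\psi$.

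Two further remarks. First, your worry about Tor-terms is unnecessary: any additive functor, in particular $- \otimes_{\ZpG} \Zp[\psi]$, preserves isomorphisms, so $\Phi \otimes \Zp[\psi]$ is automatically an isomorphism. (The paper phrases this by first passing to the maximal order $\calM_p$ and then projecting via the rational idempotent for $\psi$, but this is the same map.) Second, for $t < t_\psi$ the module $\Zp[G/H_t] \otimes_{\ZpG} \Zp[\psi]$ is not zero but rather the finite torsion module $\Zp[\psi]/(\psi(\sigma^{p^t}) - 1)$; the point is that the isomorphism $\Phi \otimes \Zp[\psi]$ restricts to an isomorphism on torsion submodules and hence induces an isomorphism on torsion-free quotients, and it is the latter that is represented by $\psi(\Phi_{t_\psi})$. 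With the inequality corrected and this torsion observation added, your argument is complete and matches the paper's.
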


\begin{proof}
The map $\Phi \otimes \calM_p \colon M \otimes_{\ZpG} \calM_p \lra A^t(F)_p \otimes_{\ZpG} \calM_p$ is an isomorphism of
$\calM_p$-modules. Since $\calM_p$ contains the $\QpG$-rational idempotents,
\[
\Phi \otimes \Zp[\psi] \colon M \otimes_{\ZpG} \Zp[\psi] \lra A^t(F)_p \otimes_{\ZpG} \Zp[\psi]
\]
is an isomorphism of $\Zp[\psi]$-modules. It is easy to see that $\Phi \otimes \Zp[\psi]$ is represented by $\psi\left( \Phi_{t_\psi}\right)$.
\end{proof}

We now proceed to give the proof of Theorem \ref{max theorem}. 

The equivalence of (i) and (iv) follows directly upon combining Lemmas \ref{maxunits} and \ref{unit lemma}.

Furthermore it is straightforward to compute the valuation of each element $\delta_\psi$. One has $v_{\frp_\psi}(\delta_\psi) = b_\psi$ with
$b_\psi$ defined as in Theorem \ref{max theorem}, and hence (iii) and (iv) are clearly equivalent.

In order to prove the equivalence of (i) and (ii), we will use (a special case of) a general fact which we now describe. If $H$ is any subgroup
of $G$, we write $\rho^G_H \colon K_0(\ZpG, \QpG) \lra  K_0(\Zp[H], \Qp[H])$ for the natural
restriction map and $q^H_0 \colon K_0(\Zp[H], \Qp[H]) \lra  K_0(\Zp, \Qp)$ for the
natural map induced by sending an element $[P,\phi,Q]$ of $K_0(\bz_p[H],\bq_p[H])$ to the element $[P^H,\phi^H,Q^H]$ of $K_0(\bz_p,\bq_p)$.
By \cite[Thm. 4.1]{whitehead} one then has that
\mpar{intersection}
\begin{equation}\label{intersection}
K_0(\bz_p[G],\bq_p[G])_{{\rm tor}}=\bigcap_{H\leq G}{\rm ker}(q_0^H\circ\rho_H^G).
\end{equation}

The functoriality properties of the element $T\Omega\bigl(h^1(A_{F})(1),\ZZ[G]\bigr)$ with respect to the maps $\rho_H^G$ and $q_0^H$ proved in
\cite[Prop.~4.1]{bufl01} then imply that, for any subgroup $H$ of $G$, 
\[ (q_0^H\circ\rho_H^G)\bigl(j_{G,*}\bigl(T\Omega\bigl(h^1(A_{F})(1),\ZZ[G]\bigr)\bigr)\bigr)= j_{0,*}\bigl(T\Omega\bigl(h^1(A_{F^H})(1),\ZZ\bigr)\bigr),\]
and so Lemma \ref{maxtorsion} combines with (\ref{intersection}) to imply that C$_p(A,\calM)$ holds if and only if, for every intermediate field $L$
of $F/k$, the element $j_{G_{L/L},*}\bigl(T\Omega\bigl(h^1(A_{L})(1),\ZZ\bigr)\bigr)$ vanishes, that is, if and only if the $p$-part of the eTNC holds for the pair
$\bigl(h^1(A_{L})(1),\ZZ\bigr)$. Noting that it is easy to check that the set of data $(A/L,L/L,p)$ satisfies all the hypotheses of Theorem \ref{ZpG theorem}
for any such field $L$ (see for instance \cite[Lem. 3.4]{bmwselmer} for a proof of a more general assertion), all that
is left to do in order to prove the equivalence of (i) and (ii) is to apply Theorem \ref{ZpG theorem}. Indeed,
any choice of $\bz$-bases $\{Q_i\}$ and $\{R_j\}$ of $A(L)$ and $A^t(L)$ respectively satisfy condition (\ref{global points}) for the set of data $(A/L,L/L,p)$,
while an explicit computation proves that $\frac{\tau^*(\Qu,  \ind_L^\Qu({\bf{1}}_{G_{L/L}})) }{w_\infty(L)}=\sqrt{|d_L|}$.

\mpar{Proof of Cor 1}
\subsection{The proof of Corollary \ref{Cor 1}}\label{Proof of Cor 1}

For brevity we set
\[
\lambda_\psi := \lambda_\psi(\calP, \calP^t), \quad \epsilon_\psi := \epsilon_\psi(\Phi), \quad u := \sum_{\psi \in \widehat{G}}
\frac{\calL^*_\psi}{\lambda_\psi \epsilon_\psi \delta_\psi} e_\psi.
\]
By Theorem \ref{ZpG theorem} the validity of $C_p(A, \ZG)$ is equivalent to the containment $u \in \ZpG^\times$, which we assume holds throughout the proof.
We also let $\varepsilon \colon \ZpG \lra \Zp$ denote the augmentation map.

We begin by noting that claim (ii) is just the $\psi = \trivchar$ special case of Lemma \ref{unit lemma}, and proceed now to deduce claim (iii) from it.
One clearly has that 
$\calL_\trivchar^* / ( \lambda_\trivchar \epsilon_\trivchar \delta_\trivchar) =\varepsilon(u)\in \Ze_p^\times$ with $\delta_\trivchar$ equal by
definition to 1, while a straightforward computation shows that $\frac{\tau^*(\Qu,  \ind_k^\Qu({\bf{1}}_{G})) }{w_\infty(k)}=(-1)^{|S_r|}\sqrt{|d_k|}$.
Claim (ii) therefore indeed implies that \begin{equation}\label{vtoaugofu}v=\calL_\trivchar^* / \lambda_\trivchar=\varepsilon(u)\cdot\epsilon_\trivchar=\varepsilon(u)\cdot\epsilon\end{equation}
belongs to $\Ze_p^\times$, as required.

In order to prove the remaining claims, we first note that, if ${\rm rk}(A(F^J))=|G/J|{\rm rk}(A(k))$ for every subgroup J of $G$, then $h=0$ by
Proposition \ref{MW iso} while $\Phi$ can be chosen to be the identity matrix by property (P2) and each element $\delta_\psi$ is simply equal to
1 by convention. In any such case, claim (i) therefore reduces to the trivial statement $\calL=u\in\bz_p[G]$ while claim (iv) simply reads
$u \equiv  v (\mod I_{G, p})$ and follows directly from (\ref{vtoaugofu}). We therefore may and will henceforth assume that
the inequality ${\rm rk}(A(F^J))\leq |G/J|{\rm rk}(A(k))$ of Corollary \ref{ranks} is strict for some subgroup $J$ of $G$. We recall that $H=H_{t_0}$ denotes
the smallest non-trivial subgroup of $G$ with the property that $m_H \neq 0$.

In order to prove claim (i), we note first that for each $\psi \in \widehat{G}$ we have
\[
\psi |_H \ne 1 \iff \ker(\psi) \sseq H_{t_0} \text{ and } \ker(\psi) \ne H_{t_0} \iff t_\psi > t_0.
\]
From the definitions of $\epsilon_\psi$ and $\delta_\psi$ we immediately deduce that, for each $\psi \in \widehat{G}$
such that $\psi |_H \ne 1$,
\[
\epsilon_\psi = 1, \quad \delta_\psi = \delta := \prod_{j=0}^{t_0} \left( \sigma^{p^j} - 1 \right)^{m_j}.
\]
Since $\delta e_\psi = 0$ for each $\psi$  such that $\psi |_H = 1$ we deduce that $\calL = \delta u \in \delta \ZpG\subseteq I_{G,p}^h$, as required.

Finally, claim (iv) follows from (\ref{vtoaugofu}) because $u$ is clearly congruent to $\varepsilon(u)=v/\epsilon$ modulo $I_{G,p}$ and therefore 
$\calL = \delta u$ is congruent to $\delta\frac{v}{\epsilon}$ modulo $I_{G,p}^{h+1}$, as required.

\mpar{Proof of Ann theorem}
\subsection{The proof of Theorem \ref{Ann theorem}}\label{Proof of ann theorem}

We begin by defining a (free) $\bz_p[G]$-submodule
\[
P := \bigoplus_{j \in [m_n]} \ZpG P_{(n,j)}^*
\]
of $A(F)_p^*$ and then fix, as we may, an injective lift $\kappa \colon P \lra \Sel_p(A_F)^\vee$ of the inclusion
$P \sseq A(F)_p^*$ through the canonical projection of (\ref{Sel ses}).
We also fix, as we may, a representative of the perfect complex $\Cf$ of the form $C^1\to C^2$ in which both $C^1$ and $C^2$ are finitely generated,
cohomologically-trivial $\bz_p[G]$-modules.
We then obtain a commutative diagram with exact
rows and columns of the form \begin{equation}\minCDarrowwidth0.9em\begin{CD}\label{quotientingsha}
  @. 0 @. 0 @. 0 @. 0 @.  \\
 @. @V VV @V VV @V VV @V VV @. \\
0 @> >> \bigoplus_{j} \Zp[G] P_{(n,j)}^t @=
\bigoplus_{j} \Zp[G] P_{(n,j)}^t @> 0 >>
{\rm im}(\kappa) @= {\rm im}(\kappa) @> >> 0\\
 @. @V VV @V VV @V  VV @V VV @. \\
0 @> >> A^t(F)_p @> >> C^1 @>
 >>
C^2 @> >> {\rm Sel}_p(A_F)^\vee @> >> 0\\
 @. @V VV @V VV @V VV @V VV @. \\
0 @> >> N @> >>
D^1 @> >>
D^2 @> >> {\rm cok}(\kappa) @>  >> 0\\
 @. @V VV @V VV @V VV @V VV @. \\
  @. 0 @. 0 @. 0 @. 0 @.
\end{CD}\end{equation}
in which we have set
\[
N:=\bigoplus_{t<n,j\in[m_t]} \Zp[G/H_t]
\]
and we simply define the arrow ${\rm im}(\kappa)\to C^2$, as we may since ${\rm im}(\kappa)$ is a free $\bz_p[G]$-module, by the commutativity of the upper right square.

The $\ZpG$-modules $D^1$ and $D^2$ are finitely generated and cohomologically-trivial,
and hence the central arrow of the bottom row of this diagram defines an object $D^\bullet$ of $D^p(\bz_p[G])$ which is acyclic outside of
degrees 1 and 2 and has identifications of $H^1(D^\bullet)$ with $N$ and of $H^2(D^\bullet)$ with
${\rm cok}(\kappa)$. We analogously define an object $B^\bullet$ of $D^p(\bz_p[G])$ represented by 
the perfect complex of $\bz_p[G]$-modules
\[\bigoplus_{j} \ZpG P_{(n,j)}^t\stackrel{0}{\lra}{\rm im}(\kappa).\]

Following \cite[Sec.~2.1.4]{omac} we next define an idempotent $e_N:=\sum_{\psi\in\Upsilon_N}e_\psi$ in $\bq_p[G]$ by letting $\Upsilon_N$ be the subset
of $\widehat{G}$ comprising characters $\psi$ with the property that $e_\psi(\Cp \tensorZp N)=0$. For any object
$C^\bullet$ of $D^p(\bz_p[G])$ we then obtain an object $e_N C^\bullet:=e_N\bz_p[G]\otimes_{\bz_p[G]}^{\mathbb{L}}C^\bullet$ of $D^p(e_N\bz_p[G])$. In particular, the exact triangle represented by diagram
(\ref{quotientingsha}) induces an exact triangle in $D^p(e_N\bz_p[G])$ of the form
\begin{equation}\label{Ntriangle}e_NB^\bullet\lra e_N\Cf\lra e_ND^\bullet\lra e_NB^\bullet[1].\end{equation} But $e_ND^\bullet\otimes_{e_N\bz_p[G]}e_N\bc_p[G]$ is
acyclic and an immediate application of the additivity criterion of \cite[Cor.~6.6]{additivity} to triangle (\ref{Ntriangle}) implies that one has
\begin{align}\label{Nadditivity}-\chi_{e_N\bz_p[G],e_N\bc_p[G]}(e_ND^\bullet,0)=&-\chi_{e_N\bz_p[G],e_N\bc_p[G]}(e_N\Cf,e_N(\lambda^{\NT,j}_{A,F})^{-1})\notag\\
&+\chi_{e_N\bz_p[G],e_N\bc_p[G]}(e_N B^\bullet,\lambda')\end{align} where $\lambda'$ denotes the canonical isomorphism \begin{align*}e_N(\bc_p\otimes_{\bz_p}{\rm im}(\kappa))
=&e_N(\bc_p\otimes_{\bz_p}\Sel_p(A_F)^\vee)\\ \stackrel{e_N(\lambda^{\NT,j}_{A,F})^{-1}}{\lra}&e_N(\bc_p\otimes_{\bz_p}A^t(F)_p)=e_N(\bc_p\otimes_{\bz_p}\bigoplus_{j} \ZpG P_{(n,j)}^t).\end{align*}
If we now write $\varphi:\bigoplus_{j} \ZpG P_{(n,j)}^t\to\bigoplus_{j} \ZpG P_{(n,j)}^*$ for the canonical isomorphism that maps an element $P_{(n,j)}^t$ to the element
$P_{(n,j)}^*$, then one finds that \begin{align}\label{Nregulator}\chi_{e_N\bz_p[G],e_N\bc_p[G]}(e_N B^\bullet,\lambda')=&\delta_{e_N\bz_p[G],e_N\bc_p[G]}
({\rm det}_{e_N\bc_p[G]}(\lambda'\circ e_N(\bc_p\otimes_{\bz_p}(\kappa\circ\varphi))))\notag\\ =&-\delta_{e_N\bz_p[G],e_N\bc_p[G]}(\sum_{\psi \in \Upsilon_N}  j(\lambda_\psi(\calP, \calP^t))   e_\psi),\end{align}
where the last equality follows from Lemma \ref{eval lambda NT}.

The assumed validity of $C_p(A, \ZG)$ therefore combines via (\ref{reform eq}) with equalities (\ref{Nadditivity}) and (\ref{Nregulator}) to imply that, in the
terminology of \cite[\S 2.3.2]{omac}, the element \[j_*(\sum_{\psi \in \Upsilon_N}   \calL^*_\psi    \lambda_\psi(\calP, \calP^t)^{-1}   e_\psi)=j_*(\calL)\] of
$e_N\bc_p[G]$ is a characteristic element for $e_ND^\bullet$. The result \cite[Lem. 2.6]{omac} therefore implies that there exists a characteristic element $\calL'$ for
$D^\bullet$ in $\bc_p[G]$ with the property that $e_N\calL'=j_*(\calL)$. Since $D^\bullet$ is clearly an admissible complex
of $\bz_p[G]$-modules (in the terminology of \cite[\S 2.1.1]{omac}), the results of \cite[Cor.~3.3]{omac} therefore imply
that the element $j_*(\calL)$ belongs to the ideal $I_{G,p}^{\tilde h}$ of $\bz_p[G]$, with $\tilde h := \dim_\Qp (\Qp \tensorZp \cok(\kappa)_G)$, and furthermore
generates ${\rm Fitt}_{\ZpG}(\cok(\kappa))$. To proceed with the proof, we first note that
\[\tilde h = \dim_\Qp (\Qp \tensorZp \cok(\kappa)_G)=\dim_\Qp (\Qp \tensorZp \bigoplus_{t<n}\bz_p[G/H_t]_G^{<m_t>})=\sum_{t<n}m_t=h.\]
We have hence proved that $j_*(\calL)$ belongs to $I_{G,p}^h$, as stated in claim (i) of Theorem \ref{Ann theorem}. Furthermore, $\Pi:={\rm im}(\kappa)$ is clearly a
finitely generated, free $\bz_p[G]$-submodule of ${\rm Sel}_p(A_F)^\vee$ of maximal rank $m_n$, and so the fact that the element $j_*(\mathcal{L})$
generates ${\rm Fitt}_{\bz_p[G]}({\rm cok}(\kappa))$ proves claim (iii) of Theorem \ref{Ann theorem}. To complete the proof, it is enough to note that,
since ${\rm im}(\kappa)$ is torsion-free,
the canonical composite homomorphism
\[
\Sha_p(A_F)^\vee\stackrel{\sim}\to({\rm Sel}_p(A_F)^\vee)_{{\rm tor}}\subseteq{\rm Sel}_p(A_F)^\vee\to{\rm cok}(\kappa)
\]
is injective and hence that one has that
\[
{\rm Fitt}_{\bz_p[G]}({\rm cok}(\kappa))\subseteq{\rm Ann}_{\bz_p[G]}(\Sha_p(A_F)^\vee).
\]
Recalling finally that the Cassels-Tate pairing induces a canonical isomorphism between
$\Sha_p(A_F)^\vee$ and $\Sha_p(A_F^t)$ completes the proof of claim (ii) and thus of Theorem \ref{Ann theorem}.

\mpar{examples}
\section{Examples}\label{examples}

In this section we gather some evidence, mostly numerical, in support of conjecture C$_p(A, \ZG)$.
Our aim is to verify statements that would not follow in an straightforward manner from the validity of the Birch and Swinnerton-Dyer conjecture for all intermediate fields
of $F/k$. Because of the equivalence of statements (i) and (ii) in Theorem \ref{max theorem} we therefore choose not to focus on presenting evidence for conjecture
C$_p(A, \calM)$ (although we also used our MAGMA programs to produce numerical evidence
for C$_p(A, \calM)$ by verifying statement (iii) of Theorem \ref{max theorem}). 

\smallskip
Throughout this section $A$ will always denote an elliptic curve.

\subsection{Verifications of conjecture C$_p(A, \ZG)$}

For the verification of C$_p(A, \ZG)$ using Theorem \ref{ZpG theorem} it is necessary to have explicit knowledge
of a map $\Phi$ that represents the extension class $\delta_{A,F,p}$. Whenever $A(F)_p$ is not projective as a $\ZpG$-module
we are currently not able to numerically compute $\Phi$, so we only deal with examples in which $A(F)_p$ is
projective. To the best of our knowledge there are currently three instances of theoretical evidence (in situations in which our fixed cyclic extension $F/k$ is not trivial):

\begin{itemize}
\item In \cite{bleythree}, it is shown that for each elliptic curve $A/\Qu$ with $L(A/\Qu, 1) \ne 0$ there are infinitely many
primes $p$ and for each such prime $p$ infinitely many (cyclic) $p$-extensions $F/\Qu$ such that C$_p(A, \Ze[\Gal(F/\bq)])$ holds.
All of these examples satisfy our hypotheses and are such that $A(F)_p$ vanishes.

\item In \cite[Th.~1.1]{bmw}, C$_p(A, \Ze[\Gal(F/\bq)])$ is proved for certain elliptic curves $A/\Qu$,
where $F$ denotes the Hilbert $p$-classfield of an imaginary quadratic field $k$. This result combines with the functoriality properties of the eTNC to
imply the validity of C$_p(A, \Ze[\Gal(F/k)])$. In these examples one has that $A(F)_p$ is a free $\Zp[\Gal(F/k)]$-module of rank one.

\item In \cite[Cor.~6.2]{bmw}, certain $S_3$-extensions $F/K$ are considered. Let $k$ and $L$ denote the quadratic and cubic
subfield of $F/K$ respectively. Under certain additional assumptions it is then shown that the validity of the Birch and Swinnerton-Dyer conjecture for $A$ over the fields $k, K$ and $L$ implies
the validity of C$_p(A, \Ze[\Gal(F/K)])$. Again by functoriality arguments, the validity of
C$_p(A, \Ze[\Gal(F/k)])$ follows. We note that the assumptions are such that one again has that $A(F)_p$ is a free $\Zp[\Gal(F/k)]$-module of rank one.
\end{itemize}

In the rest of this section we are concerned with numerical evidence. In \cite[Sec.~6]{bleyone} there is a list of examples
of elliptic curves $A/\Qu$ and dihedral extensions $F/\Qu$ of order $2p$ for which C$_p(A, \Ze[\Gal(F/\Qu)])$ is numerically verified.
Here the quadratic subfield $k$ is real and $A(F)_p$ vanishes. Again by functoriality arguments we obtain examples where C$_p(A, \Zp[\Gal(F/k)])$ is numerically
verified. There are two further analogous numerical verifications in dihedral examples in \cite[Sec.~6.3]{bmw}, one of degree $10$ and one of 
degree $14$, both of them with the property that $A(F)_p$ is a free $\Zp[\Gal(F/k)]$-module of rank one.

In the following we fix an odd prime $p$ and let $q$ denote a prime such that $q \equiv 1 (\mod p)$. We let $F$ denote
the unique subfield of $\Qu(\zeta_q)/\Qu$ of degree $p$ and take $k$ to be $\Qu$. 
For $p \in \{3,5,7\}$ and $q < 50$ we went through the list of semistable elliptic curves of rank one and conductor $N < 200$ and
checked numerically whether $L(A/\Qu, \chi, 1)=0$ and  $L'(A/\Qu, \chi, 1) \ne 0$ for a non-trivial character $\chi$ of $G$, and in addition,
whether our hypotheses are satisfied.
This resulted in a list of $50$ examples ($27$ for $p=3$, $20$ for $p=5$ and $3$ for $p=7$). In each of these examples we could find
a point $R$ such that $A(F)_p = \ZpG R$ and numerically verify conjecture $C_p(A, \ZG)$. 

We now describe in detail an example with $[F:\Qu] = 7$.
Let $A$ be the elliptic curve
\[
A \colon y^2 + xy + y = x^3 + x^2 - 2 x.
\]
This is the curve 79a1 in Cremona's notation. It is known that $A(\Qu)$ is free of rank one generated by
$P_1 = (0,0)$ and that $\Sha(A_\Qu) = 0$. Moreover it satisfies the hypotheses used throughout the paper.

We take $p = 7$ and let $F$ be the unique subfield of $\Qu(\zeta_{29})$ of degree $7$. Explicitly,
$F$ is the splitting field of
\[
f(x) =  x^7 + x^6 - 12 x^5 - 7 x^4 + 28 x^3 + 14 x^2 - 9 x + 1
\]
and we let $\alpha$ denote a root of $f$.
Using the MAGMA command {\it Points} it is easy to find a point $R$ of infinite order in $A(F) \setminus A(\Qu)$,
\begin{eqnarray*}
R =  && \left( \frac{1}{17} (31\alpha^6 + 23\alpha^5 - 373\alpha^4 - 135\alpha^3 + 814\alpha^2 + 372\alpha - 86),\right.\\
     && \left. \frac{1}{17} (-35\alpha^6 - 83\alpha^5 + 380\alpha^4 + 771\alpha^3 - 811\alpha^2 - 1321\alpha + 232)  \right).
\end{eqnarray*}
By Proposition \ref{MW iso} we know that $A(F)_p$ is a permutation module, hence $A(F)_p \simeq \Zp[G]$. Furthermore,
\cite[Prop. 3.1]{bmwselmer} now implies that $\Sha_p(A_F) = 0$. 

We set $Q_1 := \Tr_{F/\Qu}(R) = ( \frac{3}{4}, -\frac{3}{8}  )$ and easily verify that $Q_1 = -4 P_1$.
We checked numerically that $\ZpG R = A(F)_p$.
\mpar{CHECK !!}

Computing numerical approximations to the leading terms using  Dokchitser's MAGMA implementation of 
\cite{Dok} we obtain the following vector for  $\left( \calL^*_\chi / \lambda_\chi(\calP, \calP^t) \right)_{\chi \in \hat{G}}$
\begin{eqnarray*}
&&(-0.077586206896551724152  ,\\
&& -0.49999999999999999992 + 2.1906431337674115362 i,\\
&& -0.49999999999999999996 + 0.62698016883135191886 i,\\
&& -0.49999999999999999998 - 0.24078730940376432202 i, \\
&& -0.49999999999999999992 - 2.1906431337674115362 i, \\
&& -0.49999999999999999996 - 0.62698016883135191886 i,\\
&& -0.49999999999999999998 + 0.24078730940376432202 i )
\end{eqnarray*}
This is very close to
\begin{eqnarray*}
&&(-9/116,
\zeta_7^3 + \zeta_7^2 + \zeta_7,
-\zeta_7^5 - \zeta_7^4 - \zeta_7 - 1,
-\zeta_7^5 - \zeta_7^3 - \zeta_7 - 1,\\
&&-\zeta_7^3 - \zeta_7^2 - \zeta_7 - 1,
\zeta_7^5 + \zeta_7^4 + \zeta_7,
\zeta_7^5 + \zeta_7^3 + \zeta_7)
\end{eqnarray*}
It is now easy to verify the rationality conjecture C$(A, \QG)$ by the criterion of Theorem \ref{rat theorem}.
Moreover, the valuations of $-9/116$ and $\zeta_7^3 + \zeta_7^2 + \zeta_7$ at $\frp_\chi$
are $0$, so that by Theorem \ref{max theorem} we deduce the validity of C$_p(A, \calM)$.
Finally, one easily checks that $-9/116 \equiv \zeta_7^3 + \zeta_7^2 + \zeta_7 (\mod (1-\zeta_7))$, so that 
the element in (\ref{ZpG criterion}) is actually a unit in $\ZpG$, thus (numerically) proving C$_p(A, \ZG)$.

\bigskip

\subsection{Evidence in support of conjecture C$_p(A, \ZG)$}

In this subsection we collect evidence for statements that we have shown to follow from the validity of C$_p(A, \ZG)$ and focus on situations in which $A(F)_p$ is not 
$\ZpG$-projective. In particular, we aim to verify claim (i) of Theorem \ref{Ann theorem}. 
Since we can neither compute the module $\Sha_p(A_F)$ nor a map $\Phi$ as required, we are not able to verify any other claim of either Corollary \ref{Cor 1} or Theorem \ref{Ann theorem}.

Again we want to focus on evidence which goes beyond implications of the Birch and Swinnerton-Dyer conjecture for $A$ over all intermediate fields of $F/k$.
We assume the notation of  Theorem \ref{Ann theorem}, so in particular set $h = \sum_{t < n} m_t$. If $k=\Qu$ and $m_n = 0$, then 
the element $\calL$ is essentially the Mazur-Tate modular element (see \cite{mazurtate}) and the validity of the Birch and Swinnerton-Dyer conjecture would imply that
it belongs to $I_{G,p}$ in the type of situations under consideration. Hence, if $m_n = 0$, we only searched for examples where $h > 1$.

If $F/k$ is cyclic of order $p$, then $I_{G,p}^h = (\sigma  - 1)^h \calM_p$ for all $h \ge 1$. Letting $u$ and $\delta$ denote
the elements defined in the proof of Corollary \ref{Cor 1} we hence note that, if C$_p(A, \calM)$ is valid, then $u \in \calM^\times$ and the proof
of Corollary \ref{Cor 1} clearly shows that $\calL = \delta u$ is contained in $\delta \calM_p = I_{G, p}$.
We therefore further restricted our search for interesting examples to cases where $[F : k] = p^n$ with $n \ge 2$.

Restricted by the complexity of the computations and the above considerations 
we are therefore lead to consider the following types of examples:
\begin{itemize}
\item[(i)] $A(F)_p \simeq  \ZZ_p^{m_0} \oplus \Zp[G/H_1]^{m_1} \oplus \ZpG^{m_2}, \quad [F : \Qu] = 3^2,$ \\
$(m_0, m_1, m_2) = (1,1,0)$,
\item[(ii)] $A(F)_p \simeq  \ZZ_p^{m_0} \oplus \Zp[G/H_1]^{m_1} \oplus \ZpG^{m_2}, \quad [F : \Qu] = 3^2,$ 
\\$(m_0, m_1, m_2) = (m_0,0,0), \quad m_0 \ge 2$.
\end{itemize}

We note that, whenever $\Sha_p(A_F)$ is trivial, the validity of C$_p(A, \ZG)$ 
implies via Corollary \ref{Cor 1} that $h$ is the exact order of vanishing, i.e., that $\calL \in I_{G,p}^h \setminus  I_{G,p}^{h+1}$.
However, this need not be true if $\Sha_p(A_F)$ is non-trivial. In such cases, by Theorem \ref{Ann theorem} (iii), $C_p(A, \ZG)$ does predict
that $\calL$ generates the Fitting ideal of $\Sel_p(A_F)^\vee$ since $m_n = 0$ immediately implies $\Pi = 0$. 

Let $q$ denote a prime such that $q \equiv 1 (\mod 3^2)$. We let $F$ denote
the unique subfield of $\Qu(\zeta_q)/\Qu$ of degree $9$ and take $k$ to be $\Qu$. 

We checked two examples of type (i), namely those given by the pairs $(A, q) \in \{ (681c1, 19), (1070a1, 19) \}$. In both cases we were able
to find a points $P_0$ and $P_1$  such that $A(F)_p = \Zp P_0 \oplus \Zp[G/H] P_1$, where $H$ denotes the subgroup of order $3$. Each time we numerically
found that $\Sha_p(A_F) = 0$ (predicted by the Birch and Swinnerton-Dyer conjecture for $A$ over $F$) and verified that $h$ is the precise order of vanishing, as predicted
by Corollary \ref{Cor 1}.

Concerning examples of type (ii) went through the list of semistable elliptic curves of rank $2$ and conductor $N < 750$ and
produced by numerically checking $L$-values and derivatives a list of 12 examples satisfying the necessary hypotheses. In each of these examples we had $h=m_0=2$ and
could numerically verify the containment $\calL \in I_{G,p}^2$. Whenever $\Sha_p(A_F)$ was trivial we also checked that $\calL \not\in I_{G,p}^3$.

We finally present one example in detail.
Let $A$ be the elliptic curve
\[
A \colon y^2 + y = x^3 + x^2 - 2x.
\]
This is the curve 389a1 in Cremona's notation. It is known that $A(\Qu)$ is free of rank two generated by
$P_1 =  (0,0)$ and $P_2 = (-1, 1)$ and that $\Sha(A_\Qu) = 0$. Moreover it satisfies the hypotheses required to apply Theorem \ref{Ann theorem} (see Remark
\ref{shaorpermutation}).

Computing numerical approximations to the leading terms we find that the order of vanishing at each non-trivial character is $0$. 
The rank part of the Birch and Swinnerton-Dyer conjecture therefore predicts that $\rk(A(F)) = 2$.
We checked that $\langle P_1, P_2 \rangle_\Ze$ is $3$-saturated in $A(F)$ and therefore (conjecturally) conclude that
$A(F)_p = \langle P_1, P_2 \rangle_\Zp \simeq \Ze_p^2$.

The Birch and Swinnerton-Dyer conjecture predicts that $|\Sha_p(A_F)| = 81$. We therefore cannot test for the precise
order of vanishing. 

Computing leading terms, periods and regulators we find the following numerical approximations to  
$\left( \calL^*_\chi / \lambda_\chi(\calP, \calP^t) \right)_{\chi \in \hat{G}}$
\begin{eqnarray*}
&& (-1.243243 ,
    1.500000 + 2.598076 i,
    1.500000 - 2.598076 i,\\
&&  0.358440 + 2.032818 i,
    0.286988 - 0.104455 i,
   -3.645429 + 3.058878 i,\\
&&  0.358440 - 2.032818 i, 
    0.286988 + 0.104455 i, 
   -3.645429 - 3.058878 i). 
\end{eqnarray*}
The actual computation was done with a precision of $30$ decimal digits.

This is very close to
\begin{eqnarray*}
&& (-46/37, \quad
    3\zeta_3 + 3, \quad
    -3\zeta_3, \\ 
&&  2\zeta_9^3 - \zeta_9^2 + 2\zeta_9, \quad
    -\zeta_9^4 - 2\zeta_9^3 + 2\zeta_9^2 - 2, \quad
   \zeta_9^5 + 2\zeta_9^4 + 2\zeta_9^3 + \zeta_9^2, \\
&&  -2\zeta_9^5 + \zeta_9^4 - 2\zeta_9^3 - 2\zeta_9^2 + \zeta_9 - 2, \quad
   -\zeta_9^5 - 2\zeta_9^4 + 2\zeta_9^3 - 2\zeta_9,\quad
   2\zeta_9^5 - 2\zeta_9^3 - \zeta_9 - 2 ).
\end{eqnarray*}

It is now easy to verify the rationality conjecture C$(A, \QG)$ by the criterion of Theorem \ref{rat theorem}.
We finally find that 
\[
\calL = -\sigma + 2\sigma2 - \sigma^3 + 2\sigma^5 -2\sigma^6  -2\sigma^7 + 2\sigma^8
\]
and easily check that $\calL \in I_{G, p}^2$.



\noindent
Werner Bley,
Mathematisches Institut der Universit\"at M\"unchen,
Theresienstr.~39,
80333 M\"unchen,
Germany,
E-mail: bley@math.lmu.de

\smallskip

\noindent
Daniel Macias Castillo,
Instituto de Ciencias Matem\'aticas (ICMAT),
Madrid 28049,
Spain,
E-mail: daniel.macias@icmat.es

\end{document}